\def\@tocline#1#2#3#4#5#6#7{\relax
  \ifnum #1>\c@tocdepth % then omit
  \else
    \par \addpenalty\@secpenalty\addvspace{#2}%
    \begingroup \hyphenpenalty\@M
    \@ifempty{#4}{%
      \@tempdima\csname r@tocindent\number#1\endcsname\relax
    }{%
      \@tempdima#4\relax
    }%
    \parindent\z@ \leftskip#3\relax \advance\leftskip\@tempdima\relax
    \rightskip\@pnumwidth plus4em \parfillskip-\@pnumwidth
    #5\leavevmode\hskip-\@tempdima
      \ifcase #1
       \or\or \hskip 1em \or \hskip 2em \else \hskip 3em \fi%
      #6\nobreak\relax
    \dotfill\hbox to\@pnumwidth{\@tocpagenum{#7}}\par
    \nobreak
    \endgroup
  \fi}
\definecolor{light-gray}{gray}{0.85}
\newtheorem{theorem}{Theorem}[section]
\newtheorem{family}{}
\newtheorem{proposition}[theorem]{Proposition}
\newtheorem{prop-defn}[theorem]{Proposition-Definition}
\newtheorem{conjecture}[theorem]{Conjecture}
\theoremstyle{definition}
\newtheorem{definition}[theorem]{Definition}
\newtheorem{remark}[theorem]{Remark}
\newtheorem{notation}[theorem]{Notation}
\newtheorem{example}[theorem]{Example}
\newcommand\cC{\mathcal{C}}
\newcommand\cD{\mathcal{D}}
\newcommand\cE{\mathcal{E}}
\newcommand\cF{\mathcal{F}}
\newcommand\cH{\mathcal{H}}
\newcommand{\cK}{\mathcal{K}}
\newcommand\cL{\mathcal{L}}
\newcommand\cO{\mathcal{O}}
\newcommand\cX{\mathcal{X}}
\newcommand\cY{\mathcal{Y}}
\renewcommand\AA{\mathbb{A}}
\newcommand\CC{\mathbb{C}}
\newcommand\GG{\mathbb{G}}
\newcommand\LL{\mathbb{L}}
\newcommand\NN{\mathbb{N}}
\newcommand\PP{\mathbb{P}}
\newcommand\QQ{\mathbb{Q}}
\newcommand\RR{\mathbb{R}}
\newcommand\ZZ{\mathbb{Z}}
\newcommand\bfe{\mathbf{e}}
\newcommand\rA{\mathrm{A}}
\newcommand\rB{\mathrm{B}}
\newcommand\rC{\mathrm{C}}
\newcommand\rE{\mathrm{E}}
\newcommand\rG{\mathrm{G}}
\newcommand\rH{\mathrm{H}}
\newcommand\rI{\mathrm{I}}
\newcommand\rL{\mathrm{L}}
\newcommand\rN{\mathrm{N}}
\newcommand\rO{\mathrm{O}}
\newcommand\rR{\mathrm{R}}
\newcommand\rS{\mathrm{S}}
\newcommand\rT{\mathrm{T}}
\newcommand\rV{\mathrm{V}}
\newcommand\rW{\mathrm{W}}
\newcommand\rmd{\mathrm{d}}
\newcommand\rme{\mathrm{e}}
\newcommand\rmi{\mathrm{i}}
\newcommand\rml{\mathrm{l}}
\newcommand\rmm{\mathrm{m}}
\newcommand\rmo{\mathrm{o}}
\newcommand\rmp{\mathrm{p}}
\newcommand\rms{\mathrm{s}}
\newcommand\rmt{\mathrm{t}}
\newcommand\rmv{\mathrm{v}}
\newcommand{\frakg}{\mathfrak{g}}
\newcommand{\frakt}{\mathfrak{t}}
\newcommand{\phiv}{\varphi}
\newcommand{\epsi}{\varepsilon}
\newcommand{\inv}{^{-1}}
\newcommand{\al}{\alpha}
\newcommand{\into}{\hookrightarrow} %monomorfismo
\newcommand{\onto}{\twoheadrightarrow} %epimorfismo
\DeclareMathOperator{\Hom}{Hom} %spazio degli omomorfismi
\DeclareMathOperator{\Spec}{Spec} %spettro di un anello
\DeclareMathOperator{\Pic}{Pic} %gruppo di Picard
\DeclareMathOperator{\coker}{coker}
\renewcommand{\setminus}{\smallsetminus}
\newcommand{\git}{/ \! \! /} %git quotient
\newcommand{\bmu}{\boldsymbol{\mu}} %bold mu
\newcommand{\maximallymutable}{\mathbf{L}} %maximally mutable
\newcommand{\Tmaximallymutable}{\mathbf{L}^\mathrm{T}} %T-maximally mutable
\newcommand{\chenruan}{\mathrm{H}_{\mathrm{CR}}^\bullet} %chen ruan
\newcommand{\juniorchenruan}{\widetilde{\mathrm{H}}_\mathrm{CR}^{<2}} %junior chen ruan
\newcommand{\bfone}{\mathbf{1}} %bold one
\DeclareMathOperator{\BBox}{Box} %box di uno stack
\newcommand{\Nef}{\mathrm{Nef}} %nef cone
\newcommand{\NE}{\mathrm{NE}} %Mori cone
\def\ambientfactor#1#2{\frac{\prod\limits_{\substack{a \leq 0 \\ \langle a \rangle = \langle #1 \rangle}} (#2 + az)}{\prod\limits_{\substack{a \leq #1 \\ \langle a \rangle = \langle #1 \rangle}} (#2 + az)}} %ambient factor in the I-function with numerator
\def\ambientfactorwithoutnumerator#1#2{\frac{1}{\prod\limits_{\substack{0 < a \leq #1 \\ \langle a \rangle = \langle #1 \rangle}} (#2 + az)}} %ambient factor in the I-function without numerator
\def\bundlefactorwithoutdenominator#1#2{\prod\limits_{\substack{0 < a \leq #1 \\ \langle a \rangle = \langle #1 \rangle}} (#2 + az)}
\def\bundlefactorwithdenominator#1#2{\frac{\prod\limits_{\substack{a \leq #1 \\ \langle a \rangle = \langle #1 \rangle}} (#2 + az)}{\prod\limits_{\substack{a \leq 0 \\ \langle a \rangle = \langle #1 \rangle}} (#2 + az)}}
\title[Quantum periods of del Pezzo surfaces with $\frac{1}{3}(1, 1)$ singularities]{On the quantum periods of del Pezzo surfaces with $\frac{1}{3}(1, 1)$ singularities}
\author[A.~Oneto]{Alessandro Oneto}
\address{Department of Mathematics, Stockholm University, SE-106 91, Stockholm,  Sweden}
\email{oneto@math.su.se}
\author[A.~Petracci]{Andrea Petracci}
\address{Department of Mathematics, Imperial College London, 180 Queen's Gate, London SW7 2AZ, United Kingdom}
\email{a.petracci13@imperial.ac.uk}
\keywords{Gromov--Witten invariants, quantum cohomology, quantum period, del Pezzo surface} %do we really need this?
\subjclass{X} %do we really need this?
\begin{document}

\maketitle

\begin{abstract}

In earlier joint work with our collaborators Akhtar, Coates, Corti, Heuberger, Kasprzyk, Prince and Tveiten, we gave a conjectural classification of a broad class of orbifold del~Pezzo surfaces, using Mirror Symmetry.  We proposed that del~Pezzo surfaces $X$ with isolated cyclic quotient singularities such that $X$ admits a $\QQ$-Gorenstein toric degeneration correspond under Mirror Symmetry to maximally mutable Laurent polynomials $f$ in two variables, and that the quantum period of such a surface $X$, which is a generating function for Gromov--Witten invariants of $X$, coincides with the classical period of its mirror partner $f$.

In this paper, we prove a large part of this conjecture for del~Pezzo surfaces with $\frac{1}{3}(1,1)$ singularities, by computing many of the quantum periods involved. Our tools are the Quantum Lefschetz theorem and the Abelian/non-Abelian Correspondence; our main results are contingent on, and give strong evidence for, conjectural generalizations of these results to the orbifold setting.
\end{abstract}

\tableofcontents

\addtocontents{toc}{\protect\setcounter{tocdepth}{1}} %d'ora in poi nell'indice mostro le sezioni, ma non le sottosezioni

\section{Introduction} \label{sec:intro}

In \cite{conjectures}, together with our coauthors Akhtar, Coates, Corti, Heuberger, Kasprzyk, Prince and Tveiten, we gave a conjectural classification of a broad class of del~Pezzo surfaces with isolated cyclic quotient singularities, using Mirror Symmetry.  We proposed that del~Pezzo surfaces $X$ with isolated cyclic quotient singularities such that $X$ admits a $\QQ$-Gorenstein toric degeneration correspond under Mirror Symmetry to maximally mutable Laurent polynomials $f$ in two variables, and that the \emph{quantum period} of such a surface $X$ coincides with the \emph{classical period} of its mirror partner $f$.  The quantum period of $X$ here is a generating function for genus-zero Gromov--Witten invariants of $X$ that depends on certain natural parameters, which correspond to Reid's ``junior classes'' in the Chen--Ruan cohomology of $X$; on the other hand the Laurent polynomial $f$, being maximally mutable, also depends on certain parameters. We conjectured that the quantum period of $X$ and the classical period of $f$ coincide after an affine-linear identification of the parameter spaces involved.  This is Conjecture~B in~\cite{conjectures}; it is restated as Conjecture~\ref{conj:conj_B} below.

There are 26 families of del~Pezzo surfaces with $\frac{1}{3}(1,1)$ points that admit a $\QQ$-Gorenstein degeneration to a toric surface. In this paper, for 25 of these surfaces we compute the restriction of the quantum period to a nonempty affine subspace of the parameter space and we check that this matches with the classical period along an appropriate subspace of the space of maximally mutable Laurent polynomials.  When combined with \cite{singularity_content, minimal_polygons, alessio_liana, al_ketil}, this represents a substantial step towards Conjecture~B of \cite{conjectures}  for del~Pezzo surfaces with $\frac{1}{3}(1,1)$ points: it establishes a weaker form of Conjecture~B for 25 of the 26 surfaces involved. Our main results are stated in Theorem \ref{thm:main_result}.

Computing the quantum period of orbifolds is a hard problem in Gromov--Witten theory, and our computations are at the limit of the currently available techniques.  Our calculations depend on -- and provide strong evidence for -- natural conjectural generalisations of the Quantum Lefschetz theorem and the Abelian/non-Abelian Correspondence to the orbifold setting.  These are stated in \S\ref{sec:quantum_Lef_principle} and \S\ref{sec:setup_abelian} below.  When combined with toric mirror theorems, these generalizations allow the computation of quantum periods for many orbifolds that are either 
\begin{itemize}
\item[(a)] complete intersections in toric Deligne--Mumford stacks; or
\item[(b)] zero loci of regular sections of homogeneous vector bundles on Deligne--Mumford quotient stacks $[V \git G]$, where $V$ is a representation of a reductive group $G$.
\end{itemize}
In \cite{alessio_liana} Corti and Heuberger have given models of the form (a) or (b) for 25 of the 26 del~Pezzo surfaces with $\frac{1}{3}(1,1)$ points that admit a $\QQ$-Gorenstein degeneration to a toric surface, and we use these models to compute the quantum periods.  For the missing surface an Italian-style birational description is known, but no explicit description as a complete intersection in a toric stack or a reasonably good GIT quotient has yet been found. This gap prevents us from computing its quantum period. We hope to return to this case in a future paper.

\subsection*{Highlights} In \S\ref{sec:background}  we give an overview of the paper in more detail.  The impatient reader may wish, however, to skip ahead to the following sections.
\begin{itemize}[leftmargin=*]
\item In \S\ref{subsec:example_2} and \S\ref{subsec:example_3} we compute the quantum periods of two surfaces that are complete intersections in toric orbifolds; these are applications of our conjectural generalisation of the Quantum Lefschetz theorem from \S\ref{sec:quantum_Lef_principle}.
\item In \S\ref{subsec:sample_abelian_nonabelian} we compute the quantum period of a complete intersection in a weighted Grassmannian; this is an application of our conjectural generalisation of the Abelian/non-Abelian Correspondence from \S\ref{sec:setup_abelian}.
\item On page \pageref{subsec:X_4,1/3} we compute the quantum period of a del~Pezzo surface
  that does not admit a $\QQ$-Gorenstein degeneration to a toric surface.  This surface is a complete intersection in a toric orbifold, and as such has a Hori--Vafa mirror, but this mirror model does not admit a torus chart.
\end{itemize}

\subsection*{Plan of the paper}
In Section \ref{sec:background} we provide the basic notions on quantum periods of del Pezzo surfaces, present the conjectural picture for Mirror Symmetry for del Pezzo surfaces with isolated cyclic quotient singularities (Conjecture \ref{conj:conj_B}), and state our main result (Theorem \ref{thm:main_result}).
In Section \ref{sec:quantum period} we briefly recall Gromov--Witten theory for stacks (\S\ref{sec:GW_for_stacks}), Givental's symplectic formalism (\S\ref{sec:Givental_formalism}), and the definition of the quantum period of a Fano orbifold (\S\ref{sec:quantum_period_Fano_orbifold}).
In Section \ref{sec:toric} we focus on toric stacks: we recall the formalism of stacky fans (\S\ref{sec:stacky_fans}) and the toric mirror theorem (\S \ref{sec:mirror_theorem}); in \S\ref{subsec:example_1} we compute the quantum periods of the blow-up of $\PP(1,1,3)$ at one point.
In Section \ref{sec:toric_complete_intersections} we deal with complete intersections in toric stacks: in \S\ref{sec:quantum_Lef_principle} we give a brief survey of the Quantum Lefschetz theorem and we state a conjectural generalisation, in \S\ref{sec:Lefschetz_toric_orbifolds} we discuss how to apply Quantum Lefschetz to compute the quantum period of toric complete intersections, and we give two examples in \S\ref{subsec:example_2} and \S\ref{subsec:example_3}.
Section \S\ref{sec:abelian_nonabelian} is devoted to the Abelian/non-Abelian Correspondence for stacks: we state a conjecture in \S\ref{sec:setup_abelian} and present a sample computation in \S\ref{subsec:sample_abelian_nonabelian}.
The results of our calculations are collected in Section~\ref{sec:results}, and are summarized in Table~\ref{our_list} on page~\pageref{our_list}.

\subsection*{Conventions} We work over the field $\CC$ of complex numbers. 
A \emph{Fano variety} is a projective normal variety over $\CC$ such that the anticanonical divisor $-K_X$ is $\QQ$-Cartier and ample.  A \emph{del~Pezzo surface} is a Fano variety of dimension $2$. The \emph{Fano index} of a Fano variety $X$ is the largest positive integer $f$ such that the equality $-K_X = fH$ holds in the divisor class group $\mathrm{Cl}(X)$, for some Weil divisor $H$ on $X$.

Calligraphic letters, i.e. $\cX$ and $\cY$, denote separated Deligne--Mumford stacks of finite type over $\CC$ and roman letters, i.e. $X$ and $Y$, denote their coarse moduli spaces.

\subsection*{Acknowledgements} This project started during the PRAGMATIC 2013 Research School ``Topics in Higher Dimensional Algebraic Geometry'' held in Catania, Italy, in September 2013. We thank the organisers Alfio Ragusa, Francesco Russo, and Giuseppe Zappal\`a. We are very grateful to Alessio Corti for introducing us to this subject and supporting us during the last two years and to Tom Coates for countless invaluable comments and suggestions. We thank  Alexander Kasprzyk and Andrew Strangeway for many useful conversations. Our computations rely heavily on the use of the computer algebra software Magma \cite{magma}; we thank John Cannon and the Magma team at the University of Sydney for providing licences.

\addtocontents{toc}{\protect\setcounter{tocdepth}{2}} %d'ora in poi nell'indice mostro le sezioni e le sottosezioni

\section{Background} \label{sec:background}

\subsection{Quantum periods and del~Pezzo surfaces}

Let $X$ be a Fano variety with quotient singularities. The \emph{quantum period} of $X$ is a generating function $G_X$ for some genus-zero Gromov--Witten invariants of the unique well-formed orbifold $\cX$ having $X$ as coarse moduli space (see \S \ref{sec:quantum_period_Fano_orbifold}).
Let $\juniorchenruan(\cX)$ denote the subspace of the Chen--Ruan cohomology of $\cX$ generated by the identity classes of the twisted sectors with age $<1$. 
The quantum period is a family of power series parameterised by $\juniorchenruan(\cX)$, i.e.
\begin{equation*}
G_X \colon \juniorchenruan(\cX) \longrightarrow \QQ [ \! [ t ] \!].
\end{equation*}
For example, if $X$ has canonical singularities, then $\juniorchenruan(\cX) = \{ 0 \}$ and the quantum period $G_X$ is just a power series in $t$. We refer the reader to \S\ref{sec:quantum period} for the precise definitions of Chen--Ruan cohomology and quantum period.

If the quantum period of a Fano variety $X$ is
\begin{equation*}
G_X = \sum_{\delta \in \NN} c_\delta t^\delta
\end{equation*}
with $c_\delta \colon \juniorchenruan(\cX) \to \QQ$, then we define the \emph{regularised quantum period} of $X$ to be the power series 
\begin{equation*}
\widehat{G}_X = \sum_{\delta \in \NN} \delta ! c_\delta t^\delta.
\end{equation*}
The regularised quantum period plays an important r{\^o}le in Mirror Symmetry of Fano varieties \cite{mirror_symmetry_and_fano_manifolds, quantum_periods_3folds, conjectures}.

\bigskip

A \emph{del~Pezzo surface with isolated quotient singularities of type $\frac{1}{3}(1,1)$} (or, more briefly, a del~Pezzo surface with $\frac{1}{3}(1,1)$ singularities or points) is a normal projective surface $X$ over $\CC$ such that:
\begin{itemize}
\item the anticanonical divisor $-K_X$ is $\QQ$-Cartier and ample; and \item the complement of the smooth locus consists of finitely many points such that each of them has an analytic neighborhood isomorphic to an analytic neighborhood of the origin in the quotient $\CC^2 / \bmu_3$, where the cyclic group $\bmu_3$ acts on $\CC^2$ with weights $(1,1)$, i.e. the third root of unity $\zeta_3$ acts by mapping the point $(x,y) \in \CC^2$ into the point $(\zeta_3 x, \zeta_3 y)$.
\end{itemize}
Del~Pezzo surfaces with $\frac{1}{3}(1,1)$ points have recently been classified in \cite{fujita, alessio_liana}. There are 29 deformation families of such surfaces. Three of them have Fano index greater than 
1: the weighted projective plane $\PP(1,1,3)$ and two surfaces denoted by $B_{1,16/3}$ and $B_{2,8/3}$. The remaining 26 families have Fano index equal to 1 and they are denoted by $X_{k,d}$, 
where $k$ is the number of singular points and $d = K_X^2$ is the degree. For many of these surfaces, Corti and Heuberger exhibit explicit models, which are essential for our computations of the quantum periods.

In this paper we compute the quantum periods for 26 out of the 29 families with the following methods.
\begin{itemize}[leftmargin=0.5cm]
\item $6$ surfaces are toric. Using the mirror theorem for toric stacks (see \S\ref{sec:mirror_theorem}) we compute the full quantum periods, except on $X_{6,2}$ for which the large Picard rank increases the computational complexity and allows us to compute a specialisation of the quantum period only. An example is given in \S\ref{subsec:example_1}.
\item $19$ surfaces are complete intersections in toric orbifolds. Using a conjectural generalisation of the Quantum Lefschetz theorem (see \S\ref{sec:quantum_Lef_principle}), we compute the restriction of the quantum period to a non-empty affine subspace of $\juniorchenruan(\cX)$. In \S\ref{subsec:example_2} and \S\ref{subsec:example_3} we give two examples of these computations.
\item The surface $X_{1,7/3}$ is described as a complete intersection inside a weighted Grassmannian. In \S\ref{subsec:sample_abelian_nonabelian}, combining conjectural generalisations of the Quantum Lefschetz theorem (see \S \ref{sec:quantum_Lef_principle}) and the Abelian/non-Abelian Correspondence (see \S \ref{sec:setup_abelian}),  we compute a restriction of the quantum period to a non-empty affine subspace of $\juniorchenruan(\cX)$.
\item For the surfaces $X_{5, 2/3}$, $X_{5, 5/3}$, $X_{6,1}$, since we do not know any useful model for computations in Gromov--Witten theory, we have not been able to compute any restriction of the quantum period.
\end{itemize}
Although our computations rest on conjectural generalisations of the Quantum Lefschetz theorem and of the Abelian/non-Abelian Correspondence, we are confident that the results of our computations are correct because, even though partial, they match perfectly with the framework of Mirror Symmetry for orbifold del~Pezzo surfaces, as formulated in \cite{conjectures}. Our complete results are reported in \S\ref{sec:results}.

\subsection{Mirror Symmetry for orbifold del~Pezzo surfaces}

In \cite{conjectures} the authors state a conjecture which, roughly speaking, predicts that the regularised quantum period of an orbifold del~Pezzo surface $X$ with $\QQ$-Gorenstein rigid cyclic singularities coincides with the classical period of a certain family of `special' Laurent polynomials supported on the polygon corresponding to some toric degeneration of $X$. Since $\frac{1}{3}(1,1)$ is the simplest non-trivial example of $\QQ$-Gorenstein rigid surface singularity, our computations provide some evidence for this Mirror Symmetry type conjecture, which we now explain.

A \emph{Fano polygon} in a rank-$2$ lattice $N$ is a convex polytope $P \subseteq N_\RR$ such that the origin is in the strict topological interior of $P$ and the vertices of $P$ are primitive lattice vectors. From a Fano polygon $P$ one may construct a toric del~Pezzo surface $X_P$ from the face fan of $P$. On the set of Fano polygons there is an equivalence relation called \emph{mutation} \cite{mutations}. In \cite[Conjecture A]{conjectures} it is conjectured  that mutation equivalence classes of Fano polygons are in a one-to-one correspondence with del~Pezzo surfaces that admit a $\QQ$-Gorenstein degeneration\footnote{We refer the reader to \cite{kollar_shepherd_barron, hacking_compact, conjectures} for the notion of $\QQ$-Gorenstein deformation of surfaces.} to a toric surface and have isolated cyclic quotient singularities that are rigid with respect to $\QQ$-Gorenstein deformations. To a Fano polygon $P$ this correspondence associates a generic $\QQ$-Gorenstein deformation of the toric surface $X_P$.

 The \emph{maximally mutable Laurent polynomials} (see \cite{al_ketil}) of a Fano polygon $P$  are the Laurent polynomials $f \in \QQ [ N ]$ such that the Newton polygon of $f$ is $P$ and they stay Laurent after every mutation of $P$ and the corresponding operation on $f$. Let $\maximallymutable(P)$ be the affine space of maximally mutable Laurent polynomials of $P$ and let $\Tmaximallymutable(P) \subseteq \maximallymutable(P)$ be the affine subspace made up of those with T-binomial edge coefficients (see \cite{al_ketil, conjectures} for definitions).

If $f \in \QQ[ x^{\pm 1}, y^{\pm 1}]$ is a Laurent polynomial, then the \emph{classical period} of $f$ is the power series
\begin{align*}
\pi_f (t) &= \left( \frac{1}{2\pi {\rmi}} \right)^2 \int_{|x | = | y | =1}\frac{1}{1-tf(x,y)} \frac{\rmd x}{x} \wedge \frac{\rmd y}{y}  \\ 
&= \sum_{\delta \in \NN} \mathrm{coeff}_1 (f^\delta) t^\delta \in \QQ [ \! [ t ] \! ],
\end{align*}
where $\mathrm{coeff}_1(f^\delta)$ is the coefficient of $1$ in the Laurent polynomial $f^\delta$.

Now we will consider the following setup:

\begin{itemize}
\item[($\star$)] $P$ is a Fano polygon; $X_P$ is the toric del~Pezzo surface corresponding to the face-fan of $P$; $X$ is a generic $\QQ$-Gorenstein deformation of $X_P$; $\cX$ is the unique well-formed orbifold (see \S \ref{sec:quantum_period_Fano_orbifold} for details) such that its coarse moduli space is $X$.
\end{itemize}

\begin{conjecture}[Conjecture~B in \cite{conjectures}] \label{conj:conj_B} 
Let $P, X, \cX$ be as in $(\star)$.
Then there exists an affine-linear isomorphism $\Phi \colon \Tmaximallymutable (P) \longrightarrow \juniorchenruan(\cX)$ such that
\begin{equation*}
\forall f \in \Tmaximallymutable(P), \qquad \widehat{G}_X ( \Phi(f); t) = \pi_f(t),
\end{equation*}
where $\widehat{G}_X$ is the regularised quantum period of $X$ and $\pi_f$ is the classic period of $f$.
\end{conjecture}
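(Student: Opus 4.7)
The plan is to fix a Fano polygon $P$ and argue case-by-case across the 29 mutation classes of Fano polygons whose singularity content is supported on $\frac{1}{3}(1,1)$ points. For each class, the strategy is: (i) build a geometric model of $\cX$ using the Corti--Heuberger constructions; (ii) compute (possibly a restriction of) the $J$-function of $\cX$ via one of the three available tools — the toric mirror theorem, the orbifold Quantum Lefschetz, or the orbifold Abelian/non-Abelian Correspondence; (iii) extract the regularised quantum period $\widehat{G}_X$ from the $J$-function; (iv) compute $\pi_f$ directly as the constant-term series associated to $f \in \Tmaximallymutable(P)$; (v) read off the affine-linear map $\Phi$ by matching coefficients.

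For the handful of cases in which $\cX$ itself is toric, I would invoke the toric mirror theorem of \S\ref{sec:mirror_theorem} to write the $J$-function of $\cX$ explicitly as a hypergeometric series indexed by curve classes; $\widehat{G}_X$ is then obtained as the appropriate component. On the Laurent polynomial side, $\pi_f$ expands as an explicit sum over lattice points in dilates of $P$, and a monomial-by-monomial comparison identifies $\Phi$ as an affine-linear isomorphism of parameter spaces.

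For the larger list of cases in which $\cX$ is a complete intersection in a toric Deligne--Mumford stack $\cY$, I would apply the conjectural orbifold Quantum Lefschetz of \S\ref{sec:quantum_Lef_principle}: compute the twisted $I$-function of $\cY$ built from the data of the cutting bundle and then recover the $J$-function of $\cX$ via an explicit mirror map. Because only parameters pulled back from $\juniorchenruan(\cY)$ are accessible, this yields at first only the restriction of $\widehat{G}_X$ to an affine subspace of $\juniorchenruan(\cX)$; this must be matched against $\pi_f$ restricted to a corresponding affine subspace of $\Tmaximallymutable(P)$, via a dictionary translating ambient divisor classes into edge-coefficient constraints on $f$. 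The remaining case with a weighted Grassmannian ambient $\cY$ is treated identically, except that the conjectural Abelian/non-Abelian Correspondence of \S\ref{sec:setup_abelian} is first used to reconstruct the $J$-function of $\cY$ from that of its abelianisation through a Weyl antisymmetrisation procedure.

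The hard part is multifaceted. First and most fundamentally, every non-toric computation rests on two orbifold generalisations — Quantum Lefschetz and Abelian/non-Abelian Correspondence — that are at present only conjectural and whose proofs would require substantial new work on moduli of twisted stable maps. Second, three families admit no currently known model as a complete intersection in a toric stack or as a zero locus in a GIT quotient, so a different route would be needed, perhaps through Italian-style birational descriptions combined with a precise analysis of how $\widehat{G}_X$ behaves under weighted blow-ups or $\QQ$-Gorenstein degenerations. Third, even among the complete intersection cases, the large Picard rank of one family makes the hypergeometric sum defining the $J$-function combinatorially unmanageable in full generality, forcing one to specialise parameters before the computation becomes tractable. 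These three obstacles explain why this strategy can at best establish a weaker form of the statement for the majority of families, and leaves the full conjecture open.
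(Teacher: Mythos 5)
The statement you were asked to address is Conjecture~B, which the paper does not prove: it only establishes the weaker, conditional Theorem~\ref{thm:main_result}, matching $\widehat{G}_X$ and $\pi_f$ on a non-empty affine subspace $W \subseteq \Tmaximallymutable(P)$ for 25 of the 26 relevant families, contingent on Conjectures~\ref{conj:tom_quantum_lefschetz} and~\ref{conj:abelian}. Your proposal follows essentially the same route as the paper — toric mirror theorem for the toric families, conjectural orbifold Quantum Lefschetz for the toric complete intersections, conjectural Abelian/non-Abelian Correspondence for the weighted Grassmannian case — and you correctly identify the same three obstructions (conjectural tools, missing models, Picard-rank complexity) that confine the paper to the weaker statement rather than the full conjecture.
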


Corti and Heuberger \cite{alessio_liana} have proved that, out of the 29 del~Pezzo surfaces with $\frac{1}{3}(1,1)$ points, only 26 surfaces admit a $\QQ$-Gorenstein degeneration to a toric surface. Indeed, the surfaces $X_{4, 1/3}$, $X_{5, 2/3}$ and $X_{6,1}$ do not have any $\QQ$-Gorenstein degeneration to a toric surface.

The Fano polygons $P$ such that the corresponding surface $X$, according to ($\star$), has only $\frac{1}{3}(1,1)$ points have been classified up to mutation by Kasprzyk, Nill and Prince \cite{minimal_polygons}. There are 26 mutation equivalence classes of such polygons and they correspond to the del Pezzo surfaces mentioned above.
The spaces $\Tmaximallymutable(P)$, for such polygons $P$, have been computed by Kasprzyk and Tveiten \cite{al_ketil}.

Combining these results with our calculations in Section \S\ref{sec:results} yields:

\begin{theorem} \label{thm:main_result} 
Let $P$, $X$ and $\cX$ satisfy $(\star)$. Suppose that $X$ has only $\frac{1}{3}(1,1)$ singularities and is not $X_{5,5/3}$.

If natural generalisations of the Quantum Lefschetz theorem (Conjecture \ref{conj:tom_quantum_lefschetz}) and of the Abelian/non-Abelian Correspondence (Conjecture \ref{conj:abelian}) hold, then there exist a non-empty affine subspace $W \subseteq \Tmaximallymutable (P)$ and an injective affine-linear map $\Phi \colon W \longrightarrow \juniorchenruan(\cX)$ such that 
\begin{equation*}
\forall f \in W, \qquad \widehat{G}_X ( \Phi(f); t) = \pi_f(t).
\end{equation*}
\end{theorem}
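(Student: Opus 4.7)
The proof is a systematic case-by-case verification running over the 25 mutation-equivalence classes of Fano polygons (equivalently, the 25 relevant deformation families of del~Pezzo surfaces with $\frac{1}{3}(1,1)$ singularities satisfying $(\star)$ and distinct from $X_{5,5/3}$). The plan is to compute, for each such surface, a restriction of the regularised quantum period $\widehat{G}_X$ to a well-chosen affine subspace of $\juniorchenruan(\cX)$, to compute the classical period $\pi_f$ for every $f$ in a matching affine subspace $W \subseteq \Tmaximallymutable(P)$, and then to exhibit an explicit injective affine-linear map $\Phi \colon W \hookrightarrow \juniorchenruan(\cX)$ identifying the two.

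First, I would partition the 25 families into three groups according to the geometric model provided by \cite{alessio_liana}: (i) the six toric families, for which I would invoke the toric mirror theorem of \S\ref{sec:mirror_theorem} directly (as illustrated in \S\ref{subsec:example_1}); (ii) the eighteen families realized as complete intersections of sections of direct sums of line bundles on toric Deligne--Mumford stacks, for which I would apply the conjectural orbifold Quantum Lefschetz principle of \S\ref{sec:quantum_Lef_principle} on top of the toric mirror theorem, as in \S\ref{subsec:example_2} and \S\ref{subsec:example_3}; and (iii) the single family $X_{1,7/3}$, realized as a complete intersection in a weighted Grassmannian, for which I would combine orbifold Quantum Lefschetz with the conjectural orbifold Abelian/non-Abelian Correspondence of \S\ref{sec:setup_abelian}, as in \S\ref{subsec:sample_abelian_nonabelian}. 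In each case the output is a closed-form expression, modulo the stated conjectures, for a restriction of $\widehat{G}_X$ to an affine subspace of $\juniorchenruan(\cX)$ carved out by the coefficients of the chosen homogeneous bundle or defining sections.

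Next, for each of the 25 polygons $P$ I would use the tables of Kasprzyk--Tveiten \cite{al_ketil} to write down the affine space $\Tmaximallymutable(P)$ and then compute the classical period $\pi_f = \sum_\delta \mathrm{coeff}_1(f^\delta)\, t^\delta$ term by term for the generic element. This is a finite constant-coefficient extraction in Laurent polynomials and can be carried out to any desired order; in practice one computes enough coefficients $c_\delta$ to uniquely identify the generalised hypergeometric $I$-function produced on the Gromov--Witten side. I would then define the affine-linear map $\Phi$ by matching the free parameters appearing on the mirror side (coefficients of the maximally mutable Laurent polynomial) with the natural affine coordinates on $\juniorchenruan(\cX)$ given by the identity classes of twisted sectors, and verify the coincidence $\widehat{G}_X(\Phi(f);t) = \pi_f(t)$ on the restricted domain $W = \Phi^{-1}(\mathrm{image})$. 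Injectivity of $\Phi$ on $W$ follows from the non-degeneracy of the matching of the leading non-trivial Fourier coefficients.

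The main obstacle is the combinatorial complexity of the quantum-period computations themselves. Applying the conjectural Quantum Lefschetz in the orbifold setting requires carefully tracking contributions from all age-weighted sectors of the ambient toric stack, and the hypergeometric modification must be separated into an ambient piece and a $J$-function piece, then desymmetrised to recover the non-equivariant quantum period; for surfaces with high Picard rank (notably $X_{6,2}$ among the toric examples, and several of the complete-intersection families) this produces multi-variable $I$-functions whose simplification is delicate and genuinely reliant on computer algebra (Magma). The Abelian/non-Abelian step for $X_{1,7/3}$ is the most demanding, since it requires both the orbifold twisting of Quantum Lefschetz and the orbifold analogue of the Martin-type antisymmetrisation, neither of which is established in the literature; the entire force of the theorem's hypotheses is concentrated there. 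All other steps reduce, after the mirror theorems are invoked, to matching two explicit power series via a linear change of parameters, and this matching is then the routine verification carried out in the tables of \S\ref{sec:results}.
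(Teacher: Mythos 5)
Your proposal is correct and follows essentially the same route as the paper: the theorem is obtained by combining the Kasprzyk--Nill--Prince classification of the relevant polygons and the Kasprzyk--Tveiten description of $\Tmaximallymutable(P)$ with the case-by-case quantum-period computations of Section~\ref{sec:results}, organised exactly as you describe into the six toric families (toric mirror theorem), the toric complete intersections (conjectural orbifold Quantum Lefschetz), and $X_{1,7/3}$ (conjectural orbifold Abelian/non-Abelian Correspondence), followed by an explicit affine-linear matching of parameters with the coefficients of the maximally mutable Laurent polynomials. The only caveat, shared with the paper itself, is that the final coincidence of power series is verified computationally coefficient by coefficient rather than established to all orders by a structural argument.
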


%%%%%%%%

\section{The quantum period}\label{sec:quantum period}

\subsection{Gromov--Witten theory for smooth proper Deligne--Mumford stacks} \label{sec:GW_for_stacks}

Gromov--Witten theory for smooth proper Deligne--Mumford has been developed by Chen--Ruan \cite{chen_ruan_cohomology} in the symplectic setting and by Abramovich--Graber--Vistoli \cites{algebraic_orbifold_quantum_products, gromov_witten_for_DM_stacks} in the algebraic setting. Here we recall just the basic definitions, following the concise expositions of \cite{iritani_periods}*{\S 2.1} and \cite{mirror_theorem}.

Let $\cX$ be a proper smooth Deligne--Mumford stack over $\CC$ and $X$ be its coarse moduli space, which is assumed to be projective. Let $\rI \cX$ be the inertia stack, which is the fibre product $\cX \times_{\Delta, \cX \times \cX, \Delta} \cX$ of the diagonal morphisms $\Delta \colon \cX \to \cX \times \cX$. For every $\CC$-scheme $S$, an $S$-valued point of $\rI \cX$ is a pair $(x,g)$, where $x$ is a $S$-valued point of $\cX$ and $g \in \mathrm{Aut}_{\cX(S)}(x)$ is a stabilizer of $x$. Let $\mathrm{Box}(\cX)$ be the set of the connected components of $\rI \cX$ and let
\begin{equation*}
\rI \cX = \coprod_{b \in \mathrm{Box}(\cX)} \cX_b
\end{equation*}
be the decomposition of $\rI \cX$ into connected components. In $\mathrm{Box}(\cX)$ there is a special element $0 \in \mathrm{Box}(\cX)$ which corresponds to the trivial stabilizer $g=1$ and whose associated connected component $\cX_0$ of $\rI \cX$ is just $\cX$. For every  $b \in \mathrm{Box}(\cX)$, let $\mathrm{age}(b) \in \QQ_{\geq 0}$ be the \emph{age} of the component $\cX_b$ (see \cite[\S 3.2]{chen_ruan_cohomology}, where it is called degree shifting number, or \cite[\S 7.1]{gromov_witten_for_DM_stacks}). Let $\rH^\bullet_\mathrm{CR}(\cX)$ be  the even part of the \emph{Chen--Ruan orbifold cohomology group} of $\cX$, i.e. the $\QQ$-graded vector space over $\QQ$ given by
\begin{equation}
\rH^p_\mathrm{CR}(\cX) := \bigoplus\limits_{\substack{b \in \mathrm{Box}(\cX) \text{ s.t.} \\ p - 2 \mathrm{age}(b) \in 2 \ZZ}} \rH^{p - 2 \mathrm{age}(b)} (\cX_b; \QQ)
\end{equation}
for every $p \in \QQ$. In all spaces considered below there will be only even cohomology classes. We see that $\rH^\bullet_\mathrm{CR}(\cX)$ coincides, as a vector space, with the even degree cohomology  $\rH^\mathrm{even}(\rI \cX;\QQ)$ of the inertia stack, but the gradings on these two vector spaces are different.

We have an involution $\mathrm{inv} \colon \rI \cX \to \rI \cX$ given by $(x,g) \mapsto (x,g \inv)$. This induces an involution $\mathrm{inv}^\star \colon \rH^\bullet_\mathrm{CR}(\cX) \to  \rH^\bullet_\mathrm{CR}(\cX)$. The \emph{orbifold Poincar\'e pairing} $(\cdot,\cdot)_\mathrm{CR}$ is the bilinear form on $\rH^\bullet_\mathrm{CR}(\cX)$ defined by 
\begin{equation}
(\al, \beta)_\mathrm{CR} := \int_{\rI \cX} \al \cup \mathrm{inv}^\star \beta.
\end{equation}
It is symmetric and non-degenerate, because $\rI \cX$ is smooth.

For $d \in \rH_2(X;\ZZ)$ and $n \geq 0$, let $\cX_{0,n,d}$ be the \emph{moduli stack of stable maps}\footnote{This is the same as the stack of twisted stable maps $\cK_{0,n}(\cX,d)$ in \cite{abramovich_vistoli_stable_maps}.} to $\cX$ of genus $0$, with $n$ marked points and degree $d$.
This is equipped with a virtual fundamental class $[\cX_{0,n,d}]^\mathrm{vir} \in \rH_\bullet (\cX_{0,n,d}; \QQ)$ and evaluation maps
\begin{equation}
\rme \rmv_i \colon \cX_{0,n,d} \to \rI^\mathrm{rig} \cX
\end{equation}
to the rigidified inertia stack $\rI^\mathrm{rig} \cX$ (see \cite[\S 3.4]{gromov_witten_for_DM_stacks}), for $i=1,\dots,n$. Since the stacks $\rI^\mathrm{rig} \cX$ and $\rI \cX$ have the same coarse moduli space, there are canonical isomorphisms between their cohomology groups with rational coefficients. Thus, we can think of elements of $\rH^\bullet_\mathrm{CR}(\cX)$ as cohomology classes on $\rI^\mathrm{rig} \cX$. For $i=1,\dots, n$, let $\psi_i \in \rH^2(\cX_{0,n,d};\QQ)$ be the first Chern class of the $i$th universal cotangent line bundle $\cL_i \in \mathrm{Pic}(\cX_{0,n,d})$; the fibre of $\cL_i$ at a stable map $f \colon (\cC; x_1, \dots, x_n) \to \cX$ is the cotangent space $\rT^*_{x_i} C$ at the $i$th marked point of the coarse curve $C$ of $\cC$. \emph{Gromov--Witten invariants} of $\cX$ are
\begin{equation*}
\left\langle \al_1 \psi^{k_1}, \dots, \al_n \psi^{k_n} \right\rangle_{0,n,d} := \int_{[\cX_{0,n,d}]^\mathrm{vir}} \prod_{i=1}^n \left( \mathrm{ev}^\star_i(\al_i) \cup \psi_i^{k_i} \right),
\end{equation*}
where $\al_1, \dots, \al_n \in \rH^\bullet_\mathrm{CR}(\cX)$ and  $k_1, \dots, k_n$ are non-negative integers.
Roughly speaking, if $k_1 = \cdots = k_n = 0$, this is the `virtual number' of possibly-nodal $n$-pointed orbifold curves in $\cX$ of genus $0$ and degree $d$ which are incident at the $i$th marked point, $1 \leq i \leq n$, to a chosen generic cycle Poincar\'e-dual to $\al_i$ and which have isotropy at the $i$th marked point specified by $\al_i$. If any of the $k_i$ are non-zero then we count only curves which in addition satisfy certain constraints on their complex structure.

\subsection{Givental's symplectic formalism} \label{sec:Givental_formalism}

Let $\cX$ be a proper smooth Deligne--Mumford stack over $\CC$ with projective coarse moduli space $X$.
Let $\mathrm{Eff}(\cX) \subseteq \rH_2(X;\ZZ)$ denote the submonoid generated by the homology classes in $X$ represented by images of representable maps from complete stacky curves to $\cX$. If $R$ is a commutative ring, then the \emph{Novikov ring} $\boldsymbol{\Lambda}(R)$ on $R$ is the completion of the group $R$-algebra $R[\mathrm{Eff}(\cX)]$ with respect to an additive valuation defined by a polarization on $X$ which we choose once and for all (see \cite[Definition 2.5.4]{tseng_orbifold_quantum_riemann_roch}). If $d \in \mathrm{Eff}(\cX)$ we denote by $Q^d$ the corresponding element in the Novikov ring $\boldsymbol{\Lambda}(R)$.

Following Givental \cite{givental_lagrangian_cone} and Tseng \cite{tseng_orbifold_quantum_riemann_roch}, we consider the infinite dimensional $\CC$-vector space
\begin{equation} \label{eq:symplectic_vector_space_H}
\cH_\cX := \rH^\bullet_\mathrm{CR}(\cX) \otimes_\QQ \boldsymbol{\Lambda} \left( \CC ( \! ( z \inv ) \!) \right),
\end{equation}
where $z$ is a formal variable and $\CC ( \! ( z \inv ) \!)$ is the fraction field $\CC [ \! [ z \inv, z ]$ of the power series ring $\CC [ \! [ z \inv ] \! ]$, equipped with the symplectic form
\begin{equation*}
\Omega(f,g) = - \mathrm{Res}_{z = \infty} \big( f(-z), g(z) \big)_\mathrm{CR} \rmd z \qquad \qquad \text{for } f,g \in \cH_\cX.
\end{equation*}
In the symplectic vector space $(\cH_\cX, \Omega)$ there is a Lagrangian submanifold $\cL_\cX$, which is a formal germ of a cone with vertex at the origin and which encodes all genus-zero Gromov--Witten invariants of $\cX$. We will not give a precise definition of $\cL_\cX$ here, referring the reader to \cite[\S 3.1]{tseng_orbifold_quantum_riemann_roch}, \cite[Appendix B]{computing_twisted} and \cite[\S 2]{mirror_theorem}. $\cL_\cX$ is called the \emph{Givental cone} of $\cX$ and determines and is determined by Givental's \emph{J-function}:
\begin{equation} \label{eq:J-function}
J_\cX (\gamma,z) = z + \gamma + \sum_{d \in \mathrm{Eff}(\cX)} \sum_{n=0}^\infty \sum_{k=0}^\infty \sum_{\epsilon = 1}^N \frac{Q^d}{n!}
\left\langle \gamma, \dots, \gamma, \phi^\epsilon \psi^k \right\rangle_{0, n+1, d} \phi_\epsilon z^{-k-1},
\end{equation}
where:
\begin{itemize}
\item $\gamma$ runs in the even part $\rH^\bullet_{\rC \rR} (\cX)$ of the Chen--Ruan orbifold cohomology  of $\cX$;
\item $\{ \phi_1, \dots \phi_N \}$ and $\{ \phi^1, \dots \phi^N \}$ are homogeneous bases of the $\QQ$-vector space $\rH^\bullet_\mathrm{CR}(\cX)$ which are dual with respect to the orbifold Poincar\'e pairing $( \cdot, \cdot )_\mathrm{CR}$.
\end{itemize}
The cone $\cL_\cX$ determines the J-function because $J_\cX(\gamma,-z)$ is the unique point on $\cL_\cX$ of the form $-z + \gamma + \rO(z \inv)$, where $\rO(z \inv)$ denotes a power series in $z \inv$. Conversely, the J-function determines $\cL_\cX$ and all genus-zero Gromov--Witten invariants of $\cX$ by \cite[Proposition 2.1]{gholampour_tseng_computations} and topological recursion relations.

\subsection{The quantum period of a Fano orbifold} \label{sec:quantum_period_Fano_orbifold}

An \emph{orbifold} is defined to be a separated smooth connected Deligne--Mumford stack $\cX$ of finite type over $\CC$ such that the stabiliser of the generic point is trivial. Following \cite[Definition 5.11]{working_weighted}, we say that an orbifold $\cX$ is \emph{well-formed}\footnote{A well-formed orbifold is called a canonical smooth Deligne--Mumford stack by Fantechi--Mann--Nironi \cite[Definition 4.4]{Fantechi_toric_stacks}.} if the natural morphism $\cX \to X$ to the coarse moduli space is an isomorphism in codimension $1$. In other words, an orbifold is well-formed if the stacky locus has codimension at least $2$.

If $\cX$ is a well-formed orbifold and its coarse moduli space $X$ is a scheme, then $X$ is a Cohen--Macaulay $\QQ$-factorial normal variety with quotient singularities such that $\mathrm{Pic}(\cX) \simeq \mathrm{Pic}(X_\mathrm{sm}) \simeq \mathrm{Cl}(X)$, where $X_\mathrm{sm}$ is the smooth locus of $X$ and $\mathrm{Cl}(X)$ is the divisor class group of $X$. Conversely, a normal separated variety with quotient singularities is the coarse moduli space of a unique well-formed orbifold, by \cite[(2.8) and (2.9)]{vistoli_intersection} and \cite[\S 4.1]{Fantechi_toric_stacks}. In other words, there is a one-to-one correspondence between well-formed orbifolds with schematic coarse moduli space and normal separated varieties with quotient singularities.

When $X$ is a normal separated variety with quotient singularities, we denote by $\cX$ the unique well-formed orbifold such that $X$ is its coarse moduli space.

\begin{definition}
 A well-formed orbifold $\cX$ is called a \emph{Fano orbifold} if its coarse moduli space $X$ is a projective variety such that its anticanonical class $- K_X$ is an ample $\QQ$-Cartier divisor.
 \end{definition}
 
\noindent There is a one-to-one correspondence between Fano orbifolds and normal projective varieties with quotient singularities such that $\cO_X(-mK_X)$ is a very ample line bundle on $X$, for some $m \geq 1$.

 \medskip
 
The quantum period of a Fano orbifold $\cX$ is a generating function of certain genus-zero Gromov--Witten invariants of $\cX$.

\begin{definition} \label{def:quantum_period}
Let $\cX$ be a Fano orbifold and let $b_1, \dots, b_r \in \mathrm{Box}(\cX)$ be the indices of the connected components of the inertia stack $\rI \cX$ such that $0 < \mathrm{age}(b_i) < 1$. For $i=1, \dots, r$, let
\begin{equation*}
\mathbf{1}_{b_i} \in \rH^0(\cX_{b_i};\QQ) \subseteq \rH^{2 \mathrm{age}(b_i)}_\mathrm{CR}(\cX)
\end{equation*}
be the identity cohomology class of the component $\cX_{b_i}$. If $d \in \mathrm{Eff}(\cX)$, $n \in \NN$ and $1 \leq i_1, \dots, i_n \leq r$, then set 
\begin{align*}
\delta_{d,i_1,\dots, i_n} &:= -K_X \cdot d + \sum_{j=1}^n \left( 1 - \mathrm{age} \left(b_{i_j} \right) \right) \in \QQ.
\end{align*}
The \emph{quantum period} of $\cX$ is:
\begin{align*}
 G_\cX & (x_1, \dots, x_r ; t ) = \ 1 \ + \\
&+ \sum_{d \in \mathrm{Eff}(\cX)} \sum_{n =0}^\infty \sum_{1 \leq i_1, \dots, i_n \leq r}  \left\langle \mathbf{1}_{b_{i_1}}, \dots, \mathbf{1}_{b_{i_n}}, \frac{ \phi_{\rmv \rmo \rml}}{1 - \psi} \right\rangle_{0,n+1,d} \frac{x_{i_1} \cdots x_{i_n}}{n!} t^{\delta_{d, i_1, \dots, i_n}},
\end{align*}
where $\phi_\mathrm{vol} \in \rH^{2 \dim \cX}(\cX; \QQ)$ is the cohomology class of a point, $t,x_1,\dots, x_r$ are formal variables and  $\frac{1}{1-\psi}$ denotes the series $\sum_{k \geq 0} \psi^k$.
\end{definition}

The quantum period comes from a specialisation of a component of the J-function. Indeed, $G_\cX$ is obtained from the component of the J-function $J_\cX$ along the unit class $\mathbf{1}_0 \in \rH^0(X;\QQ) \subseteq \rH^0_\mathrm{CR}(\cX)$ by applying the following substitutions:
\begin{itemize}
\item replacing the Novikov variable $Q^d$  by $t^{-K_X \cdot d}$,
\item setting $ z = 1$,
\item setting $
\gamma = t^{1 - \mathrm{age}(b_1)} x_1 \mathbf{1}_{b_1} + \cdots + t^{1 - \mathrm{age} (b_r)} x_r \mathbf{1}_{b_r}$.
\end{itemize}

\begin{notation}
If $d \in \mathrm{Eff}(\cX)$, $n \in \NN$ and $1 \leq i_1, \dots, i_n \leq r$, then set
\begin{align*}
\rG \rW_{d,i_1,\dots, i_n} &:= \left\langle \mathbf{1}_{b_{i_1}}, \dots, \mathbf{1}_{b_{i_n}}, \phi_{\rmv \rmo \rml} \psi^{\delta_{d, i_1, \dots, i_n} - 2} \right\rangle_{0,n+1,d} \in \QQ.
\end{align*} 
\end{notation}

\begin{proposition} \label{prop:quantum_period}
If $\cX$ is a Fano orbifold, then $G_\cX \in \QQ[x_1, \dots, x_r] [ \! [ t ] \!]$ and the following formula holds:
\begin{equation} \label{eq:quantum_period_accorciato}
 G_\cX(x_1, \dots, x_r;t) = 1 + \sum\limits_{\substack{d \in \mathrm{Eff}(\cX), \\ n \in \NN, \\ 1 \leq i_1, \dots, i_n \leq r}} \frac{\rG \rW_{d,i_1,\dots,i_n}}{n!} x_{i_1} \cdots x_{i_n} t^{\delta_{d,i_1,\dots,i_n}}.
\end{equation}
Moreover:
\begin{itemize}
\item[(i)] the coefficient of $t$ in $G_\cX$ is zero;
\item[(ii)] if $f$ is the Fano index of $X$, then in the specialisation
$
G_\cX (0, \dots, 0;t)$
only powers of $t^f$ appear, i.e. $G_\cX(0,\dots, 0;t) \in \QQ[ \! [ t^f ] \!]$.
\end{itemize}

\end{proposition}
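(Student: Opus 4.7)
The plan is to start from Definition~\ref{def:quantum_period} and use the expansion $\frac{1}{1-\psi}=\sum_{k\ge 0}\psi^{k}$ to rewrite each Gromov--Witten bracket as a sum of invariants with a single $\psi$-class. I would then invoke the virtual dimension formula on $\cX_{0,n+1,d}$ to isolate the unique nonzero contribution. Concretely, the moduli stack $\cX_{0,n+1,d}$ with marked sectors $(b_{i_{1}},\dots,b_{i_{n}},0)$ has virtual dimension
\begin{equation*}
\mathrm{vdim}=\dim\cX-K_{X}\cdot d+n-2-\sum_{j=1}^{n}\mathrm{age}(b_{i_{j}}).
\end{equation*}
Since $\mathbf{1}_{b_{i_{j}}}\in\rH^{2\mathrm{age}(b_{i_{j}})}_{\mathrm{CR}}(\cX)$ pulls back under $\mathrm{ev}_{j}$ to a class of ordinary degree zero on $\cX_{0,n+1,d}$, while $\phi_{\mathrm{vol}}$ pulls back to a class of ordinary degree $2\dim\cX$, the integrand $\bigl(\prod_{j}\mathrm{ev}_{j}^{\star}\mathbf{1}_{b_{i_{j}}}\bigr)\cup\mathrm{ev}_{n+1}^{\star}\phi_{\mathrm{vol}}\cup\psi^{k}$ pairs nontrivially with $[\cX_{0,n+1,d}]^{\mathrm{vir}}$ only when $k=\delta_{d,i_{1},\dots,i_{n}}-2$. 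This immediately yields the identity~\eqref{eq:quantum_period_accorciato}.

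Next, I would verify that for each fixed $\delta\in\QQ$ the coefficient of $t^{\delta}$ in $G_{\cX}$ is a polynomial in $x_{1},\dots,x_{r}$. The ampleness of $-K_{X}$ and the finite generation of $\mathrm{Eff}(\cX)$ imply that $\{d\in\mathrm{Eff}(\cX):-K_{X}\cdot d\le\delta\}$ is finite. Because the ages satisfy $0<\mathrm{age}(b_{i})<1$, the quantities $1-\mathrm{age}(b_{i})$ are uniformly bounded below by a positive rational number; this bounds $n$ in terms of $\delta$, giving only finitely many monomials in the $x_{i}$'s at any given $t$-degree. The same bound $\delta_{d,i_{1},\dots,i_{n}}-2\ge 0$ required for nonvanishing forces all surviving terms to satisfy $\delta\ge 2$, which proves~(i) (since the $t^{0}$ contribution is the leading $1$).

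For~(ii) I would specialise to $x_{1}=\dots=x_{r}=0$: only the $n=0$ summands survive, with $\delta=-K_{X}\cdot d$. Writing $-K_{X}=fH$ in $\mathrm{Cl}(X)$ and using the well-formedness isomorphism $\mathrm{Pic}(\cX)\simeq\mathrm{Cl}(X)$, the Weil divisor $H$ lifts to an honest line bundle $\cL_{H}$ on $\cX$. For any representable stable map $f\colon\cC\to\cX$ of class $d\in\mathrm{Eff}(\cX)$ with untwisted marked point, one has $f^{\star}(-K_{\cX})=f\cdot f^{\star}\cL_{H}$, and comparing degrees on the coarse curve yields $-K_{X}\cdot d=f\cdot(H\cdot d)$ with $H\cdot d\in\ZZ$; hence every $\delta$ that appears lies in $f\ZZ$, giving $G_{\cX}(0,\dots,0;t)\in\QQ[\![t^{f}]\!]$.

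The main obstacle will be making rigorous the integrality claim in step~(ii): although dimension counting already forces $-K_{X}\cdot d\in\ZZ_{\ge 2}$, divisibility by $f$ requires controlling the pairing of $d$ against the lift of $H$ to $\cX$, which is subtle in the presence of twisted nodes on the source curve. I would handle this either by restricting the summation to classes carried by representable maps (the definition of $\mathrm{Eff}(\cX)$) and invoking the well-formedness of $\cX$ to convert degrees on $\cC$ into integer intersection numbers with $\cL_{H}$, or by reducing the question to the smooth locus via a small resolution of $X$. The other steps are routine dimension and finiteness checks.
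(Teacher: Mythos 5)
Your derivation of formula \eqref{eq:quantum_period_accorciato} and your finiteness argument for the coefficient of each power of $t$ follow essentially the same route as the paper (expand $\frac{1}{1-\psi}$, match $k$ against the virtual dimension, bound $n$ by $\delta/(1-a)$ and bound $-K_X\cdot d$ by $\delta$), and the observation that nonvanishing forces $\delta_{d,i_1,\dots,i_n}\geq 2$ does prove (i). However, there are two genuine gaps. First, the assertion $G_\cX\in\QQ[x_1,\dots,x_r][\![ t ]\!]$ requires showing that every exponent $\delta_{d,i_1,\dots,i_n}$ with $\rG\rW_{d,i_1,\dots,i_n}\neq 0$ is an \emph{integer}; a priori it is only rational, since $-K_X$ is merely $\QQ$-Cartier and the ages are fractional, and you treat $\delta$ throughout as an element of $\QQ$ without ever addressing this. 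The paper settles it by orbifold Riemann--Roch applied to $\phiv^\star\rT_\cX$ for a twisted stable map $\phiv\colon\cC\to\cX$ realizing the invariant: $\delta_{d,i_1,\dots,i_n}=\deg_\cC\phiv^\star\rT_\cX-\sum_j\mathrm{age}(b_{i_j})+n=\chi(\cC,\phiv^\star\rT_\cX)-\mathrm{rk}(\phiv^\star\rT_\cX)\,\chi(\cC,\cO_\cC)+n\in\ZZ$. (A smaller point: you invoke ``finite generation of $\mathrm{Eff}(\cX)$'', which is neither needed nor obviously true; what one uses is that $\mathrm{Eff}(\cX)$ lies in a lattice inside the closed Mori cone, on which the ample class $-K_X$ is positive.)

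Second, for (ii) you correctly reduce to the divisibility $f\mid -K_X\cdot d$ and correctly flag it as the crux, but you do not close it, and your fallback via a small resolution of $X$ cannot work: quotient surface singularities admit no small resolution, and in general the class $d$ need not lift. The precise difficulty is that the degree of a line bundle on an orbifold curve is in general a rational number, so ``converting degrees on $\cC$ into integer intersection numbers with $\cL_H$'' is not automatic and well-formedness alone does not supply it. The paper's resolution is again orbifold Riemann--Roch, now applied to the $f$-th root $\cL$ of $\omega_\cX^\vee$: because the unique marked point of the relevant $1$-pointed stable map lies in the untwisted sector (and the node contributions balance), one gets $\deg_\cC\phiv^\star\cL=\chi(\cC,\phiv^\star\cL)-\chi(\cC,\cO_\cC)\in\ZZ$, whence $-K_X\cdot d=f\cdot\deg_\cC\phiv^\star\cL\in f\ZZ$. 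You should import this Riemann--Roch step explicitly; without it, both the integrality of the exponents and part (ii) remain unproved.
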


\begin{proof}
Notice
\begin{equation*}
\left\langle \mathbf{1}_{b_{i_1}}, \dots, \mathbf{1}_{b_{i_n}}, \frac{ \phi_{\rmv \rmo \rml}}{1 - \psi} \right\rangle_{0,n+1,d} = \sum_{k \in \NN} \left\langle \mathbf{1}_{b_{i_1}}, \dots, \mathbf{1}_{b_{i_n}}, \phi_{\rmv \rmo \rml} \psi^k \right\rangle_{0,n+1,d}.
\end{equation*}
If the Gromov--Witten invariant $ \left\langle \mathbf{1}_{b_{i_1}}, \dots, \mathbf{1}_{b_{i_n}}, \phi_{\rmv \rmo \rml} \psi^k \right\rangle_{0,n+1,d}$ is non-zero, then $\deg (\phi_\mathrm{vol} \psi_{n+1}^k) = 2 \dim \cX + 2k$
must be equal to the real virtual dimension of the corresponding component of $\cX_{0, n+1, d}$, which is
\begin{equation*}
2 \left[ -K_\cX \cdot d + \dim \cX - \mathrm{age}(b_{i_1}) - \cdots - \mathrm{age}(b_{i_n}) + (n+1) - 3 \right].
\end{equation*}
Thus
\begin{align*}
k &= -K_\cX \cdot d - \mathrm{age}(b_{i_1}) - \cdots - \mathrm{age}(b_{i_n}) + n - 2 \\
&= \delta_{d, i_1, \dots, i_n} - 2
\end{align*}
is uniquely determined by $d$, $n$ and $i_1, \dots, i_n \in \{ 1, \dots, r \}$. This shows that the formula \eqref{eq:quantum_period_accorciato} holds.

We prove that $\delta_{d, i_1, \dots, i_n}$ is an integer greater than $1$ whenever there exist $d \in \rH_2(X,\ZZ)$, $n \in \NN$ and $1 \leq i_1, \dots i_n \leq r$ such that $\delta = \delta_{d, i_1, \dots, i_n}$ and $\mathrm{GW}_{d, i_1, \dots, i_n} \neq 0$. In these circumstances there must exist a genus-zero $(n+1)$-pointed stable curve
\begin{equation*}
\xymatrix{
\cC \ar[d] \ar[r]^\phiv & \cX \ar[d]^\pi \\
C \ar[r]^{\overline{\phiv}} & X
}
\end{equation*}
such that $\overline{\phiv}_\star [C] = d$ and the marking gerbes $\Sigma_1, \dots, \Sigma_n, \Sigma_{n+1} \subseteq \cC$ give geometric points in the components $b_{i_1}, \dots, b_{i_n}, 0 \in \mathrm{Box}(\cX)$ of $\rI \cX$, respectively. By orbifold Riemann--Roch \cite[Theorem 7.2.1]{gromov_witten_for_DM_stacks}, we see that
\begin{align*}
\delta_{d, i_1, \dots, i_n} &= \deg_\cC \phiv^\star \rT_\cX - \mathrm{age}(b_{i_1}) - \cdots - \mathrm{age}(b_{i_n}) + n \\
&= \chi (\cC, \phiv^\star \rT_\cX) - \mathrm{rk}(\phiv^\star \rT_\cX) \chi(\cC, \cO_\cC) + n
\end{align*}
is an integer. Moreover, since Gromov-Witten invariants with negative gravitational descendants are zero by definition, $\delta_{d, i_1, \dots, i_n} \geq 2$. This proves (i).

Now we have to prove the finiteness of the sum \eqref{eq:quantum_period_accorciato}. More specifically we have to prove that, for every integer $\delta \geq 2$, the coefficient
\begin{equation} \label{eq:coefficient_of_t^delta}
\sum\limits_{\substack{d \in \mathrm{Eff}(\cX), \\ n \in \NN, \\ 1 \leq i_1, \dots, i_n \leq r, \\ \text{s.t. } \delta_{d, i_1, \dots, i_n} = \delta}} \frac{\rG \rW_{d,i_1,\dots,i_n}}{n!} x_{i_1} \cdots x_{i_n} 
\end{equation}
of $t^\delta$ is a polynomial in the variables $x_1, \dots, x_r$. This is a dimensional argument, as follows.

For each geometric point $p \colon \Spec \CC \to \cX$, denote by $e_p$ the exponent of the automorphism group of $p$. Call $e$ the least common multiple of the $e_p$ for all geometric points of $\cX$. By \cite[Lemma 2.1.2]{gromov_witten_for_DM_stacks}, the line bundle $(\det (\Omega^1_\cX)^\vee)^{\otimes e}$ on $\cX$ is a pull-back to $\cX$ of a line bundle $H$ on $X$. Since $\cX$ is a Fano orbifold, $H$ is an ample line bundle on $X$. In the divisor class group of $X$ we have the equality $H = - e K_X$. Following \cite{abramovich_vistoli_stable_maps}, for every $h,n \in \NN$ we denote by $\cK_{0,n}(\cX,h)$ the moduli stack of genus-zero $n$-marked stable maps $\phiv \colon \cC \to \cX$ such that $\deg_C \bar{\phiv}^\star H = h$, where $C$ is the coarse moduli space of $\cC$ and $\bar{\phiv} \colon C \to X$ is the morphism of schemes induced by $\phiv$.

Fix an integer $\delta \geq 2$. Let $a = \max_{1 \leq i \leq r} \mathrm{age}(b_i)$. If $d \in \mathrm{Eff}(\cX)$, $n \in \NN$ and $i_1, \dots, i_n \in \{ 1, \dots, r \}$ are such that $\delta_{d, i_1, \dots, i_n} = \delta$, then $\delta \geq n (1-a)$ and $\delta \geq -K_X \cdot d $, so $n \leq \delta / (1-a)$ and $H \cdot d \leq e \delta$. Therefore, the coefficient \eqref{eq:coefficient_of_t^delta}
of $t^\delta$ involves some intersection products on various connected components of the proper stack
\begin{equation*}
\coprod\limits_{\substack{n \leq \delta / (1-a), \\ h \leq e \delta}} \cK_{0,n} (\cX, h).
\end{equation*}
This shows that the sum \eqref{eq:coefficient_of_t^delta} is a polynomial in the variables $x_1, \dots, x_r$ with rational coefficients.

Now we prove assertion (ii). Let $\cL \in \Pic(\cX)$ be such that $\omega_\cX^\vee = \cL^{\otimes f}$. If $\left\langle \phi_\mathrm{vol} \psi^{-K_X \cdot d - 2} \right\rangle_{0,1,d} \neq 0$, then there exists a $1$-pointed stable curve $\phiv \colon \cC \to \cX$ such that $\bar{\phiv}_\star [C] = d$ and the marking gerbe $\Sigma_1 \subseteq \cC$ gives a geometric point in the trivial component of $\rI \cX$. Applying orbifold Riemann--Roch for $-K_X = \omega_\cX^\vee$ and $\cL$ gives that the anticanonical degree
\begin{align*}
-K_X \cdot d &= \deg_\cC \phiv^\star \omega_\cX^\vee \\
&= f \cdot \deg_\cC \phiv^\star \cL \\
&= f \cdot \left( \chi(\cC, \phiv^\star \cL) - \chi(\cC, \cO_\cC) \right)
\end{align*}
is divided by $f$. This concludes the proof of (ii).
\end{proof}

\begin{remark}
Let $\cX$ be a Fano orbifold and let $\mathbf{1}_{b_1}, \dots, \mathbf{1}_{b_r} \in \rH_\mathrm{CR}^\bullet(\cX)$ be the identity cohomology classes of the components of $\rI \cX$ with age between $0$ and $1$. Let $e$ be the least common multiple of the exponents of the automorphism groups of all geometric points of $\cX$. Let $\phi_\mathrm{vol}$ be the cohomology class of a point. For every $n \in \NN$ and $d \in \mathrm{Eff}(\cX)$, consider the $\QQ[ t^{1/e}]$-valued multilinear symmetric $n$-form on $\juniorchenruan(\cX)$ defined by
\begin{equation*}
(\mathbf{1}_{b_{i_1}}, \dots, \mathbf{1}_{b_{i_n}} ) \mapsto \left\langle \mathbf{1}_{b_{i_1}}, \dots, \mathbf{1}_{b_{i_n}}, \frac{ \phi_{\rmv \rmo \rml}}{1 - \psi} \right\rangle_{0,n+1,d} \prod_{j=1}^n t^{1 - \mathrm{age}(b_{i_j})}
\end{equation*}
for any $1 \leq i_1, \dots, i_n \leq r$. It induces an element $\Xi_{d,n} \in \mathrm{Sym}^n \juniorchenruan(\cX)^\vee \otimes_\QQ \QQ[t^{1/e}]$. The quantum period of $\cX$ can be written as
\begin{equation*}
G_\cX (t) = 1 + \sum\limits_{\substack{d \in \mathrm{Eff}(\cX), \\ n \in \NN}} \frac{\Xi_{d,n}}{n!} t^{-K_X \cdot d}.
\end{equation*}
Proposition \ref{prop:quantum_period} shows that $G_\cX \in (\mathrm{Sym}^\bullet \juniorchenruan(\cX)^\vee) [ \! [ t ] \! ]$. In this way, we can consider the quantum period as a family of power series $G_\cX \colon \juniorchenruan(\cX) \to \QQ[ \! [ t ] \! ]$.
\end{remark}

\begin{example}
If $\cX$ is a Fano orbifold such that its coarse moduli space $X$ has canonical singularities, then there are no connected components of $\rI \cX$ with positive age smaller than $1$ by the Reid--Tai criterion \cite[Theorem 3.21]{kollar_singularities_mmp}. Therefore, in this case, $\juniorchenruan(\cX) = 0$ and the quantum period of $\cX$ is
\begin{equation*}
G_\cX (t) = 1 + \sum\limits_{\substack{d \in \mathrm{Eff}(\cX) \\ \text{s.t. } -K_X \cdot d \geq 2}}  \left\langle \phi_\mathrm{vol} \psi^{-K_X \cdot d - 2} \right\rangle_{0,1,d} t^{-K_X \cdot d}.
\end{equation*} 
In particular, if $X$ is a smooth Fano variety then the formula above for the quantum period of $X$ agrees with \cite[Definition 4.2]{mirror_symmetry_and_fano_manifolds} and \cite[\S B]{quantum_periods_3folds}.
\end{example}

\begin{example}
Let $X$ be a del Pezzo surface with isolated quotient singularities of type $\frac{1}{3}(1,1)$ and let $\cX$ be the Fano orbifold associated to $X$. Assume that $X$ has $r$ singular points. Then, the inertia stack $\rI \cX$ has $1 + 2r$ connected components:
\begin{itemize}
\item the trivial connected component of age $0$, which is isomorphic to $\cX$;
\item $r$ connected components of age $2/3$, which are isomorphic to $\rB \boldsymbol{\mu}_3$;
\item $r$ connected components of age $4/3$, which are isomorphic to $\rB \boldsymbol{\mu}_3$.
\end{itemize}
For $i=1, \dots, r$, let $\mathbf{1}_i$ be the unit cohomology class of the $i$th connected component of age $2/3$. Then the quantum period of $\cX$ is:
\begin{align*}
G_\cX &(x_1, \dots, x_r;t) = \\
&= 1 + \sum\limits_{\substack{d \in \mathrm{Eff}(\cX), \\ n \in \NN, \\
 1 \leq i_1, \dots, i_n \leq r}}
 \left\langle \mathbf{1}_{i_1}, \dots, \mathbf{1}_{i_n}, \phi_{\rmv \rmo \rml} \psi^{-K_X \cdot d + \frac{n}{3}-2} \right\rangle_{0,n+1,d} \frac{x_{i_1} \cdots x_{i_n}}{n!} t^{- K_X \cdot d +  \frac{n}{3} }.
\end{align*}
\end{example}

\section{Toric stacks} \label{sec:toric}

\subsection{Stacky fans and extended stacky fans}
\label{sec:stacky_fans}

Here we briefly recall the theory of toric stacks \cites{BCS_toric_stacks, Fantechi_toric_stacks} and the combinatorial machinery developed in \cite{mirror_theorem}, which will allow us to produce a point of the Givental cone for a toric stack. We will present the case of toric well-formed orbifolds only.

Let $X$ be a simplicial toric variety which is proper over $\CC$. In other words, according to \cite{fulton_toric_varieties}, $X$ comes from a finitely generated free abelian group $N$ of finite rank\footnote{The general theory of \cite{BCS_toric_stacks} allows $N$ to be a finitely generated abelian group. Nevertheless, since we consider well-formed orbifolds rather than more general Deligne--Mumford stacks, we consider only the case in which $N$ is torsion free.} and a complete \emph{simplicial} fan $\Sigma$  in $N_\RR$. Let $\rho \colon \ZZ^n \to N$ be the linear map which maps the $i$th standard basis element of $\ZZ^n$ to the primitive generator $\rho_i$ of the $i$th ray of the fan $\Sigma$. So $n$ is the number of rays of $\Sigma$.  Let $\LL$ be the kernel of $\rho$.
The exact sequence 
\begin{equation} \label{eq:fan_sequence_general}
0 \longrightarrow \LL \longrightarrow \ZZ^n \overset{\rho}\longrightarrow N
\end{equation}
is called the \emph{fan sequence}.  Set $M := \Hom_\ZZ (N, \ZZ)$. Since the cokernel of $\rho$ is finite,  the dual map $\rho^* \colon M \to \ZZ^{* n}$, which is obtained from $\rho$ by applying $\Hom_\ZZ (-, \ZZ)$, is injective. The cokernel of $\rho^*$ is denoted by $\LL^\vee$ and is called the \emph{Gale dual} of $\rho$. We get a short exact sequence, which is called the \emph{divisor sequence}:
\begin{equation} \label{eq:divisor_sequence_general}
0 \longrightarrow M \longrightarrow \ZZ^{*n} \overset{D}\longrightarrow \LL^\vee \longrightarrow 0.
\end{equation}
We see that $\LL^\vee$ is an extension of the dual $\LL^* = \Hom_\ZZ (\LL, \ZZ)$ by a finite group which is isomorphic to $\coker \rho$. In particular, if $\rho$ is surjective, then $\LL^\vee = \LL^*$. It is well known that in \eqref{eq:divisor_sequence_general} the group $\ZZ^{*n}$ is identified with the group of torus-invariant Weil divisors of $X$ and  the group $\LL^\vee$ is canonically isomorphic to the divisor class group $\mathrm{Cl}(X)$: the image $D_i \in \LL^\vee$ of the $i$th standard basis element of $\ZZ^{* n}$ is the class of the $i$th toric divisor of $X$. The anticanonical class of $X$ is given by $-K_X = D_1 + \cdots + D_n \in \LL^\vee$. We have $\rN^1(X) = \LL^\vee \otimes_\ZZ \RR$ and the nef cone of $X$ is
\begin{equation} \label{eq:nef_cone_general}
\Nef (X) = \bigcap_{\sigma \in \Sigma} \mathrm{cone} \left\langle D_i \mid i \notin \sigma \right\rangle \subseteq \LL^\vee \otimes_\ZZ \RR = \rN^1(X),
\end{equation}
where $\mathrm{cone} \langle T \rangle = \{ \sum_j a_j t_j \mid t_j \in T, a_j \geq 0 \}$ is the cone spanned by a subset $T$ of a real vector space. Moreover, $\rA_1(X)_\QQ = \rH_2^\mathrm{alg}(X;\QQ) =\rN_1(X)_\QQ \simeq \LL \otimes_\ZZ \QQ$. The bilinear form $\LL^\vee \times \LL \to \ZZ$, which is induced by the duality pairing of $\ZZ^n$, induces the pairing $\rN^1(X) \times \rN_1(X) \to \RR$  between numerical classes of divisors and numerical classes of curve cycles. The Mori cone $\mathrm{NE}(X)$ is the dual cone of $\Nef(X)$ in $\LL \otimes_\ZZ \RR$.

Applying the exact functor $\Hom_\ZZ (-, \CC^*)$ to \eqref{eq:divisor_sequence_general}, we get a homomorphism of algebraic groups from from $\GG := \Hom (\LL^\vee, \CC^*)$ to the torus $(\CC^*)^n$. Since $(\CC^*)^n$ acts diagonally on $\AA^n_\CC$, there is an induced action of $\GG$ on $\AA^n_\CC$. Let $x_1, \dots, x_n$ be the standard coordinates on $\AA^n_\CC$. Consider the ideal $\mathrm{Irr}_\Sigma$ of $\CC[x_1, \dots, x_n]$ generated by the monomials $\prod_{i \colon \rho_i \notin \sigma} x_i$ as $\sigma \in \Sigma$ and the quasi-affine variety $U_\Sigma = \AA^n_\CC \setminus \rV (\mathrm{Irr}_\Sigma)$. The quotient stack $\cX := [U_\Sigma / \GG ]$ is called the \emph{toric stack} associated to the triple $(N, \Sigma, \rho)$, which is called a \emph{stacky fan}. By \cites{BCS_toric_stacks, Fantechi_toric_stacks}, $\cX$ is a proper well-formed orbifold and its coarse moduli space is $X$.

By \cite[Proposition 4.7]{BCS_toric_stacks}, the connected components of the inertia stack $\rI \cX$ are indexed by the finite set
\begin{equation*}
\mathrm{Box}(\Sigma) := N \cap \bigcup_{\sigma \in \Sigma} \left\{ \left. \sum_{i \colon \rho_i \in \sigma} a_i \rho_i \right| 0 \leq a_i <1  \right\}.
\end{equation*} 
The element $b = \sum_{\rho_i \in \sigma} a_i \rho_i \in \mathrm{Box}(\Sigma)$, for some $\sigma \in \Sigma$ and $0 < a_i < 1$, corresponds to the subvariety of $X$ defined by the homogeneous equations $x_i = 0$ for $\rho_i \in \sigma$. Its age is $\sum a_i$.

\bigskip

Now we describe the formalism of extended stacky fans according to \cite{jiang}. We choose a finite set $S$ with a map $S \to N$. We allow $S$ to be empty. We label the finite set $S$ by $\{ 1, \dots, m \}$ and write $s_j \in N$ for the image of the $j$th element of $S$. Following \cite{jiang}*{Definition 2.1}, we consider the $S$-\emph{extended stacky fan} $(N,\Sigma,\rho^S)$, where $\rho^S \colon \ZZ^{n+m} \to N$ is defined by
\begin{equation} \label{eq:extended_fan_map_general}
\rho^S (e_i) = \begin{cases}
\rho_i & i = 1, \dots, n \\
s_{i-n} & i = n+1, \dots, n+m
\end{cases}
\end{equation}
and $e_i$ is the $i$th standard basis vector for $\ZZ^{n+m}$. This gives the \emph{extended fan sequence}
\begin{equation} \label{eq:extended_fan_sequence_general}
0 \longrightarrow \LL^S \longrightarrow \ZZ^{n+m} \overset{\rho^S}\longrightarrow N 
\end{equation}
and by Gale duality the \emph{extended divisor sequence}
\begin{equation} \label{eq:extended_divisor_sequence_general}
0 \longrightarrow M \longrightarrow \ZZ^{n+m} \overset{D^S}\longrightarrow \LL^{S \vee} \longrightarrow 0.
\end{equation}
The inclusion $\ZZ^n \to \ZZ^{n+m}$ of the first $n$ factors induces an exact sequence
\begin{equation*}
0 \longrightarrow \LL \longrightarrow \LL^S \longrightarrow \ZZ^m,
\end{equation*}
which splits over $\QQ$ via the map $\QQ^m \to \LL^S \otimes_\ZZ \QQ$ that sends the $j$th standard basis vector to
\begin{equation*}
e_{j+n} - \sum_{i : \rho_i \in \sigma(j)} s^i_j e_i \in \LL^S \otimes_\ZZ \QQ \subseteq \QQ^{n+m}
\end{equation*}
where $\sigma(j) \in \Sigma$ is the minimal cone containing $s_j$ and the positive numbers $s^i_j$ are determined by $\sum_{i : \rho_i \in \sigma(j)} s^i_j \rho_i = s_j$. Thus we obtain an isomorphism:
\begin{equation} \label{eq:isomorphism_splitting}
\LL^S \otimes_\ZZ \QQ \simeq (\LL \otimes_\ZZ \QQ) \oplus \QQ^m.
\end{equation}
Therefore, an element $\lambda \in \LL^S \otimes_\ZZ \QQ \subseteq \QQ^{m+n}$ correspond to the pair $(d,k)$, where
\begin{align*}
d &= \sum_{i=1}^n \left( \lambda_i + \sum_{j=1}^m s^i_j \lambda_{n+j} \right) e_i \in \LL \otimes_\ZZ \QQ \subseteq \QQ^n,  \\
k &= \sum_{j=1}^m \lambda_{n+j} e_j \in \QQ^m. 
\end{align*}
The \emph{extended Mori cone} is the subset of $\LL^S \otimes_\ZZ \RR$ given by $\mathrm{NE}^S (\cX) = \mathrm{NE}(\cX) \times (\RR_{\geq 0})^m$ via the isomorphism \eqref{eq:isomorphism_splitting}. The extended Mori cone can be thought of as the cone spanned by the `extended degrees' of certain stable maps $ f \colon \cC \to \cX$: see \cite{mirror_theorem}*{\S 4} for details. The dual of $\mathrm{NE}^S (\cX)$ in $\LL^{S \vee} \otimes_\ZZ \RR$ is called the \emph{extended nef cone} of $\cX$ and is denoted by $\mathrm{Nef}^S (\cX)$. There is an equality of cones in $\LL^{S \vee} \otimes_\ZZ \RR$
\begin{equation} \label{eq:extended_nef_cone_general}
\mathrm{Nef}^S(\cX) = \bigcap_{\sigma \in \Sigma} \mathrm{cone} \left\langle \{ D^S_i \mid 1 \leq i \leq n, \rho_i \notin \sigma \} \cup \{ D^S_{n+j} \mid 1 \leq j \leq m \} \right\rangle,
\end{equation}
where $D^S_i \in \LL^{S \vee} \otimes_\ZZ \RR$ is the image of the $i$th element of the standard basis of $\ZZ^{n+m}$ via the map $D^S$.

For a cone $\sigma \in \Sigma$, denote by $\Lambda_\sigma^S \subseteq \LL^S \otimes_\ZZ \QQ$ the subset consisting of elements
\begin{equation*}
\lambda = \sum_{i=1}^{n+m} \lambda_i e_i \in \LL^S \otimes_\ZZ \QQ \subseteq \QQ^{n+m}
\end{equation*}
such that $\lambda_{n+j} \in \ZZ$, $1 \leq j \leq m$, and $\lambda_i \in \ZZ$ whenever $\rho_i \notin \sigma$ and $i \leq n$. Set $\Lambda^S := \bigcup_{\sigma \in \Sigma} \Lambda_\sigma^S$ and $\Lambda \rE^S := \Lambda^S \cap \rN \rE^S$. The \emph{reduction function} is $v^S \colon \Lambda^S \to \mathrm{Box}(\Sigma)$ defined by
\begin{equation*}
v^S (\lambda) = \sum_{i=1}^n \lceil \lambda_i \rceil \rho_i + \sum_{j=1}^m \lceil \lambda_{n+j} \rceil s_j = \sum_{i=1}^n \langle - \lambda_i \rangle \rho_i.
\end{equation*}
If $\lambda \in \Lambda^S_\sigma$, then $v^S(\lambda) \in \mathrm{Box}(\Sigma) \cap \sigma$.

\subsection{The mirror theorem for toric stacks} \label{sec:mirror_theorem}

By using the combinatorial objects associated to extended stacky fans (as in \S\ref{sec:stacky_fans}),  we give the definition of I-function for a toric stack. Let $\cX$ be a toric orbifold as above and let $(N, \Sigma, \rho^S)$ be an $S$-extended stacky fan defining $\cX$. Then the \emph{$S$-extended I-function} \cite{mirror_theorem} of $\cX$ is:
\begin{equation} \label{eq:I-function}
I^S(\tau, \xi, z) := z \rme^{\sum_{i=1}^n u_i \tau_i / z} \sum_{\lambda \in \Lambda \rE^S} \tilde{Q}^\lambda \rme^{\lambda \tau} \left( \prod_{i=1}^{n+m}  \ambientfactor{\lambda_i}{u_i} \right) \mathbf{1}_{v^S(\lambda)},
\end{equation}
where:
\begin{itemize}
\item $\tau = (\tau_1, \dots, \tau_n)$ are formal variables;
\item $\xi = (\xi_1, \dots, \xi_m)$ are formal variables;
\item for $1 \leq i \leq n$, $u_i \in \rH^2(\cX;\QQ)$ is the first Chern class of the the line bundle corresponding to the $i$th toric divisor $D_i$;
\item for $n+1 \leq i \leq n+m$, $u_i$ is defined to be zero;
\item for $\lambda \in \Lambda \rE^S$,
\begin{equation*}
\tilde{Q}^\lambda = Q^d \xi_1^{k_1} \cdots \xi_m^{k_m} \in \boldsymbol{\Lambda} [ \! [ \xi_1, \dots, \xi_m ] \! ],
\end{equation*}
where $d \in \LL \otimes_\ZZ \QQ$ and $k \in \NN^m$ are such that that $\lambda$ corresponds to $(d,k)$ via \eqref{eq:isomorphism_splitting} and $Q^d$ denotes the representative of $d \in \mathrm{Eff}(\cX)$ in the Novikov ring $\boldsymbol{\Lambda}$;
\item for $\lambda \in \Lambda \rE^S$, $\rme^{\lambda \tau} := \prod_{i=1}^n \rme^{(u_i \cdot d) \tau_i}$;
\item for $\lambda \in \Lambda \rE^S$, $\mathbf{1}_{v^S(\lambda)} \in \rH^0 (\cX_{v^S(\lambda)} ; \QQ) \subseteq \rH^{2 \mathrm{age}(v^S(\lambda))}_\mathrm{CR}(\cX)$ is the identity class supported on the component of inertia associated to $v^S(\lambda) \in \mathrm{Box}(\Sigma)$.
\end{itemize}
The I-function $I^S (\tau, \xi,z)$ is a formal power series in $Q, \xi, \tau$ with coefficients in $\rH^\bullet_\mathrm{CR} (\cX) \otimes_\QQ \CC ( \! ( z \inv ) \! )$.

\begin{theorem}[Mirror theorem for toric stacks \cite{mirror_theorem}] \label{thm:mirror_theorem}
Let $X$ be a projective simplicial toric variety, associated to the fan $\Sigma$ in the lattice $N$, and let $\cX$ be the corresponding toric well-formed orbifold constructed above. Let $S$ be a finite set equipped with a map to $N$. Then the $S$-extended I-function $I^S(\tau, \xi, -z)$ lies in the Givental cone $\cL_\cX$ for all values of the parameters $\tau$ and $\xi$.
\end{theorem}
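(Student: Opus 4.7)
The plan is to reduce to the unextended case $S = \emptyset$ and then deduce the general statement, following the strategy of \cite{mirror_theorem}. In the base case, one has to show $I^\emptyset(\tau, -z) \in \cL_\cX$ where the I-function reduces to the standard toric I-function of $\cX$. I would proceed by combining Givental's characterisation of points on the Lagrangian cone with orbifold virtual localisation: the cone $\cL_\cX$ can be characterised (in the orbifold setting, via Tseng \cite{tseng_orbifold_quantum_riemann_roch}) by a recursion along principal parts at the polar points $z = -u_i/a$. The residual torus $(\CC^\times)^n / \GG$ acts on $\cX_{0,1,d}$, the fixed loci are indexed by decorated trees of torus-invariant orbi-$\PP^1$'s in $\cX$, and the equivariant Euler classes of virtual normal bundles assemble into the hypergeometric factors $\ambientfactor{\lambda_i}{u_i}$. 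The Abramovich--Graber--Vistoli age bookkeeping \cite{gromov_witten_for_DM_stacks} at stacky nodes and marked points then produces precisely the twisted-sector label $\mathbf{1}_{v^\emptyset(\lambda)}$, and a non-equivariant limit completes this step.

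For arbitrary $S$, I would exploit the special form of the extra factors: each $s_j \in S$ introduces into the I-function a term $\bigl(\prod_a (az)\bigr)^{-1}$ depending only on $z$, together with a combinatorial shift already encoded by $v^S$. Following Jiang's formalism \cite{jiang}, these contributions may be organised as the action on $I^\emptyset$ of a formal multiplicative operator depending on $z$ and on $\xi$; such operators lie in Givental's twisted loop group and preserve the Lagrangian cone, so the extended statement follows from the base case. A geometrically more transparent variant is to enlarge the stacky fan by promoting each $s_j$ to an honest ray, obtaining a toric stack $\widetilde{\cX}$ in which $\cX$ embeds as a closed substack, applying the unextended case to $\widetilde{\cX}$, and descending to $\cX$ by specialising the additional K\"ahler parameters; functoriality of the construction ensures that the specialisation lands in $\cL_\cX$.

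The main obstacle is the unextended case, where the combinatorial output of orbifold virtual localisation must be aligned exactly with the closed hypergeometric expression \eqref{eq:I-function}. In particular, one must verify the vanishing of contributions from fixed loci corresponding to $\lambda \notin \Lambda\rE^S$ (an effectivity condition reflecting the description of $\mathrm{Eff}(\cX)$ in terms of the Mori cone) and show that the automorphism factors at each marking gerbe reproduce the reduction function $v^S$. These verifications form the technical core of \cite{mirror_theorem}; once they are in place, the extension to arbitrary $S$ is a formal manipulation, and the assertion follows uniformly in the parameters $\tau$ and $\xi$.
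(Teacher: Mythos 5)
The paper does not prove this statement: Theorem~\ref{thm:mirror_theorem} is imported wholesale from \cite{mirror_theorem} (Coates--Corti--Iritani--Tseng), so there is no internal argument to compare yours against. Measured against the proof in that reference, the core of your plan is on target: one characterises points of $\cL_\cX$ by their pole structure and a recursion on residues at $z = -u_i/a$, establishes the recursion by localisation for the residual torus action on the moduli of stable maps, matches the Euler classes of virtual normal bundles of the fixed loci with the hypergeometric factors, and uses the age grading of \cite{gromov_witten_for_DM_stacks} to account for the twisted-sector labels, before passing to the non-equivariant limit. That is genuinely the technical core of the cited proof.

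Where your plan has a real gap is the claimed reduction of general $S$ to $S = \emptyset$. The $S$-extended I-function is \emph{not} obtained from the unextended one by applying a cohomology-independent multiplicative operator in the twisted loop group: passing from $\Lambda\rE^\emptyset$ to $\Lambda\rE^S$ enlarges the index set by elements whose first $n$ coordinates $\lambda_i$ acquire new fractional parts (via the splitting \eqref{eq:isomorphism_splitting}, $\lambda_i$ is shifted by $-\sum_j s^i_j k_j$), which changes every factor $\ambientfactorwithoutnumerator{\lambda_i}{u_i}$ \emph{and} the twisted-sector label $\bfone_{v^S(\lambda)}$; only the factors attached to $i > n$ reduce to the innocuous $\xi_j^{k_j}/(k_j!\,z^{k_j})$. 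So the extension introduces genuinely new summands supported on new components of $\rI\cX$, not a rescaling of old ones. Your fallback of promoting each $s_j$ to an honest ray does not repair this: adding rays changes the fan and hence the stack (a toric modification), and descending from $\widetilde{\cX}$ to $\cX$ is a statement of crepant-transformation type, not a specialisation of K\"ahler parameters. In \cite{mirror_theorem} the extension is instead handled uniformly: the characterisation of $\cL_\cX$ is verified directly for $I^S$ for arbitrary $S$, with the extended data (Jiang's presentation of the same stack $\cX$ as a quotient by the larger torus $\Hom(\LL^{S\vee},\CC^*)$) entering the localisation bookkeeping from the start. If you restructure the plan so that the recursion is checked for $I^S$ itself rather than bootstrapped from $I^\emptyset$, the rest of your outline is sound.
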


The mirror theorem relates the combinatorics of toric geometry (namely the I-function) with Gromov--Witten theory (namely the Givental cone $\cL_\cX$). In \S \ref{subsec:example_1} we will show an example in which the mirror theorem is applied to compute the quantum period of a toric Fano orbifold.

\begin{remark} \label{rmk:version_I_function}
The formula \eqref{eq:I-function} for the extended I-function is given in \cite{mirror_theorem}. In our calculations we will use a slightly different version, which provides the same amount of information in Theorem \ref{thm:mirror_theorem}. Let $p_1, \dots, p_\ell$ be an integral basis of $\rH^2(\cX;\QQ)$. Then we use the formal variables $\tau = (\tau_1, \dots, \tau_\ell)$ and the exponentials appearing in \eqref{eq:I-function} are replaced by $\rme^{\sum_{i=1}^\ell p_i \tau_i /z}$ and by $\rme^{\lambda \tau} := \prod_{i=1}^\ell \rme^{(p_i \cdot d) \tau_i}$.
\end{remark}

\subsection{A toric example: the blow-up of $\PP(1,1,3)$ at one point.} \label{subsec:example_1}

Let $X$ be the blow-up of $\PP(1,1,3)$ at a smooth point. It has one singular point and degree
$22/3$, hence in our list it is called $X_{1,22/3}$. The well-formed orbifold $\cX$ having $X$ as coarse moduli space is a toric stack and is defined by the following stacky fan.
 \begin{minipage}{10cm}
Let $P$ be the polygon n.25 in the list below, let $N = \ZZ^2$, and let $\Sigma$ be the spanning fan of $P$. The rays of $\Sigma$, which is a complete fan in $\RR^2$, are the columns of the matrix
\begin{equation*}
\rho = \begin{bmatrix}
1 & 0 & -1 & -2  \\
-1 & 1 & 2 & 1
\end{bmatrix}.
\end{equation*}
\end{minipage}
\begin{minipage}{2cm}
 \includegraphics[scale=0.7]{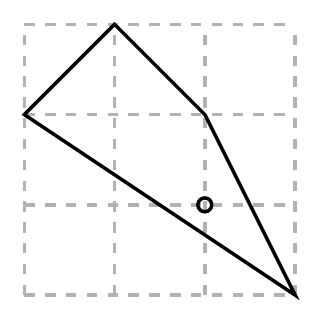}
\end{minipage}

\noindent It is easy to observe that the rays $\rho_1,\rho_3$ and $\rho_4$ would define $\PP(1,1,3)$ and that the toric divisor corresponding to $\rho_2$ is obtained after blowing-up $\PP(1,1,3)$ at one torus-invariant point.

A basis of $\LL = \ker (\rho \colon \ZZ^4 \to N)$ is given by the vectors 
\begin{equation} \label{eq:basis_of_L_example1}
\begin{bmatrix}
3 \\ 0 \\ 1 \\ 1
\end{bmatrix},
\begin{bmatrix}
-1 \\ 1 \\ -1 \\ 0
\end{bmatrix}.
\end{equation}
We use this basis to identify $\LL$ with $\ZZ^2$.
The fan sequence \eqref{eq:fan_sequence_general} is
\begin{equation*}
0 \longrightarrow \LL \simeq \ZZ^2 \longrightarrow \ZZ^4 \overset{\rho}\longrightarrow N = \ZZ^2 \longrightarrow 0.
\end{equation*}
Since $\rho$ is surjective, the divisor sequence \eqref{eq:divisor_sequence_general} is
\begin{equation*}
0 \longrightarrow M = \ZZ^2 \longrightarrow \ZZ^4 \overset{D}\longrightarrow \LL^\vee \simeq \ZZ^2 \longrightarrow 0,
\end{equation*}
where $\LL^\vee$ is identified with $\ZZ^2$ via the dual basis $\{ p_1, p_2 \}$ of \eqref{eq:basis_of_L_example1} and the linear map $D$ is given by the matrix
\begin{equation*}
D = \begin{bmatrix}
3 & 0 & 1 & 1 \\
-1 & 1 & -1 & 0
\end{bmatrix}.
\end{equation*}
The anticanonical class of $X$ is given by the sum of the divisor classes of irreducible torus-invariant divisors: $-K_X = 5p_1 - p_2$ in $\mathrm{Cl}(X)$.
 
The irrelevant ideal is $\mathrm{Irr}_\Sigma =  ( x_3 x_4, x_1 x_4, x_1 x_2, x_2 x_3 )$. Set $U_\Sigma = \AA^4_\CC \setminus \rV(\mathrm{Irr}_\Sigma)$. The matrix $D$ induces a group homomorphism from the $2$-dimensional torus $\GG_\rmm^2$ to the group of invertible diagonal matrices inside $\rG \rL_4$, therefore we have an action of $\GG_\rmm^2$ on the affine space $\AA^4_\CC$. The toric variety $X$ is the geometric quotient of $U_\Sigma$ with respect to this action, i.e. $X =  U_\Sigma / \GG_\rmm^2$, and $\cX$ is the quotient stack $[U_\Sigma / \GG_\rmm^2 ]$. Using \eqref{eq:nef_cone_general} we get that the nef cone of $X$ is $\Nef(X) =  \mathrm{cone}\langle 3 p_1 - p_2,p_1\rangle \subseteq \LL^\vee \otimes_\ZZ \RR$. The anticanonical class $-K_X$ is in the interior of the nef cone, i.e. $X$ is Fano.

\begin{comment}

The \emph{Mori cone} $\rN \rE (X)$ is the dual of $\mathrm{Nef}(X)$ inside $\LL \otimes_\ZZ \RR$. If $l_1, l_2$ are the coordinates on $\LL$ with respect to the basis in \eqref{eq:p}, $\mathrm{NE}(X)$ is defined by the inequalities $3 l_1 - l_2 \geq 0$ and $l_1 \geq 0$.

\end{comment}

Now we analyze the Chen--Ruan cohomology of $\cX$. The inertia stack $\rI \cX$ has three connected components: the component with age $0$, which is isomorphic to $\cX$, and two components isomorphic to $\rB \bmu_3$ corresponding to the non-trivial stabilizers of the singular point, which have ages $\frac{2}{3}$ and $\frac{4}{3}$. A basis of the rational cohomology of $X$ is given by $\{ \bfone_0, p_1, p_2, \rmp \rmt \}$.  Therefore, if we denote by $\bfone_{2/3}$ and $\bfone_{4/3}$ the cohomology classes of the non-trivial components of $\rI \cX$, we have that $\{ \bfone_0, \bfone_{2/3}, p_1, p_2, \bfone_{4/3}, \rmp \rmt \}$ is a basis of $\chenruan(\cX)$.  The set of the connected components of $\rI \cX$ is in a canonical one-to-one correspondence with $ \mathrm{Box}(\Sigma) = \{ (0,0), (-1,1), (-2,2) \}$; the zero vector corresponds to the trivial component of $\rI \cX$, whereas the vectors $(-1,1)$ and $(-2,2)$ correspond to the non-trivial components of $\rI \cX$ with ages $\frac{2}{3}$ and $\frac{4}{3}$, respectively.

Since we are interested in the part of $\chenruan(\cX)$ of degree smaller than $2$, we `extend' with the vector $(-1,1)$. In other words, we consider the map $S = \{1 \} \to N$ with $s_1 = (-1,1) \in N$ and the corresponding extended stacky fan, which is the one with extended ray map \eqref{eq:extended_fan_map_general}
\begin{equation*}
\rho^S = \begin{bmatrix}
1 & 0 & -1 & -2  & -1\\
-1 & 1 & 2 & 1 & 1
\end{bmatrix}.
\end{equation*}
A basis of $\LL^S = \ker (\rho^S \colon \ZZ^5 \to \ZZ^2 = N)$ is given by the vectors 
\begin{equation} \label{eq:basis_L_extended_example1}
\begin{bmatrix}
3 \\ 0 \\ 1 \\ 1 \\ 0
\end{bmatrix},
\begin{bmatrix}
-1 \\ 1 \\ -1 \\ 0 \\ 0
\end{bmatrix},
\begin{bmatrix}
1 \\ 0 \\ 0 \\ 0 \\ 1
\end{bmatrix}.
\end{equation}
We use this basis to identify $\LL^S$ with $\ZZ^3$ and we call $l_1, l_2, k$ the coordinates with respect to this basis. The extended fan sequence \eqref{eq:extended_fan_sequence_general} is
\begin{equation*}
0 \longrightarrow \LL^S \simeq \ZZ^3 \longrightarrow \ZZ^5 \overset{\rho^S}\longrightarrow N = \ZZ^2 \longrightarrow 0,
\end{equation*}
where the inclusion $\LL^S \otimes_\ZZ \RR \into \RR^5$ is given by
\begin{equation*}
(l_1, l_2, k) \mapsto \begin{pmatrix}
3 l_1 - l_2 + k \\
l_2 \\
l_1 - l_2 \\
l_1 \\
k
\end{pmatrix}.
\end{equation*}
The extended divisor sequence \eqref{eq:extended_divisor_sequence_general} is
\begin{equation*}
0 \longrightarrow M = \ZZ^2 \longrightarrow \ZZ^5 \overset{D^S}\longrightarrow \LL^{S \vee} \simeq \ZZ^3 \longrightarrow 0,
\end{equation*}
where $\LL^{S \vee}$ is identified with $\ZZ^3$ via the dual basis of \eqref{eq:basis_L_extended_example1} and the linear map $D^S$ is given by the matrix
\begin{equation*}
D^S = \begin{bmatrix}
3 & 0 & 1 & 1 & 0 \\
-1 & 1 & -1 & 0 & 0 \\
1 & 0 & 0 & 0 & 1
\end{bmatrix}.
\end{equation*}
By \eqref{eq:extended_nef_cone_general} the extended nef cone is
\begin{equation*}
\Nef^S(\cX)=\mathrm{cone} \left\langle
\begin{bmatrix}
3 \\ -1 \\ 1
\end{bmatrix},
\begin{bmatrix}
3 \\ 0 \\ 1
\end{bmatrix},
\begin{bmatrix}
0 \\ 0 \\ 1
\end{bmatrix}
\right\rangle.
\end{equation*}
The extended Mori cone $\NE^S \subseteq \LL^S \otimes_\ZZ \RR$ is the dual of $\Nef^S$, so it is defined by the inequalities $3 l_1 - l_2 + k \geq 0$, $3 l_1 + k \geq 0$, $k \geq 0$. 

We will not write down a description of $\Lambda_\sigma^S$ for every cone $\sigma \in \Sigma$. We just mention, for example, that if $\sigma$ is the cone spanned by $\rho_2$ and $\rho_4$ then $\Lambda_{\sigma}^S$ is defined by the conditions $3 l_1 - l_2 + k \in \ZZ$, $l_1 - l_2 \in \ZZ$, $k \in \ZZ$. After few computations one finds that 
\begin{equation*}
\Lambda \rE^S= \left\{ \left. \begin{pmatrix}
3 l_1 - l_2 + k \\
l_2 \\
l_1 - l_2 \\
l_1 \\
k
\end{pmatrix} \in \RR^5 \right| 
\begin{matrix}
3 l_1 \in \ZZ, l_2 \in \ZZ, k \in \ZZ, \\
3 l_1 - l_2 + k \geq 0, \\
3 l_1 + k \geq 0, \\
k \geq 0
\end{matrix}
\right\}.
\end{equation*}
The extended reduction function  $v^S \colon \Lambda \rE^S \to \mathrm{Box}(\Sigma)$ is defined by
\begin{align*}
v^S (l_1, l_2, k) &= \lceil 3l_1 - l_2 +k \rceil \begin{pmatrix}
1 \\ -1
\end{pmatrix} + \lceil l_2 \rceil \begin{pmatrix}
0 \\ 1
\end{pmatrix} + \lceil l_1 - l_2 \rceil \begin{pmatrix}
-1 \\ 2
\end{pmatrix} + \\
&\quad \quad + \lceil l_1 \rceil \begin{pmatrix}
-2 \\ 1
\end{pmatrix} + \lceil k \rceil \begin{pmatrix}
-1 \\ 1
\end{pmatrix}  \\
&= 3 ( \lceil l_1 \rceil - l_1) \begin{pmatrix}
-1 \\ 1
\end{pmatrix}.
\end{align*}
Since $s_1 = \frac{1}{3} \rho_3 + \frac{1}{3} \rho_4$, we have that
the image of $(l_1, l_2, k) \in \LL^S \otimes_\ZZ \QQ$ in $\LL \otimes_\ZZ \QQ$ via the splitting \eqref{eq:isomorphism_splitting} is
\begin{equation} \label{eq:d_example_1}
d = \begin{bmatrix}
3 l_1 - l_2 + k \\ l_2 \\ l_1 - l_2 + \frac k 3 \\ l_1 + \frac k 3
\end{bmatrix} = \left( l_1 + \frac k 3 \right) \begin{bmatrix}
3 \\ 0 \\ 1 \\ 1
\end{bmatrix} + l_2 \begin{bmatrix}
-1 \\ 1 \\ -1 \\ 0
\end{bmatrix}.
\end{equation}
Therefore, the $S$-extended Novikov variable corresponding to $(l_1, l_2, k) \in \NE^S (\cX)$ is $\tilde{Q}^{(l_1, l_2,k)} = Q^d \xi^k$, where $d \in \NE(X)$ is given by \eqref{eq:d_example_1}.
To match notation with \eqref{eq:I-function}, set $\bfone_{(0,0)} = \mathbf{1}_0$, $\bfone_{(-1,1)} = \bfone_{2/3}$, $\bfone_{(-2,2)} = \bfone_{4/3}$. The Chern classes of the toric divisors are
\begin{align*}
u_1 &= 3p_1 - p_2, \\
u_2 &= p_2, \\
u_3 &= p_1 - p_2, \\
u_4 &= p_1.
\end{align*}
Following Remark \ref{rmk:version_I_function}, by \eqref{eq:I-function} the $S$-extended I-function $I^S(\tau_1, \tau_2, \xi;z)$ of $\cX$ is
\begin{align*}
I^S (\tau_1, \tau_2, \xi; z) &= z \rme^{(\tau_1 p_1 + \tau_2 p_2)/z} \times \\
&\quad \quad \times \sum_{(l_1, l_2, k) \in \Lambda \rE^S} \tilde{Q}^{(l_1, l_2,k)} \rme^{\tau_1 \left(l_1 + \frac{k}{3} \right) + \tau_2 l_2} \square_{(l_1, l_2, k)} \mathbf{1}_{v^S(l_1,l_2,k)},
\end{align*}
where
\begin{align*}
\square_{(l_1, l_2, k)} &= \ambientfactorwithoutnumerator{3l_1 - l_2 +k}{3p_1 - p_2} 
\ambientfactor{l_2}{p_2} \times \\
&\qquad \qquad \times \ambientfactor{l_1-l_2}{p_1-p_2} 
\ambientfactor{l_1}{p_1} \frac{1}{k! z^k}
\end{align*}

Now we want to study the asymptotic behaviour of $I^S$ with respect to the variable $z$. Note that if $(l_1, l_2, k) \in \Lambda \rE^S$ and $\deg_z \square_{(l_1, l_2, k)} \geq -1$ then either $(l_1, l_2, k) = (0,0,0)$ or $(l_1, l_2, k) = (- \frac 1 3 , 0, 1)$. Since $\tilde{Q}^{(- \frac 1 3 , 0, 1)} = \xi$ and $v^S(- \frac 1 3 , 0, 1) = (-1,1)$, we obtain
\begin{align*}
I^S(\tau_1, \tau_2, \xi ; z) &= z \rme^{(\tau_1 p_1 + \tau_2 p_2)/z} \left( \bfone_0 + z \inv \xi \bfone_{2/3} + \rO(z^{-2}) \right) \\
&= z \bfone_0 + \tau_1 p_1 + \tau_2 p_2 + \xi \mathbf{1}_{2/3} + \rO(z \inv),
\end{align*}
where $\rO(z \inv)$ denotes term of the form $\sum_{n=1}^\infty c_n z^{-n}$ with $c_n$ independent of $z$.

Since the J-function is the only point of the Givental cone of $\cX$ with the asymptotic expansion $z \bfone_0 + F(t) + O(z \inv)$, by the mirror theorem (Theorem \ref{thm:mirror_theorem}) we have that
\begin{equation*}
J (\tau_1 p_1 + \tau_2 p_2 + \xi \bfone_{2/3};z) = I^S (\tau_1, \tau_2, \xi; z)
\end{equation*}
for every $\tau_1, \tau_2, \xi \in \CC$. To obtain the quantum period of $\cX$, we have to set $z=1$, $\tau_1 = \tau_2 = 0$, $\xi = t^{\frac 1 3} x $, replace the Novikov variable $Q^d$ with $t^{-K_X \cdot d}$, and take the component along $\bfone_0$ of the $J$-function. In particular
\begin{equation*}
\tilde{Q}^{(l_1, l_2, k)} = Q^{(l_1 + \frac k 3, l_2)} \xi^k \mapsto t^{5 (l_1 + \frac k 3) - l_2} \left( x t^{\frac{1}{3}} \right)^k = x^k t^{5 l_1 - l_2 + 2k}.
\end{equation*}
Thus the quantum period of $X$ is
\begin{equation*}
G_X(x;t) = \sum\limits_{\substack{l_1, l_2, k \in \ZZ, \\  l_1 \geq l_2 \geq 0, \ k \geq 0, \\
3 l_1 + k \geq l_2}} \frac{1} { (3 l_1 - l_2 + k)! l_2! (l_1 - l_2)! l_1 ! k!  }x^k t^{5 l_1 - l_2 + 2k}.
\end{equation*}
The regularised quantum period of $X$ is
\begin{equation*}
\widehat{G}_X(x;t) = 1 + 2xt^2 + (12+6x^2) t^4 + 20 t^5 + (120 x + 20 x^3) t^6 + \dots
\end{equation*}
and coincides with the classical period of the $1$-parameter family of Laurent polynomial 
\begin{equation*}
f_a(x,y) = \frac{y^2}{x} + \frac{y}{x^2} + \frac{x}{y} + y + a \frac{y}{x}
\end{equation*}
associated to the 
polygon n.25, after identifying the parameter $x$ in $\widehat{G}_X$ with the parameter $a$.

\section{Toric complete intersections} \label{sec:toric_complete_intersections}

\subsection{The quantum Lefschetz principle}\label{sec:quantum_Lef_principle}
The Gromov--Witten invariants of a complete intersection are governed by the so-called `quantum Lefschetz principle', which was formulated and proven by Coates and Givental \cite{quantum_lefschetz_tom_givental} in the case of smooth projective varieties (see also \cite{lee} and \cite{kim}). It has been shown that this principle fails for some positive line bundles on some orbifolds \cite{quantum_lefschetz_can_fail}, but there is evidence that the quantum Lefschetz principle holds in cases which are sufficient for us. We will be more precise below.

Firstly we have to define twisted Gromov--Witten invariants. Let $\cY$ be a proper smooth Deligne--Mumford stack over $\CC$ with projective coarse moduli space $Y$ and let $\cE$ be a vector bundle on $\cY$. If $n \in \NN$ and $d \in \mathrm{Eff}(\cY)$, the universal family over the moduli space of stable maps
\begin{equation*}
\xymatrix{\cC_{0,n,d} \ar[r]^{\mathrm{ev}} \ar[d]_\pi & \cY \\
\cY_{0,n,d}
}
\end{equation*}
induces an element $\cE_{0,n,d} := \pi_! \mathrm{ev}^\star \cE$ of the K-theory of $\cY_{0,n,d}$. The torus $\CC^*$ acts on $\cE$ rotating the fibres and leaving the base $\cY$ invariant. This action induces an action of $\CC^*$ on $\cE_{g,n,d}$. Let $\bfe$ be the $\CC^*$-equivariant Euler class, which is invertible over the field of fractions $\QQ(\kappa)$ of $\rH^\bullet_{\CC^*}( \mathrm{pt} ; \QQ) = \rH^\bullet (\rB \CC^*; \QQ) = \QQ[\kappa]$, where $\kappa$ is the equivariant parameter given by the first Chern class of the line bundle $\cO(1)$ on $\CC \PP^\infty = \rB \CC^*$. \emph{$\cE$-twisted Gromov--Witten invariant} are defined by
\begin{equation*}
\left\langle \al_1 \psi^{k_1}, \dots, \al_n \psi^{k_n} \right\rangle_{0,n,d}^{\mathrm{tw}} := \int_{[\cY_{0,n,d}]^\mathrm{vir}} \bfe (\cE_{0,n,d}) \cup \prod_{i=1}^n \left( \mathrm{ev}^\star_i(\al_i) \cup \psi_i^{k_i} \right) \in \QQ(\kappa),
\end{equation*}
for $\al_1, \dots, \al_n \in \chenruan(\cY)$ and non-negative integers $k_1, \dots, k_n$.
The inertia stack $\rI \cE$ of the total space of the vector bundle $\cE \to \cY$ is a vector bundle over $\rI \cY$: the fibre of $\rI \cE$ over the point $(y,g)$ is the $g$-fixed subspace of the fibre of $\cE$ over $y$. One can define the twisted Poincar\'e pairing
\begin{equation*}
(\al , \beta)^\mathrm{tw}_\mathrm{CR} := \int_{\rI \cX} \al \cup \mathrm{inv}^\star \beta \cup \bfe (\rI \cE) \in \QQ[\kappa], \qquad \al, \beta \in \rH^\bullet_\mathrm{CR}(\cY)
\end{equation*}
and the twisted symplectic form
\begin{equation}
\Omega^\mathrm{tw} (f,g) := - \mathrm{Res}_{z= \infty} \big( f(-z), g(z) \big)^\mathrm{tw}_\mathrm{CR} \rmd z
\end{equation}
on $\cH_\cY \otimes_\QQ \QQ(\kappa)$, where $\cH_\cY$ is defined in \eqref{eq:symplectic_vector_space_H}. In the symplectic vector space $(\cH_\cY \otimes_\QQ \QQ(\kappa), \Omega^\mathrm{tw})$ there is a Lagrangian submanifold, which is a formal germ of a cone with vertex at the origin and which encodes all genus-zero Euler-twisted Gromov--Witten invariants of $\cX$: it is called the \emph{twisted Givental cone} and is denoted by $\cL_\cY^\mathrm{tw}$. We will not give a precise definition of $\cL_\cY^\mathrm{tw}$ here, referring the reader to \cite{tseng_orbifold_quantum_riemann_roch}, \cite{computing_twisted}. $\cL_\cY^\mathrm{tw}$ determines and is determined by Givental's \emph{twisted J-function}:
\begin{equation}
J_\cY^\mathrm{tw} (\gamma,z) = z + \gamma + \sum_{d \in \mathrm{Eff}(\cX)} \sum_{n=0}^\infty \sum_{k=0}^\infty \sum_{\epsilon = 1}^N \frac{Q^d}{n!}
\left\langle \gamma, \dots, \gamma, \phi^\epsilon_\bfe \psi^k \right\rangle_{0, n+1, d}^\mathrm{tw} \phi_\epsilon z^{-k-1},
\end{equation}
where:
\begin{itemize}
\item $\gamma$ runs in the even part $\rH^\bullet_{\rC \rR} (\cY)$ of the Chen-Ruan orbifold cohomology  of $\cY$;
\item $\{ \phi_1, \dots \phi_N \}$ and $\{ \phi^1_\bfe, \dots \phi^N_\bfe \}$ are homogeneous bases of the $\QQ(\kappa)$-vector space $\chenruan(\cY) \otimes_\QQ \QQ(\kappa)$ which are dual with respect to the twisted Poincar\'e pairing $( \cdot, \cdot )_\mathrm{CR}^\mathrm{tw}$.
\end{itemize}
The cone $\cL_\cY^\mathrm{tw}$ determines the twisted J-function because $J_\cY^\mathrm{tw}(\gamma, -z)$ is the unique point on $\cL_\cY^\mathrm{tw}$ of the form $-z + \gamma + \rO(z \inv)$.
Actually $\cL_\cY^\mathrm{tw}$ is a family of cones in $\cH_\cY$ and $J_\cY^\mathrm{tw}$ is a family of elements in $\cH_\cY$. Both families are parameterised by the equivariant parameter $\kappa$ in some open set of $\AA^1_\CC$.

\medskip

Now we are going to say what is the relationship between $\cE$-twisted Gromov--Witten invariants of $\cY$ and the ordinary Gromov--Witten invariants of the zero locus $\cX$ of a generic global section of $\cE$. Before doing that, we introduce the class of convex vector bundles.

The vector bundle $\cE$ over $\cY$ is \emph{convex}  if $\rH^1(\cC, f^\star \cE) = 0$ for all genus-zero $n$-pointed stable maps $f \colon \cC \to \cY$, for any $n$. If $\cE$ is convex, then $\rR^1 \pi_\star \mathrm{ev}^\star \cE = 0$  and by cohomology and base change $\cE_{0,n,d}$ is the class of the vector bundle  $\pi_\star \mathrm{ev}^\star \cE$ over $\cY_{0,n,d}$, for all $n \in \NN$ and $d \in \mathrm{Eff}(\cY)$. Therefore, every $\cE$-twisted Gromov--Witten invariant lies in $\QQ[\kappa]$.
A line bundle $\cE$ on $\cY$ is convex if and only if it is the pull-back of a nef line bundle from the coarse moduli space $Y$ (see \cite{quantum_lefschetz_can_fail}).

Now we consider the following setup:
\begin{itemize}
\item[($\dagger$)] $\cY$ is a proper smooth Deligne--Mumford stack over $\CC$ with projective coarse moduli space; $\cE$ is a vector bundle on $\cY$ and $i \colon \cX \into \cY$ is the closed substack defined by a regular section of $\cE$;  $\iota^\star \colon \chenruan(\cY) \to \chenruan(\cX)$ is the pull-back defined by the inclusion $\iota \colon \rI \cX \into \rI \cY$; $J^\mathrm{tw}_\cY$ is the $\cE$-twisted J-function of $\cY$ and $J_\cX$ is the non-twisted J-function of $\cX$; $\cL_\cY^\mathrm{tw}$ is the $\cE$-twisted Givental cone of $\cY$ and $\cL_\cX$ is the non-twisted Givental cone of $\cX$.
\end{itemize}

Under the hypothesis that the vector bundle $\cE$ is convex, the following theorem relates the Gromov--Witten invariants of the complete intersection to the twisted invariants of the ambient.

\begin{theorem}[\cite{iritani_periods, quantum_lefschetz_tom}] \label{thm:tom_quantum_lefschetz}
Let $\cY, \cE, \cX$ be as in $(\dagger)$. If $\cE$ is convex, then the non-equivariant limit $\lim_{\kappa \to 0} J^\mathrm{tw}_\cY $ is well-defined and satisfies:
\begin{equation*}
\iota^\star \left( \lim_{\kappa \to 0} J_\cY^\mathrm{tw} (\gamma) \right) = J_\cX (\iota^\star \gamma)
\end{equation*}
for all $\gamma \in \chenruan(\cY)$. Moreover, if $I^\mathrm{tw}$ is a point of $\cL^\mathrm{tw}_\cY$, then the non-equivariant limit $I_{\cX, \cY} := \iota^\star \left( \lim_{\kappa \to 0} I^\mathrm{tw} \right)$ is well-defined and lies in $\cL_\cX$.
\end{theorem}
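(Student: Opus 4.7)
The plan is to leverage convexity to interpret $\cE_{0,n,d}$ as a genuine vector bundle on $\cY_{0,n,d}$, then compare the virtual fundamental class of $\cX_{0,n,d}$ with an Euler-class capped version of that of $\cY_{0,n,d}$, and finally translate this virtual intersection-theoretic statement into Givental's symplectic formalism.

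First I would establish that the non-equivariant limit is well-defined. As noted in the text, convexity gives $\rR^1 \pi_\star \mathrm{ev}^\star \cE = 0$, so $\cE_{0,n,d} = \pi_\star \mathrm{ev}^\star \cE$ is an honest vector bundle on $\cY_{0,n,d}$ and not merely a virtual class. The $\CC^\star$-equivariant Euler class $\bfe(\cE_{0,n,d})$ is then a polynomial in the equivariant parameter $\kappa$ whose constant term is the ordinary top Chern class $c_\mathrm{top}(\cE_{0,n,d})$. Each coefficient of $J_\cY^\mathrm{tw}$ is therefore a polynomial in $\kappa$, and the limit $\kappa \to 0$ makes sense termwise.

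Next I would identify $\cX_{0,n,d}$ with the zero locus of a tautological section. The regular section $s$ of $\cE$ cutting out $\cX$ induces a section $\tilde s$ of $\cE_{0,n,d}$ whose (virtual) vanishing locus is $\cX_{0,n,d}$. The functoriality of the virtual fundamental class (as in Kim--Kresch--Pantev for schemes, with the DM-stack extension built from the machinery of Abramovich--Graber--Vistoli) then yields the crucial identity
\begin{equation*}
j_\star [\cX_{0,n,d}]^\mathrm{vir} = c_\mathrm{top}(\cE_{0,n,d}) \cap [\cY_{0,n,d}]^\mathrm{vir},
\end{equation*}
where $j \colon \cX_{0,n,d} \to \cY_{0,n,d}$ is the natural morphism of moduli stacks; this reduces ordinary Gromov--Witten invariants of $\cX$ with insertions pulled back from $\cY$ to Euler-twisted invariants of $\cY$.

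The main work, and the step where I expect the most bookkeeping, is to promote this numerical identity to the statement at the level of the full J-function and the whole cone. For $\gamma \in \chenruan(\cY)$, one inserts $\iota^\star \gamma$ into invariants of $\cX$, uses the projection formula for $j$ together with the identity above, and matches with insertions of $\gamma$ into the twisted invariants of $\cY$; here it is essential that the $\psi$-classes on $\cX_{0,n,d}$ agree with pull-backs from $\cY_{0,n,d}$ and that the evaluation maps fit into a commutative square $\rI \cX \to \rI \cY$ refining $\iota$. Summing over $n$ and $d$ and repackaging in Givental's notation yields $\iota^\star \lim_{\kappa \to 0} J_\cY^\mathrm{tw}(\gamma) = J_\cX(\iota^\star \gamma)$. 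For the cone version, I would invoke the characterisation of $\cL_\cY^\mathrm{tw}$ and $\cL_\cX$ by a common system of universal polynomial (Givental-type) tangency/overruled-ness conditions: these conditions for $\cL_\cY^\mathrm{tw}$, when one takes $\kappa \to 0$ and applies $\iota^\star$, specialise exactly to the defining conditions for $\cL_\cX$, where the necessary interaction of $\iota^\star$ with the twisted Poincar\'e pairing uses that $\bfe(\rI \cE)|_{\kappa=0}$ is the ordinary Euler class of the restriction of $\cE$ to the inertia, which recovers the excess intersection identifying the two pairings up to pull-back. The main obstacle I anticipate is the careful bookkeeping of ages and twisted sectors: one has to verify age-by-age that the inertia-stack components of $\cX$ map into matching components of $\cY$ with the correct degree shifts and pairings, so that the specialisation of the twisted symplectic structure on $\cH_\cY$ restricts to the symplectic structure on $\cH_\cX$ used to define $\cL_\cX$.
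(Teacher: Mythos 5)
This theorem is not proved in the paper: it is quoted from \cite{iritani_periods, quantum_lefschetz_tom} and used as a black box, so there is no internal argument to compare yours against. Your outline does follow the strategy of those references -- convexity makes $\cE_{0,n,d}=\pi_\star\mathrm{ev}^\star\cE$ an honest bundle, the functoriality of virtual classes (Kim--Kresch--Pantev, transported to the Abramovich--Graber--Vistoli setting) converts twisted invariants of $\cY$ with pulled-back insertions into invariants of $\cX$, and the compatibility $(\iota^\star\alpha,\iota^\star\beta)_{\mathrm{CR}}=(\alpha,\beta)^{\mathrm{tw}}_{\mathrm{CR}}\big\vert_{\kappa=0}$ via $\bfe(\rI\cE)$ is the right bookkeeping for the pairings. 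But as a proof of the statement as written, the sketch has genuine gaps beyond compression.

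First, the virtual-class identity must read $\sum_{d'\colon i_\star d'=d} j_\star[\cX_{0,n,d'}]^{\mathrm{vir}}=c_{\mathrm{top}}(\cE_{0,n,d})\cap[\cY_{0,n,d}]^{\mathrm{vir}}$: several curve classes of $\cX$ push forward to the same class of $\cY$, which is exactly why the theorem only holds after the substitution $Q^\delta\mapsto Q^{i_\star\delta}$ recorded in the Remark following it. Second, and more seriously, functoriality plus the projection formula only determines the pairings $\bigl(J_\cX(\iota^\star\gamma),\iota^\star\phi\bigr)$ for $\phi\in\chenruan(\cY)$, i.e.\ the orthogonal projection of $J_\cX(\iota^\star\gamma)$ onto the ambient part $\iota^\star\chenruan(\cY)$; the left-hand side of the asserted identity lies in the ambient part by construction, so one must also show that $J_\cX(\iota^\star\gamma)$ has no component orthogonal to it. This is false degree-by-degree in the Novikov ring of $\cX$ (for a cubic surface in $\PP^3$, one-pointed invariants of individual lines pair nontrivially with primitive divisor classes) and only becomes true after summing over $d'\mapsto d$; the references supply this via a separate input (closure of the ambient part under the quantum connection, or monodromy invariance of the summed classes), for which your sketch has no substitute. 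Finally, two smaller points: the well-definedness of $\lim_{\kappa\to 0}J^{\mathrm{tw}}_\cY$ is not just ``the Euler class is a polynomial in $\kappa$'', because the dual basis $\phi^\epsilon_{\bfe}$ with respect to the twisted pairing acquires poles at $\kappa=0$ -- one needs the standard kernel-bundle argument comparing $\bfe(\cE_{0,n+1,d})$ with $\mathrm{ev}_{n+1}^\star\bfe(\rI\cE)$; and the passage from the J-function to an arbitrary point $I^{\mathrm{tw}}$ of the cone requires the overruled-cone structure theory (every point of $\cL^{\mathrm{tw}}_\cY$ lies in $zT$ for a tangent space $T$ met along the J-function), which ``universal tangency conditions'' gestures at but does not carry out.
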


Without the convexity hypothesis, it is conjectured that there is some relation between invariants of the complete intersection and twisted invariants of the ambient.

\begin{conjecture}[Coates--Corti--Iritani--Tseng \cite{private_communication}, cf. \cite{quantum_lefschetz_can_fail}] \label{conj:tom_quantum_lefschetz}
Let $\cY, \cE, \cX$ be as in $(\dagger)$. Let $I^\mathrm{tw}$ be a point of $\cL_\cY^\mathrm{tw}$ such that the non-equivariant limit $I_{\cX, \cY} := \lim_{\kappa \to 0} \iota^\star I^\mathrm{tw}$ is well-defined. Then $I_{\cX, \cY}$ lies in $\cL_\cX$.
\end{conjecture}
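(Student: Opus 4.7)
The plan is to reduce the conjecture to comparing the virtual fundamental classes $[\cX_{0,n,d}]^{\mathrm{vir}}$ and $\iota^\star \bigl( \bfe(\cE_{0,n,d}) \cap [\cY_{0,n,d}]^{\mathrm{vir}} \bigr)$ after taking the non-equivariant limit. When $\cE$ is convex, this comparison is exactly the content of Theorem~\ref{thm:tom_quantum_lefschetz}; without convexity the K-theoretic class $\cE_{0,n,d}$ is a genuine virtual bundle whose equivariant Euler class is rational in $\kappa$, so the $\kappa \to 0$ limit need not exist term-by-term. The hypothesis that $\lim_{\kappa \to 0} \iota^\star I^{\mathrm{tw}}$ exists must therefore be used as a global constraint on the point of the cone, rather than coefficient by coefficient.

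First, I would invoke the abstract characterisation of the Givental cone: by Coates--Givental and its orbifold extension due to Tseng, a formal germ $f \in \cH_\cX$ lies in $\cL_\cX$ if and only if it satisfies a precise system of recursions arising from $\psi$-class pullback and topological recursion relations. Twisted invariants of $\cY$ satisfy the analogous relations defining $\cL_\cY^{\mathrm{tw}}$, with the twisting class of $\cE_{0,n,d}$ inserted. If one can show that the pullback $\iota^\star$ intertwines these two systems after passing to the $\kappa \to 0$ limit, whenever that limit is defined, then the image automatically lies in $\cL_\cX$ and the conjecture follows. The proof would proceed by establishing compatibility between $\iota^\star$, the non-equivariant limit, and the operations underlying the recursion, together with a rigorous justification that Novikov-variable-wise divergences in individual invariants cancel globally under the hypothesis of the conjecture.

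The hardest step is controlling the contributions of $\rR^1 \pi_\star \mathrm{ev}^\star \cE$, the obstruction sheaf responsible for the failure of convexity, and showing that their formal singularities in $\kappa$ cancel against those coming from $\pi_\star \mathrm{ev}^\star \cE$ precisely when the hypothesis on $I^{\mathrm{tw}}$ holds. A promising strategy combines the cosection localisation of Kiem and Li with the $p$-field construction of Chang and Li, which realises $[\cX_{0,n,d}]^{\mathrm{vir}}$ directly from moduli of stable maps to $\cY$ decorated with sections of $\cE^\vee \otimes \omega$; an alternative uses Manolache's virtual pullback formalism, adapted so as to track the formal $\kappa$-expansion of obstructions when $\cE$ has a nontrivial concave part. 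Either strategy requires genuinely new input beyond the arguments of \cite{quantum_lefschetz_tom_givental, quantum_lefschetz_tom}, and a complete understanding of precisely when the non-equivariant limit is automatically geometric rather than merely formal; this is the main reason the statement remains conjectural, and is also the reason that the partial evidence collected in the present paper, consistent with Conjecture~\ref{conj:conj_B}, is significant.
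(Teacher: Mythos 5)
There is no proof of this statement in the paper to compare against: Conjecture~\ref{conj:tom_quantum_lefschetz} is stated as an open conjecture, attributed to Coates--Corti--Iritani--Tseng, and the paper's main results (Theorem~\ref{thm:main_result}) are explicitly \emph{contingent} on it. Your text is likewise not a proof but a research plan, and you concede as much in your final sentence. So the honest verdict is that the gap is the entire argument: no step of your outline is actually carried out, and the steps you identify as hardest are exactly the ones that have resisted proof. Concretely: (1) you assert that a point of $\cL_\cX$ is characterised by ``a precise system of recursions'' and that one need only check $\iota^\star$ intertwines the twisted and untwisted systems after the $\kappa\to 0$ limit --- but the known characterisations of the cone (via the quantisation formalism or topological recursion) do not interact simply with $\iota^\star$, precisely because without convexity the comparison of virtual classes $[\cX_{0,n,d}]^{\mathrm{vir}}$ versus $\bfe(\cE_{0,n,d})\cap[\cY_{0,n,d}]^{\mathrm{vir}}$ fails at the level of individual moduli spaces; the counterexamples in \cite{quantum_lefschetz_can_fail} show that the naive statement is false for some convex-looking line bundles on orbifolds, so any correct argument must explain exactly which geometric mechanism is restored by the hypothesis that $\lim_{\kappa\to 0}\iota^\star I^{\mathrm{tw}}$ exists. (2) Your proposed ``global cancellation of Novikov-variable-wise divergences'' is named but not exhibited; the hypothesis of the conjecture is a statement about one point $I^{\mathrm{tw}}$ of the cone, and you give no mechanism by which a condition on a single point controls the $\rR^1\pi_\star\mathrm{ev}^\star\cE$ contributions on all the moduli spaces entering the definition of $\cL_\cX$. (3) The appeal to cosection localisation, $p$-fields, or virtual pullback is a reasonable place to look, but as stated it is a list of techniques, not an argument.

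What you have written is a fair account of \emph{why} the statement is hard and of the shape a proof would likely take, and your observation that the hypothesis must be used globally rather than coefficient-by-coefficient is the right instinct. But since the paper itself treats this as a conjecture and offers only the convex case (Theorem~\ref{thm:tom_quantum_lefschetz}) as established, your submission should be presented as a discussion of the conjecture's status, not as a proof attempt; as a proof it is not salvageable without the new input you yourself say is required.
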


\begin{remark}
In Theorem \ref{thm:tom_quantum_lefschetz} and Conjecture \ref{conj:tom_quantum_lefschetz} we have applied the homomorphism $Q^\delta \mapsto Q^{i_\star \delta}$ to the Novikov ring of $\cX$.
\end{remark}

\subsection{Quantum Lefschetz for toric orbifolds} \label{sec:Lefschetz_toric_orbifolds}

Here we discuss twisted Gromov--Witten invariants of toric orbifolds. We maintain all the notations we used in \S \ref{sec:stacky_fans}. In particular, we assume that $\cY$ is a toric well-formed orbifold coming from the stacky fan $(N, \Sigma, \rho)$, where $N$ is a finitely generated free abelian group, $\Sigma$ is a rational simplicial fan in $N_\RR$ and $\rho$ is the ray map of $\Sigma$. We denote by $Y$ the toric variety that is the coarse moduli space of $\cY$. 
We also use the formalism of $S$-extended stacky fans introduced  in \S\ref{sec:stacky_fans}, where $S$ is  a finite set with a map $S \to N$.

Let $\cE_1, \dots, \cE_r$ be line bundles on $\cY$. Consider the vector bundle $\cE = \cE_1 \oplus \cdots \oplus \cE_r$ on $\cY$ and  choose $\epsi_1, \dots, \epsi_r \in \LL^{S \vee}$ such that their images $E_1, \dots, E_r$ in $\LL^\vee$ are the first Chern classes of $\cE_1, \dots, \cE_r$.
The \emph{$S$-extended $\cE$-twisted I-function} (see \cite{some_applications}) of $\cY$ is:
\begin{gather} \label{eq:twisted_I_function}
I^S_\cE(\tau, \xi; z) := z \rme^{\sum_{i=1}^n u_i \tau_i / z} \times \\\sum_{\lambda \in \Lambda \rE^S} \tilde{Q}^\lambda \rme^{\lambda t} \left( \prod_{i=1}^{n+m}  \ambientfactor{\lambda_i}{u_i} \right) \left( \prod_{j=1}^r 
\frac{
\prod\limits_{\substack{a \leq \epsi_j \cdot \lambda \\ \langle a \rangle = \langle \epsi_j \cdot \lambda \rangle}} (\kappa + E_j + az)
}{
\prod\limits_{\substack{a \leq 0 \\ \langle a \rangle = \langle \epsi_j \cdot \lambda \rangle}} (\kappa + E_j + az)
} 
\right)\bfone_{v^S(\lambda)}, \nonumber
\end{gather}
where:
\begin{itemize}
\item $\kappa$ is the equivariant parameter;
\item $\tau = (\tau_1, \dots, \tau_n)$ are formal variables;
\item $\xi = (\xi_1, \dots, \xi_m)$ are formal variables;
\item for $1 \leq i \leq n$, $u_i \in \rH^2(\cY;\QQ)$ is the first Chern class of the the line bundle corresponding to the $i$th toric divisor $D_i$;
\item for $n+1 \leq i \leq n+m$, $u_i$ is defined to be zero;
\item for $\lambda \in \Lambda \rE^S$,
\begin{equation*}
\tilde{Q}^\lambda = Q^d \xi_1^{k_1} \cdots \xi_m^{k_m} \in \boldsymbol{\Lambda} [ \! [ \xi_1, \dots, \xi_m ] \! ],
\end{equation*}
where $d \in \LL \otimes_\ZZ \QQ$ and $k \in \NN^m$ are such that that $\lambda$ corresponds to $(d,k)$ via \eqref{eq:isomorphism_splitting} and $Q^d$ denotes the representative of $d \in \mathrm{Eff}(\cY)$ in the Novikov ring $\boldsymbol{\Lambda}$;
\item for $\lambda \in \Lambda \rE^S$, $\rme^{\lambda t} := \prod_{i=1}^n \rme^{(p_i \cdot d) \tau_i}$;
\item for $\lambda \in \Lambda \rE^S$, $\mathbf{1}_{v^S(\lambda)} \in \rH^0 (\cY_{v^S(\lambda)} ; \QQ) \subseteq \rH^{2 \mathrm{age}(v^S(\lambda))}_\mathrm{CR}(\cY)$ is the unit class supported on the component of inertia associated to $v^S(\lambda) \in \mathrm{Box}(\Sigma)$.
\end{itemize}

Note that $I^S_\cE$ depends on the choice of the liftings $\epsi_j$ of $E_j = c_1(\cE_j) \in \LL^\vee$ to $\LL^{S \vee}$.

\begin{remark} \label{rmk:version_twisted_I_function}
For the twisted I-function \eqref{eq:twisted_I_function} we use the same substitutions as in Remark \ref{rmk:version_I_function}.
\end{remark}

\begin{theorem}[Twisted mirror theorem for toric stacks \cite{some_applications}] \label{thm:twisted_mirror_theorem}
Let $Y$ be a projective simplicial toric variety, associated to the fan $\Sigma$ in the lattice $N$, and let $\cY$ be the corresponding toric well-formed orbifold. Let $S$ be a finite set equipped with a map to $N$. Let $\cE_1, \dots, \cE_r \in \Pic(\cY)$ be line bundles and let $\cE = \cE_1 \oplus \cdots \oplus \cE_r$. 

For any choice of the liftings of $E_j = c_1 (\cE_j)$ to $\LL^{\vee S}$,  the $S$-extended $\cE$-twisted I-function $I^S_\cE(\tau, \xi, -z)$ lies in the $\cE$-twisted Givental cone $\cL_\cY^\mathrm{tw}$ for all values of the parameters $\tau$ and $\xi$.
\end{theorem}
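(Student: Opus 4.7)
The plan is to deduce this twisted mirror theorem from the untwisted mirror theorem (Theorem \ref{thm:mirror_theorem}) by composing with the symplectic transformation that relates the twisted and untwisted Givental cones, as provided by Tseng's orbifold quantum Riemann--Roch theorem. The key point is that, on the I-function side, this symplectic transformation acts diagonally on the components of $\rI \cY$ and, after combining the age shifts with the hypergeometric Bernoulli series, produces exactly the ratios of shifted products that appear on the right-hand side of the definition \eqref{eq:twisted_I_function} of $I^S_\cE$.

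First, I would invoke Theorem \ref{thm:mirror_theorem} to obtain that, for every choice of parameters, $I^S(\tau,\xi,-z)$ lies on the untwisted Givental cone $\cL_\cY$. Next, I would quote the orbifold quantum Riemann--Roch theorem, which for a convex or merely $K$-theoretic class $\cE = \cE_1\oplus\cdots\oplus\cE_r$ gives an explicit symplectic operator $\Delta_\cE$ on $\cH_\cY\otimes_\QQ\QQ(\kappa)$ such that $\cL_\cY^{\mathrm{tw}} = \Delta_\cE\,\cL_\cY$. The operator $\Delta_\cE$ decomposes as a direct sum over $b\in\BBox(\Sigma)$, and on the component labelled by $b$ it acts by multiplication by an expression built from Bernoulli series in the Chern roots of $\cE_j|_{\cY_b}$ and in the equivariant parameter $\kappa$. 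Applying $\Delta_\cE$ to $I^S$ is therefore a term-by-term operation on the sum over $\lambda\in\Lambda\rE^S$ in \eqref{eq:I-function}.

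The bulk of the work is the following explicit matching. For a fixed $\lambda\in\Lambda\rE^S$ with $v^S(\lambda)=b$, and for each line bundle $\cE_j$ with lift $\epsi_j\in\LL^{S\vee}$, one computes the age of $\cE_j$ at $\cY_b$ in terms of $\lambda$; in the toric setting this age is exactly the fractional part $\langle\epsi_j\cdot\lambda\rangle$, which is forced by compatibility between the Gale dual and the Box description of $\rI\cY$ (\cite[Proposition 4.7]{BCS_toric_stacks}). Using the standard Gamma-function identity
\begin{equation*}
\exp\!\left(\sum_{l\ge 0} s_l\,\mathrm{ch}_l(L)\right) \;=\; \frac{\prod_{a\le 0,\;\langle a\rangle=\langle c\rangle}(\kappa+x+az)}{\prod_{a\le c,\;\langle a\rangle=\langle c\rangle}(\kappa+x+az)}^{-1}
\end{equation*}
on each line summand $\cE_j$ with $c=\epsi_j\cdot\lambda$ and $x=E_j$, the action of $\Delta_\cE$ on the $\lambda$-summand of $I^S$ produces precisely the hypergeometric factor in \eqref{eq:twisted_I_function}. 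Summing over $\lambda$ yields $\Delta_\cE(I^S)=I^S_\cE$, so $I^S_\cE(-z)\in\Delta_\cE\,\cL_\cY=\cL_\cY^{\mathrm{tw}}$.

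The main obstacle is this bookkeeping step: keeping track of how the age of $\cE_j$ on each twisted sector $\cY_b$ interacts with the Coates--Givental twisting series, and verifying that the resulting infinite products regularise to the advertised finite ratios of linear factors after cancellation. Once this identity is established, independence of the result from the choice of liftings $\epsi_j$ is automatic, since changing the lifting multiplies $\tilde Q^\lambda$ by a factor that is absorbed into the reparametrisation of the Novikov and $\xi$-variables, leaving the cone membership intact.
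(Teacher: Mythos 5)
First, a remark on the comparison you asked for: the paper does not prove Theorem~\ref{thm:twisted_mirror_theorem} at all --- it is imported verbatim from \cite{some_applications} --- so there is no internal proof to measure your attempt against. Your overall strategy (combine the untwisted mirror theorem, Theorem~\ref{thm:mirror_theorem}, with Tseng's orbifold quantum Riemann--Roch) is indeed the philosophy behind the proof in the cited reference, but as written your argument has a genuine gap at its central step, namely the claim that $\Delta_\cE(I^S) = I^S_\cE$.

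The quantum Riemann--Roch operator $\Delta_\cE$ is a linear symplectomorphism of $\cH_\cY \otimes_\QQ \QQ(\kappa)$ which acts, on each component $\cY_b$ of $\rI\cY$, by multiplication by a fixed class built from Bernoulli series in $E_j\vert_{\cY_b}$, the age of $\cE_j$ at $b$, $z$ and $\kappa$; in particular it is independent of the Novikov degree. The hypergeometric factor in \eqref{eq:twisted_I_function}, by contrast, depends on $\lambda$ through the actual value of $\epsi_j\cdot\lambda$ and not merely through its fractional part: already on the untwisted sector it reduces to $\prod_{a=1}^{\epsi_j\cdot\lambda}(\kappa+E_j+az)$, whose number of factors grows with the degree. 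A degree-independent multiplication operator cannot produce such a factor, so $\Delta_\cE(I^S)\neq I^S_\cE$ and the proposed ``term-by-term matching'' cannot succeed; the Gamma-function identity you invoke is being asked to do something it cannot do. The genuine content of the deduction from quantum Riemann--Roch is the Coates--Givental-style argument that is missing here: one characterises points of $\Delta_\cE\cL_\cY$ through the ruling of the cone by its tangent spaces and the divisor equation, and shows that the hypergeometric modification preserves that characterisation because each linear factor $\kappa+E_j+az$ is realised by an operator of the form $\kappa+E_j+zQ\partial_Q$ acting on a point of the cone. A secondary inaccuracy: different liftings $\epsi_j$ give genuinely different families on the twisted cone rather than reparametrisations of one another --- the paper exploits exactly this in \S\ref{subsec:example_3}, where the asymptotic behaviour in $z$ and the existence of the non-equivariant limit depend on the choice of lifting --- so the validity of the statement for every lifting is part of what must be proved, not something that is ``automatic''.
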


\begin{remark}
The theorem above is useful when computing Gromov--Witten invariants of complete intersections in toric orbifolds. 
Assume we are in the situation of Theorem \ref{thm:twisted_mirror_theorem}. Let $i \colon \cX \into \cY$ be the zero locus of a regular section of $\cE$ and let $\iota^\star \colon \chenruan(\cY) \to \chenruan(\cX)$ be the pull-back defined by the inclusion $\iota \colon \rI \cX \into \rI \cY$. 
\begin{enumerate}
\item[(i)] Suppose that $I^S_\cE (\tau, \xi, z) = z \bfone_0 + F(\tau, \xi) + \rO(z \inv)$ and the line bundles $\cE_1, \dots, \cE_r$ are convex. Then $I^S_\cE$ determines the $\cE$-twisted J-function of $\cY$. We may apply Theorem \ref{thm:tom_quantum_lefschetz} to obtain the J-function of $\cX$. 
\item[(ii)] Assume that the non-equivariant limit $I_{\cX, \cY} := \lim_{\kappa \to 0} \iota^\star I^S_\cE$ is well-defined. If Conjecture \ref{conj:tom_quantum_lefschetz} holds and $I_{\cX, \cY} = z \bfone_0 + F(\tau, \xi) + \rO(z \inv)$, then $I_{\cX, \cY}$ determines the J-function of $\cX$.
\end{enumerate}
In \S\ref{subsec:example_2} and \S\ref{subsec:example_3} we give two examples of this.
\end{remark}

\subsection{Example of a toric complete intersection: $X_{2, 8/3}$} \label{subsec:example_2}

In the lattice $N = \ZZ^3$ consider the polytope such that its vertices are the columns of the matrix
\begin{equation*}
\rho = \begin{bmatrix}
 1 & 0 & 0 & -1 & 3 \\
 0 & 1 & 0 & -1 & 3 \\
 0 & 0 & 1 & -1 & 2 \\
\end{bmatrix}
\end{equation*}
and consider its spanning fan $\Sigma$. It contains six $3$-dimensional cones: $\sigma_{235}$, $\sigma_{234}$, 
$\sigma_{135}$, $\sigma_{134}$, $\sigma_{125}$, $\sigma_{124}$, where $\sigma_{ijk}$ is the cone spanned by the the $i$th, the $j$th and the $k$th columns of $\rho$. Let $\cY$ be the toric orbifold associated to the stacky fan $(N, \Sigma, \rho)$.

The rows of the matrix
\begin{equation*}
D = \begin{bmatrix}
 1 & 1 & 1 & 1 & 0  \\
  0 & 0 & 1 & 3 & 1  \\
\end{bmatrix}.
\end{equation*}
constitute a basis of $\LL = \ker(\rho \colon \ZZ^5 \to N = \ZZ^3)$. Therefore, the fan sequence \eqref{eq:fan_sequence_general} is
\begin{equation*}
0 \longrightarrow \LL \simeq \ZZ^2 \overset{^t \! D}\longrightarrow \ZZ^5 \overset{\rho}\longrightarrow N = \ZZ^3 \longrightarrow 0
\end{equation*}
and the divisor sequence \eqref{eq:divisor_sequence_general} is
\begin{equation*}
0 \longrightarrow M = \ZZ^3 \overset{ ^t \! \rho}\longrightarrow \ZZ^5 \overset{D}\longrightarrow \LL^\vee \simeq \ZZ^2 \longrightarrow 0.
\end{equation*}
Let $\{ p_1, p_2 \}$ be the basis of $\LL^\vee$ coming from the isomorphism $\LL \simeq \ZZ^2$ chosen above. The nef cone of $Y$ is $\Nef(Y) = \mathrm{cone} \langle p_1 + p_2, p_1 + 3 p_2 \rangle$. We see that $Y$ is a Fano 3-fold.
If we use $x_0, x_1, y, z, t$ as coordinates on $\AA^5$, the irrelevant ideal is $\mathrm{Irr}_\Sigma = (x_0, x_1, y) \cdot (z,t)$. Considering the open set 
$U_{\Sigma}=\AA^5_{\CC} \smallsetminus {\rm V}({\rm Irr}_{\Sigma})$,
the toric variety $Y$ is given by the quotient $U_{\Sigma}/ \GG_\rmm^2$, under the action of $\GG_\rmm^2$ on $\AA^5$ induced by the matrix $D$, and the toric orbifold $\cY$ is the stack-theoretic quotient $[U_{\Sigma}/ \GG_\rmm^2]$.

The singular locus of $Y$ has two components: a rational curve $C$, corresponding to the cone $\sigma_{35}$ and  made up of the points $[x_0 : x_1 : 0 : 1 :0]$, and the point $P = [0 : 0 : 1 : 1 : 0]$, corresponding to the cone $\sigma_{125}$. A neighbourhood of every point of $C$ in $Y$ is isomorphic to $\frac{1}{3}(1,1) \times \AA^1$.  One can see that a neighbourhood of $P$ in $Y$ is isomorphic to $\frac{1}{2}(1,1,1)$. The connected components of the inertia stack $\rI \cY$ are indexed by $\BBox(\Sigma) = \{ (0,0,0), (1,1,1), (2,2,2), (2,2,1) \}$, with ages $0, \frac{2}{3}, \frac{4}{3}, \frac{3}{2}$ respectively.

Let $\cX \into \cY$ be the hypersurface defined by a generic section of the line bundle $\cE$ on $\cY$ with $c_1(\cE) = 3 p_1 + 3 p_2$. Such a generic section is of the form
\begin{equation*}
f(x_0, x_1, y, z , t) = f_3(x_0, x_1) t^3 + f_2(x_0, x_1) (ayt^2 + bz) + f_1(x_0, x_1) y^2 t + cy^3,
\end{equation*}
where $a,b,c \in \CC$ and $f_i(x_0, x_1)$ denotes a homogeneous polynomial of degree $i$ in the variables $x_0, x_1$. Since $f(0,0,1,1,0) = c$, we see that a generic choice for $f$ implies $P \notin X$. Moreover, since $f(x_0, x_1, 0, 1, 0) = f_2(x_0, x_1) b$, we see that the surface $X$ intersects the curve $C$ in two points. For each of these two points there is a neighbourhood in $X$ that is analytically isomorphic to $\frac{1}{3}(1,1)$.

By adjunction $-K_X = (-K_Y - E) \vert_X = (4p_1 + 5p_2 - 3p_1 - 3p_2) \vert_X = (p_1 + 2p_2) \vert_X$, which is ample. Therefore $X$ is a del Pezzo surface with two singular points of type $\frac{1}{3}(1,1)$. Using the relations $(p_1 + 3p_2)p_2 =0$, $p_1^2(p_1+p_2) =0$ and $p_1^2 p_2 = \frac{1}{2}$ that hold in $\rH^\bullet(Y, \QQ)$, one can show that the degree of $X$ is $K_X^2 = (p_1 + 2p_2)^2(3 p_1 + 3 p_2) = \frac{8}{3}$. Hence $X = X_{2, 8/3}$.

Now we `extend' with the vector $(1,1,1)$. In other words, we consider the map $S = \{ 1 \} \to N$ with $s_1 = (1,1,1) \in N$ and the corresponding stacky fan, which is the one with extended ray map \eqref{eq:extended_fan_map_general}
\begin{equation*}
\rho^S = \begin{bmatrix}
               1 & 0 & 0 & -1 & 3 & 1 \\
               0 & 1 & 0 & -1 & 3 & 1 \\
               0 & 0 & 1 & -1 & 2 & 1 \\
              \end{bmatrix}.
\end{equation*} 
A basis of $\LL^S = \ker (\rho^S \colon \ZZ^6 \to N)$ is given by the rows of the matrix
\begin{equation*}
D^S = \begin{bmatrix}
             1 & 1 & 1 & 1 & 0 & 0 \\
             0 & 0 & 1 & 3 & 1 & 0 \\
             0 & 0 & 0 & 1 & 0 & 1 \\
            \end{bmatrix}.
\end{equation*}
We use this basis to identify $\LL^S$ with $\ZZ^3$. We call $l_1, l_2, k$ the coordinates
with respect to this basis; thus, the inclusion $\LL^S \otimes_\ZZ \RR \hookrightarrow \RR^6$ is given by
\begin{eqnarray*}
 (l_1,l_2,k) & \mapsto & \begin{pmatrix}
                                  l_1 \\ l_1 \\ l_1+l_2 \\l_1+3l_2+k \\ l_2 \\ k \\
                                 \end{pmatrix}.
\end{eqnarray*}
The extended fan sequence \eqref{eq:extended_fan_sequence_general} is 
\begin{equation*}
0 \longrightarrow \LL^S \simeq \ZZ^3 \overset{^t \! D^S}\longrightarrow \ZZ^6 \overset{\rho^S} \longrightarrow N = \ZZ^3 \longrightarrow 0
\end{equation*}
and the extended divisor sequence \eqref{eq:extended_divisor_sequence_general} is
\begin{equation*}
0 \longrightarrow M = \ZZ^3 \overset{^t \! \rho^S} \longrightarrow \ZZ^6 \overset{D^S}\longrightarrow \LL^{S \vee} \longrightarrow 0.
\end{equation*}
The extended nef cone is 
\begin{equation*}
\Nef^S(\cY) = \mathrm{cone}\left\langle\begin{bmatrix}
                           1 \\ 3 \\ 1 \\
                          \end{bmatrix},
                          \begin{bmatrix}
                           3 \\ 3 \\ 1 \\
                          \end{bmatrix},
                          \begin{bmatrix}
                           0 \\ 0 \\ 1 \\
                          \end{bmatrix}
 \right\rangle.
\end{equation*}
One can check that
\begin{equation*}
\Lambda\rE^S=\left\{\begin{pmatrix}
                              l_1 \\ l_1 \\ l_1+l_2 \\l_1+3l_2+k \\ l_2 \\ k \\
                             \end{pmatrix} \in \QQ^6
 \right|\left. \begin{matrix} l_1 + 3 l_2 + k \geq 0, 3 l_1 + 3 l_2 + k \geq 0, \\
  k \in \NN,  \\ 
  (l_1 \in \ZZ, 3l_2\in\ZZ) ~{\rm or}~ (l_1+l_2 \in \ZZ, 2l_2 \in \ZZ) \\ \\ \end{matrix}\right\}.
\end{equation*}
The extended reduction function $v^S \colon \Lambda\rE^S \rightarrow \BBox(\Sigma)$ is given by 
\begin{align*}
 v^S(l_1,l_2,k) &= \lceil l_1\rceil \begin{pmatrix}
                                           1 \\ 0 \\ 0 \\
                                          \end{pmatrix}+
                        \lceil l_1\rceil \begin{pmatrix}
                                          0 \\ 1 \\ 0 \\
                                         \end{pmatrix}+
                        \lceil l_1+l_2 \rceil \begin{pmatrix}
                                               0 \\ 0 \\ 1 \\
                                              \end{pmatrix}+ \\
                      & \qquad \qquad + \lceil l_1+3l_2+k\rceil \begin{pmatrix}
                                                 -1 \\ -1 \\ -1 \\
                                                \end{pmatrix}+
                        \lceil l_2 \rceil \begin{pmatrix}
                                            3 \\ 3 \\ 2 \\
                                          \end{pmatrix}+
                        \lceil k\rceil \begin{pmatrix}
                                        1 \\ 1 \\ 1 \\
                                       \end{pmatrix} \\
                     &= \begin{pmatrix}
                         \lceil l_1\rceil -\lceil l_1+3l_2\rceil +3\lceil l_2\rceil \\
                         \lceil l_1\rceil -\lceil l_1+3l_2\rceil +3\lceil l_2\rceil \\
                         \lceil l_1+l_2\rceil -\lceil l_1+3l_2\rceil +2\lceil l_2\rceil \\
                        \end{pmatrix}.
\end{align*}
We get:
\begin{itemize}
 \item $v^S(l_1, l_2, k) = (0,0,0)$ if $l_1, l_2 \in \ZZ$,
 \item $v^S(l_1, l_2, k) = (2,2,2)$ if $l_1 \in \ZZ$, $l_2 \in \frac{1}{3} + \ZZ$,
 \item $v^S(l_1, l_2, k) = (1,1,1)$ if $l_1 \in \ZZ$, $l_2 \in \frac{2}{3} + \ZZ$
 \item $v^S(l_1, l_2, k) = (2,2,1)$ if $l_1, l_2 \in \frac{1}{2} + \ZZ$.
\end{itemize}
Since $s_1 = \frac{1}{3} \rho_3 + \frac{1}{3} \rho_5$, the image of $(l_1, l_2, k) \in \LL^S \otimes_\ZZ \QQ$ in $\LL \otimes_\ZZ \QQ$ via the splitting \eqref{eq:isomorphism_splitting} is
\begin{equation}
d = \begin{bmatrix}
l_1 \\
l_1 \\
l_1 + l_2 + \frac{k}{3}\\
l_1 + 3 l_2 + k \\
l_2 + \frac{k}{3}
\end{bmatrix}
=
l_1 \begin{bmatrix}
1 \\ 1 \\ 1 \\ 1 \\ 0
\end{bmatrix}
+ \left( l_2 + \frac{k}{3} \right) \begin{bmatrix}
0 \\ 0 \\1 \\ 3 \\ 1
\end{bmatrix}.
\end{equation}
Therefore for $\lambda = (l_1, l_2, k) \in \Lambda \rE^S$ we have $\tilde{Q}^\lambda = Q^{(l_1, l_2 + \frac{k}{3})} \xi^k$.

We take $\epsi = (3,3,1)$ as a lifting of the line bundle $E = 3p_1 + 3p_2 \in \LL^\vee$ to $\LL^{S \vee}$. 
If we denote by $\kappa$ the equivariant parameter, the $S$-extended $\cE$-twisted I-function \eqref{eq:twisted_I_function} is
\begin{align*}
I_\cE^S (\tau_1, \tau_2, \xi; z) &= z \rme^{(\tau_1 p_1 + \tau_2 p_2)/z} \sum_{(l_1, l_2, k) \in \Lambda \rE^S} 
\tilde{Q}^{(l_1, l_2,k)} \rme^{\tau_1 l_1 + \tau_2 \left( l_2 + \frac{k}{3} \right)} ~~ \times \\
&\times~~ \left(\ambientfactor{l_1}{p_1}\right)^2 
\ambientfactor{l_1+l_2}{p_1+p_2} ~~\times \\
&\times~~ \ambientfactor{l_1+3l_2+k}{p_1+3p_2} 
\ambientfactor{l_2}{p_2} \frac{1}{k! z^k} ~~\times \\
& \times~~ \prod_{\substack{0\leq a\leq 3l_1+3l_2+k \\ \langle a\rangle=\langle 3l_1+3l_2+k\rangle}}(3p_1+3p_2+ \kappa +az) \mathbf{1}_{v^S(l_1,l_2,k)}.
\end{align*}
The degree of the summand corresponding to $\lambda \in \Lambda \rE^S$ with respect to $z$ is not smaller than $-1$ if and only if $\lambda \in \left\{ (0,0,0), (1,0,0), (0,0,1), (0, - \frac{1}{3}, 1), (1, - \frac{1}{3}, 0) \right\}$. Therefore\begin{align*}
I_\cE^S(\tau_1, \tau_2, \xi; z)= & z \bfone_0+ \tau_1 p_1 + \tau_2 p_2 + 
\left( 6 Q^{(1,0)} \rme^{\tau_1} + Q^{(0,\frac{1}{3})} \xi \rme^{\frac{\tau_2}{3}} \right) \bfone_0+ \\
&+ \left( \xi +3 Q^{(1,- \frac{1}{3})} \rme^{\tau_1 - \frac{\tau_2}{3}} \right) \bfone_{(1,1,1)}+O(z \inv).
\end{align*}
By the mirror theorem, the $\cE$-twisted J-function of $\cY$ is such that 
\begin{equation*}
J^\mathrm{tw}_\cY \left( \left( \xi +3 Q^{(1,- \frac{1}{3})} \right) \bfone_{(1,1,1)}, z \right) = \exp\left( - z \inv \left( 6 Q^{(1,0)} + Q^{(0,\frac{1}{3})} \xi \right) \right) \cdot I^S_\cE(0,0, \xi ;z).
\end{equation*}
By Conjecture \ref{conj:tom_quantum_lefschetz},
\begin{equation*}
I_{\cX,\cY} \left( \! \left( \xi +3 Q^{(1,- \frac{1}{3})} \right) \iota^\star \bfone_{(1,1,1)}, z \right) := \lim_{\kappa \to 0} \iota^\star J^\mathrm{tw}_\cY \left( \left( \xi +3 Q^{(1,- \frac{1}{3})} \right) \bfone_{(1,1,1)}, z \right)
\end{equation*}
lies on the Givental cone of $\cX$. Therefore
\begin{align*}
I_{\cX,\cY} \left( \eta \iota^\star \bfone_{(1,1,1)}, z \right) &= \exp\left( - z \inv \left( 3 Q^{(1,0)} + Q^{(0,\frac{1}{3})} \eta \right) \! \right) \times \\
& \times \lim_{\kappa \to 0} I^S_\cE \left( 0,0, \eta - 3 Q^{(1, - \frac{1}{3})} ;z \right)
\end{align*}

Let $\bfone_1$, $\bfone_2$ denote the two identity classes of the components of $\rI \cX$ with age equal to $2/3$. Now we compute a specialisation of the quantum period $G_X(x_1, x_2; t) \in \QQ [ x_1, x_2 ] [ \! [ t ] \! ]$ of $X$.
Since $\iota^\star \bfone_{(1,1,1)} = \bfone_1 + \bfone_2$, setting $z = 1$ and $\eta = x t^{\frac{1}{3}}$, replacing $Q^{(\al_1 , \al_2)} \mapsto t^{\al_1 + 2 \al_2}$ (and consequently $\tilde{Q}^{(l_1, l_2, k)} \mapsto t^{l_1 + 2 l_2 + k} (x-3)^k$), and considering the component along $\bfone_0$ only, we get
\begin{equation*}
G_X (x, x ; t) = \rme^{-xt - 3t} \! \sum_{\substack{l_1,l_2,k\in\NN}}
\frac{(3l_1+3l_2+k)!}{(l_1!)^2l_2!(l_1+l_2)!(l_1+3l_2+k)!k!} (x-3)^k t^{l_1+2l_2+k}.
\end{equation*}
Since the two singular points of $X$ lie in the same component of the singular locus of $Y$, we cannot distinguish the two points, but we are able to compute $G_X(x_1, x_2; t)$ only for $x_1 = x_2 $. It is possible that if we had used another model of $X$ as a complete intersection in a toric orbifold, we could have been able to compute the whole quantum period of $X$.

The regularised quantum period is such that
\begin{align*}
\widehat{G}_X(x,x;t) &= 1 + (12 x + 20)t^2 + (6x^2 + 108x + 168) t^3 \\
&+ (396 x^2 + 1800 x + 2220) t^4 \\
&+ (360 x^3 + 7980 x^2 + 26640 x + 27600) t^5
+ \cdots
\end{align*}
and matches with the classical period of a $1$-dimensional subspace of the $2$-parameter family of maximally mutable Laurent 
polynomials associated to the polygon n.13.

%%%%%%%%%%%%%

\subsection{Another example of a toric complete intersection: $B_{1,16/3}$.} \label{subsec:example_3}

Let $\cX$ be a general quartic in $\cY = \PP(1,1,1,3)$.
In this example we apply the Quantum Lefschetz technique, as in the \S\ref{subsec:example_2}, to compute the quantum period of $\cX$. Nevertheless, here it is crucial to use Conjecture \ref{conj:tom_quantum_lefschetz} by applying $\iota^\star$ firstly and then considering the limit for $\kappa \to 0$.

The reason is that, since the toric ambient $\cY$ is `extended weak Fano', it is impossible to choose a lifting of $\cE = \cO_\cY(4)$ to the extended Picard group in such a way that the extended twisted I-function $I^S_\cE$ has both a good asymptotic behaviour with respect to $z$ and a well-defined non-equivariant limit for $\kappa \to 0$.
So we will choose a lifting of $\cE$ such that $I^S_\cE$ has a good asymptotic behaviour, but $\lim_{\kappa \to 0} I^S_\cE$ does not exist. Fortunately, even though $I^S_\cE$ does not have a well-defined limit as $\kappa \to 0$,   $\iota^\star I^S_\cE$ does:  $\iota^\star I^S_\cE \to I_{\cX,\cY}$ as $\kappa \to 0$. Having a good asymptotic behaviour, $I_{\cX,\cY}$ gives information about $J_\cX$. We will be more precise below.

It is easy to see that $[0:0:0:1]$ is the unique singular point of $Y$ and is of type $\frac{1}{3}(1,1,1)$. The inertia stack $\rI \cY$ has three connected components: one isomorphic
to $\cY$ and two non-trivial components which are both isomorphic to $\rB \bmu_3$.  Since $-K_X = \cO_X(2)$, $X$ is a del Pezzo surface with Fano index $2$ and degree $K_X^2 = 2 \cdot 2 \cdot 4 \cdot \frac{1}{3} = \frac{16}{3}$. Moreover $[0:0:0:1]$ is the unique singular point of $X$ and is of type $\frac{1}{3}(1,1)$. Therefore $X$ has been called $B_{1, 16/3}$ in \cite{alessio_liana}.

The fan sequence \eqref{eq:fan_sequence_general} of $\cY$ is
$$
0 \longrightarrow \LL \simeq \ZZ \longrightarrow \ZZ^4 \overset{\rho}\longrightarrow \ZZ^3 = N \longrightarrow 0,
$$
where
$$\rho=\begin{bmatrix}
              -1 & 1 & 0 & 0 \\
              -1 & 0 & 1 & 0 \\
              -3 & 0 & 0 & 1 \\
             \end{bmatrix}.
$$
The divisor matrix is simply 
$D=\begin{bmatrix} 1 & 1 & 1 & 3\end{bmatrix}$. We use the transpose of $D$ as a basis of $\LL$. One can check that $\BBox(\cY) = \left\{ (0,0,0), (0,0,-1), (0,0,-2) \right\}$, with ages $0, 1, 2$ respectively.

Now we extend with $(0,0,-1)$. The extended fan sequence \eqref{eq:extended_fan_sequence_general} is 
\begin{equation*}
0 \longrightarrow \LL^S \simeq \ZZ^2 \overset{^t \! D^S}\longrightarrow \ZZ^5 \overset{\rho^S}\longrightarrow \ZZ^3 = N \longrightarrow 0,
\end{equation*}
where
\begin{equation*}
\rho^S = \begin{bmatrix}
                 -1 & 1 & 0 & 0 & 0 \\
                 -1 & 0 & 1 & 0 & 0 \\
                 -3 & 0 & 0 & 1 & -1
                \end{bmatrix}
\end{equation*}
and
\begin{equation*}
D^S=\begin{bmatrix}
                 1 & 1 & 1 & 3 & 0 \\
                 0 & 0 & 0 & 1 & 1 \\
                \end{bmatrix}.
\end{equation*}
The extended nef cone is 
$$
\Nef^S(\cY)= \mathrm{cone} \left\langle \begin{pmatrix}
                                       3 \\ 1 \\
                                      \end{pmatrix},\begin{pmatrix}
                                      0 \\ 1 \\
                                      \end{pmatrix} \right\rangle.
$$
Let us use coordinates $(l,k)$ on $\LL^S$ given by the basis made up of the rows of $D^S$. One can check that
$$
\Lambda\rE^S(\cY) = \left\{ \left. \begin{pmatrix} l \\ l \\ l \\ 3l+k \\ k\end{pmatrix} \in \QQ^5 \right|
\begin{matrix} l \in \frac{1}{3}\ZZ, ~ k \in \ZZ ~~\text{ s.t. } \\ 3l +k \geq 0,~k\geq 0 
 \end{matrix} \right\}.
$$
The reduction function $v^S \colon \Lambda \rE^S \to \BBox(\cY)$ is given by $v^S(l,k) = 3 \langle - l \rangle (0,0,-1)$. Since $s_1 = \frac{1}{3} (\rho_1 + \rho_2 + \rho_3)$, the projection in $\LL \otimes_\ZZ \QQ$ of $(l,k) \in \LL^S \otimes_\ZZ \QQ$ via the splitting \eqref{eq:isomorphism_splitting} is $d = l + \frac{k}{3}$.

Let $p$ be the first Chern class of $\cO_\cY(1)$.
In order to write down a twisted $S$-extended I-function, we have to choose a lifting $(4,\al)$ of $4p \in \LL^\vee$ to $\LL^{S \vee}$. One can check that $I^S_\cE$ has a good asymptotic behaviour if and only if $(2, 2-\al) \in \Nef^S(\cY)$, i.e. $\al \leq 1$. On the other hand, $\lim_{\kappa \to 0} I^S_\cE$ exists if and only if $(4, \al) \in \Nef^S(\cY)$, i.e. $\al \geq 2$. Therefore, it is impossible to find an $\al \in \ZZ$ such that $I^S_\cE$ has a good asymptotic behaviour and that the non-equivariant limit of $I^S_\cE$ exists. This is related to the fact that the extended anticanonical class $(6,2)$ is not in  the interior of $\Nef^S(\cY)$, i.e. $\cY$ is not `extended Fano', but only `extended weak Fano'.

Now we fix $\al = 1$. Consider the summand 
\begin{equation*}
\square_{l,k} =\left(\ambientfactor{l}{p}\right)^{\! \! 3} \ambientfactorwithoutnumerator{3l+k}{3p}
\frac{1}{k! z^k} \bundlefactorwithdenominator{4l+k}{4p + \kappa}
\end{equation*}
of $I^S_\cE$ corresponding to $(l,k) \in \Lambda \rE^S$.
We see that the degree of $\square_{l,k}$ with respect to $z$ is the following:
\begin{align*}
\deg \square_{l,k} &= -3 \left(
\begin{cases}
 \lceil l\rceil+1 & \text{ if }l\in\ZZ,~l<0 \\
 \lceil l\rceil & \text{ if }l\notin\ZZ \text{ or }l\geq 0 \\
\end{cases} \right) -\lceil 3l+k\rceil-k+\\
&+ \left( \begin{cases}
  \lceil 4l+ k \rceil +1 & \text{ if } 4l+ k\in\ZZ, 4l+ k< 0 \\
  \lceil 4l+ k\rceil & \text{ if }4l+ k\notin\ZZ \text{ or } 4l+ k\geq 0  \\
 \end{cases} \right) \\
 &= \begin{cases}
  -2\lceil l\rceil -k & \text{ if } l \notin \ZZ \text{ or } l \geq 0; \\
  -2 l  -k-3 & \text{ if }l \in \ZZ, - \frac{k}{4} \leq l < 0; \\
  -2l -k-2 & \text{ if }l \in \ZZ, l < - \frac{k}{4}. \\
 \end{cases}
\end{align*}
It is easy to show that $(-\frac{1}{3}, 1)$ is the only $(l,k) \in \Lambda \rE^S$ such that $\deg \square_{l,k} \geq -1$. \begin{comment}

We need to understand for which $(l,k) \in \Lambda \rE^S$ the maximal degree in $\triangle_{l,k}$ is less or equal than $-2$. 
Thus for $l,~k\geq 0$, we need $l=0$ and $k=0,1$. Assume instead $l<0$; we need to consider case by case:

\begin{itemize}
 \item if $4l+k\geq 0$, $l\in\ZZ$:
 $$-2l-k-3\geq -1 \Longrightarrow -2l\geq k+2, \text{ which contradicts }-4l\leq k;$$
 
 \item if $4l+k<0$ , $l\in\ZZ$:
 $$-2l-k-2\geq -1 \Longrightarrow -2l\geq k+1, \text{ which contradicts }-3l\leq k;$$
 
 \item if $l=-m-\frac{1}{3},~m\in\ZZ_{\geq 0}$:
 $$\begin{cases}
    3l+k \geq 0 \\ -2\lceil l\rceil -k\geq -1 
   \end{cases}
   \Rightarrow \begin{cases}
                -3m-1+k \geq 0 \\ 2m-l\geq -1 
               \end{cases}
               \Rightarrow \begin{cases}
                            3m \leq k-1 \\ 2m \geq k-1 
                           \end{cases}
                           \Rightarrow \begin{cases}
                                        m=0 \\ k=1 
                                       \end{cases}
$$

 \item if $l=-m-\frac{2}{3},~m\in\ZZ_{\geq 0}$:
 $$\begin{cases}
    3l+k \geq 0 \\ -2\lceil l\rceil -k\geq -1 
   \end{cases}
   \Rightarrow \begin{cases}
                -3m-2+k \geq 0 \\ 2m-l\geq -1 
               \end{cases}
               \Rightarrow \begin{cases}
                            3m \leq k-2 \\ 2m \geq k-1 
                           \end{cases}\text{, contradiction.}
$$
\end{itemize}
\end{comment}
So the twisted I-function of $\cY$ has the following asymptotic behaviour:
\begin{align*}
 I^S_\cE(\tau,\xi, z)=z{\bf 1}_0+ \tau p+ \tilde{Q}^{(0,1)}{\bf 1}_0+ \xi {\bf 1}_{(0,0,-1)}
 +O(z \inv).
\end{align*}
Since the lifting we have chosen is not in $\Nef^S$, it follows that the non-equivariant limit of $I^S_\cE$
does not exist. 

However, we can study the pull-back $\iota^\star(I^S_\cE)$ more carefully. 
The terms $\square_{l,k}$ that are divisible by $\kappa \inv$, namely the ones that make the limit do not exist,
correspond to $(l,k)$ such that $4l+k\in\ZZ_{<0}$;
in these cases we have that $\square_{l,k}$ is divisible by $p^3$ and then, since $\cX$ is a surface, $\iota^\star(\square_{l,k})=0$. Therefore
the limit
$$
I_{\cX,\cY} := \lim_{\kappa \to 0} \iota^\star I^S_\cE
$$
exists and, according to Conjecture \ref{conj:tom_quantum_lefschetz}, lies in $\cL_\cX$.

Thus, the J-function of $\cX$ is such that
$$
J_{\cX}(\tau p + \xi {\bf 1}_{1/3};z)=\exp\left( -\frac{ \tilde{Q}^{(0,1)}{\bf 1}_0}{z}\right)
I_{\cX,\cY}.
$$

After applying the change of variables $\tilde{Q}^{(l,k)} \mapsto x^k t^{2l+k}$, we get
$$
 G_X(x;t)=\exp(-xt)\sum_{l,k\in\NN}\frac{(4l+k)!}{l!^3 (3l+k)! k!}x^kt^{2l+k}.
$$
The regularised quantum period is
$$
 \widehat{G}_X(x;t)=1+8t^2+6xt^3+168t^4+240xt^5+(4440+90x)t^6+9240xt^7+\ldots.
$$
and matches with the classical period of a $1$-dimensional subspace of the $2$-parameter family of maximally mutable Laurent
polynomials associated to the polygon n.12.

\section{The Abelian/non-Abelian Correspondence}
\label{sec:abelian_nonabelian}

\subsection{Theoretical background}
\label{sec:setup_abelian}

Let $G$ be a reductive group over $\CC$ acting on a smooth affine variety $A$. Let $T$ be a maximal torus in $G$. We consider the stack-theoretic GIT quotients $[A \git G]$ and $[A \git T]$. Let $E$ be a representation of $G$ and let $\cE_G$ and $\cE_T$ be the induced vector bundles on $[A \git G]$ and $[A \git T]$, respectively. We assume that $[ A \git G]$ and $[A \git T]$ are proper Deligne--Mumford stacks with projective coarse moduli spaces. Moreover, we assume that there are no strictly semi-stable points and the unstable locus has codimension at least $2$, for both the actions of $G$ and $T$.

The Abelian/non-Abelian Correspondence of Bertram, Ciocan-Fontanine, Kim and Sabbah \cite{betram_cf_kim_abelian,abelian_nonabelian} relates\footnote{The results of \cite{abelian_nonabelian} have a projective hypothesis on $A$, but their arguments apply verbatim to the case where $A$ is affine, as here.} genus-zero Gromov--Witten invariants of $[A \git G]$, twisted by $\cE_G$, to the Gromov--Witten invariants of $[A \git T]$, twisted by $\cE_T$. We will be more precise below.

Let $W = \rN(T) / T$ be the Weyl group and $\Phi = \Phi_+ \cup \Phi_-$ be the root system with decomposition into positive and negative roots. The adjoint $T$-representation $\frakg$ splits as $\frakg = \frakt \oplus \bigoplus_{\al \in \Phi} \frakg_\al$. For every $\al \in \Phi$, the one-dimensional $T$-representation $\frakg_\al$ induces a line bundle $L_\al$ on $[A \git T]$. Let $p_\al = c_1(L_\al)$. 
Consider the cohomology class
\begin{equation*}
\omega := \prod_{\al \in \Phi_+} p_\al.
\end{equation*}
It is the fundamental $W$-anti-invariant class in the cohomology of $[A \git T]$. We recall that the $W$-invariant part of the cohomology of $[A \git T]$ may be identified with the cohomology of $[A \git G]$.

It is easy to see that there are homomorphisms $\Pic([A \git G]) \into \Pic([A \git T])$ and $\rho \colon \mathrm{Eff}([A \git T]) \onto \mathrm{Eff}([A \git G])$. The homomorphism $\epsi \colon \mathrm{Eff}([A \git G]) \to \QQ$ sends a curve class $\beta$ into $\sum_{\al \in \Phi_+} L_\al \cdot \tilde{\beta}$, where $\tilde{\beta} \in \mathrm{Eff}([A \git T])$ is a preimage of $\beta$. We consider the homomorphism on the Novikov rings $p \colon \boldsymbol{\Lambda}_{[A \git T]} \to \boldsymbol{\Lambda}_{[A \git G]}$ defined by $p(Q^d) = (-1)^{\epsi(\rho(d))} Q^{\rho(d)}$, for every $d \in \mathrm{Eff}([A \git T])$.\footnote{This actually depends on the choice of an $m$th root of $-1$, where $m$ is the least common multiple of the exponents of the automorphism group of geometric points of $[A \git G]$.}

\begin{conjecture}[Abelian/non-Abelian Correspondence] \label{conj:abelian}
Let $J^{\cE_G}$ and $J^{\cE_T}$ be the J-functions for the corresponding twisted Gromov--Witten theories of $[A \git G]$ and $[A \git T]$ (as in \S\ref{sec:quantum_Lef_principle}), respectively. Consider the differential operator $\cD = z \partial_\omega$. Let $\tilde{J}$ be the $W$-invariant function such that $\cD J^{\cE_T} = \omega \cup \tilde{J}$.

Then $\tilde{J}$ coincides with $J^{\cE_G}$, after:
\begin{itemize}
\item identifying the $W$-invariant part of the cohomology of $[A \git T]$ with the cohomology of $[A \git G]$;
\item applying the homomorphism $p$ on the Novikov ring of $[A \git T]$;
\item applying a suitable mirror map $\phi$ on the parameters:
\begin{equation*}
\left. \cD J^{\cE_T} (\gamma; z) \right\vert_{Q^d \mapsto p(Q^d)} = \omega \cup \tilde{J}(\phi(\gamma); z).
\end{equation*}
\end{itemize}
\end{conjecture}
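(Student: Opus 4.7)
The plan is to adapt the scheme-theoretic proof of Ciocan-Fontanine--Kim--Sabbah to the Deligne--Mumford setting using orbifold quasimap theory in the spirit of Cheong--Ciocan-Fontanine--Kim. The argument factors naturally into three stages: establish the correspondence at the level of explicit hypergeometric I-functions, transport the identity from I-functions to J-functions via quasimap wall-crossing, and control the $W$-action on twisted sectors so that both sides can be compared orbifold-theoretically.

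First I would construct twisted orbifold quasimap I-functions $I^{\cE_T}$ and $I^{\cE_G}$ for $[A \git T]$ and $[A \git G]$, by Atiyah--Bott localisation on the quasimap graph space. In the resulting hypergeometric formulas, the operator $\cD = z \partial_\omega$ introduces exactly the factors $\prod_{\al \in \Phi_+} (p_\al + a z)$ that distinguish the $G$-side from the $T$-side, and the sign $(-1)^{\epsilon(\rho(d))}$ in the Novikov substitution $p$ emerges from virtual localisation on the $W$-cover of the quasimap moduli. The target identity at the I-function level should take the form
\[
\left. \cD\, I^{\cE_T}(\gamma; z) \right|_{Q^d \mapsto p(Q^d)} \;=\; \omega \cup I^{\cE_G}(\gamma; z),
\]
which then upgrades to the conjectural relation between the twisted J-functions by the orbifold quasimap wall-crossing (Brown--Iritani-type results for Givental's Lagrangian cones): both sides of the identity lie on the respective twisted Givental cones, and the unique string-axis projections recover the J-functions up to a mirror map $\phi$ determined by matching $z^{-1}$ expansions. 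The $W$-invariance of $\phi$ on the $T$-side is automatic, since it is constructed from $\cD I^{\cE_T}$ divided by the anti-invariant class $\omega$.

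The main obstacle is the orbifold upgrade of the Chen--Ruan cohomology comparison. In contrast to the scheme case, the inertia stacks of $[A \git T]$ and $[A \git G]$ differ in a subtle way: the $W$-action permutes twisted sectors of $\rI [A \git T]$, and one must canonically identify the $W$-invariant part of $\rH^\bullet_{\mathrm{CR}}([A \git T])$ with $\rH^\bullet_{\mathrm{CR}}([A \git G])$ in a manner that intertwines cup product by $\omega$ with the twisted-sector Euler classes of the positive-root bundles $L_\al$ and tracks the age shifts correctly on each sector. Proving this compatibility --- together with establishing the necessary virtual localisation formulas on orbifold quasimap graph spaces, and verifying that no spurious anomaly arises from isotropy at nodes and marked points --- is the technical heart of the problem, and is precisely where the conjectural status of the statement reflects genuine difficulties not present in the scheme case.
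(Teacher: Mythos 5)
The statement you are trying to prove is left as a conjecture in the paper: the authors do not prove it, they only record that Ciocan-Fontanine, Kim and Sabbah established it when $[A \git G]$ and $[A \git T]$ are smooth varieties (as a consequence of a conjecture on Frobenius structures) and unconditionally when $[A \git G]$ is a flag manifold. So there is no paper proof to compare against, and your text should be judged as a research programme rather than a proof. As such it has a genuine gap --- in fact the central step is circular. Your pivotal identity
\[
\left. \cD\, I^{\cE_T}(\gamma; z) \right|_{Q^d \mapsto p(Q^d)} \;=\; \omega \cup I^{\cE_G}(\gamma; z)
\]
presupposes that a closed hypergeometric $I$-function $I^{\cE_G}$ for the non-Abelian orbifold quotient exists and is known to lie on the twisted Givental cone $\cL^{\mathrm{tw}}_{[A \git G]}$. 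But producing such a point of the $G$-side cone is precisely what the Abelian/non-Abelian Correspondence is \emph{for}; there is no independent mirror theorem for general $[A \git G]$ from which to extract $I^{\cE_G}$ and then ``verify'' the identity. Asserting the $I$-function identity and then ``upgrading by wall-crossing'' therefore restates the conjecture rather than proving it.

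The two technical ingredients you defer are also exactly where the known arguments break down in the orbifold setting, so flagging them does not discharge them. First, the comparison $\rH^\bullet_{\mathrm{CR}}([A \git T])^W \simeq \rH^\bullet_{\mathrm{CR}}([A \git G])$ is not merely ``subtle'': the inertia stack of $[A \git G]$ has twisted sectors indexed by conjugacy classes of stabilizers that need not be conjugate into $T$, so sectors can exist on the $G$-side with no counterpart on the $T$-side (and the age gradings on matching sectors need not agree with the naive identification). Without an actual construction of this isomorphism, compatible with cup product by $\omega$ and with the ages, even the \emph{statement} of your $I$-function identity is not well posed sector by sector. Second, the quasimap wall-crossing results you invoke are established for untwisted or convex-twisted theories; the surfaces in this paper arise from non-convex line bundles (this is why the authors must also conjecture Quantum Lefschetz, Conjecture~\ref{conj:tom_quantum_lefschetz}), so the passage from $I$-functions to $J$-functions cannot be taken off the shelf. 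A correct write-up would either restrict to cases where all three ingredients are theorems, or present them as the conjectural inputs they are --- which is what the paper itself does.
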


In \cite[Theorem 6.1.2]{abelian_nonabelian} Ciocan-Fontanine, Kim and Sabbah state Conjecture \ref{conj:abelian} under the additional assumption that $[A \git T]$ and $[A \git G]$ are smooth varieties and show that it is a consequence of a conjecture about Frobenius structures. Moreover, in \cite[Theorem 4.1.1]{abelian_nonabelian} they show that Conjecture \ref{conj:abelian} holds when $[A \git G]$ is a flag manifold.

In \S\ref{subsec:sample_abelian_nonabelian} we show how to use Conjecture \ref{conj:abelian} to compute the quantum period of a del Pezzo surface.

\subsection{Example of orbifold Abelian/non-Abelian correspondence: $X_{1,7/3}$} \label{subsec:sample_abelian_nonabelian}

Let $A$ be the space of $2 \times 5$ matrices, which are denoted by
\begin{equation*}
\begin{bmatrix}
a_1 & a_2 & a_3 & a_4 & a_5 \\
b_1 & b_2 & b_3 & b_4 & b_5
\end{bmatrix}.
\end{equation*}
The group $\rS \rL_2$ acts on $A$ via left multiplication and the group $\GG_\rmm$ acts on $A$ via
\begin{equation*}
\mu \cdot \begin{bmatrix}
a_1 & a_2 & a_3 & a_4 & a_5 \\
b_1 & b_2 & b_3 & b_4 & b_5
\end{bmatrix} =
\begin{bmatrix}
\mu a_1 & \mu a_2 & \mu a_3 & \mu^3 a_4 & \mu^3 a_5 \\
\mu b_1 & \mu b_2 & \mu b_3 & \mu^3 b_4 & \mu^3 b_5
\end{bmatrix}.
\end{equation*}
We get an action of $\rS \rL_2 \times \GG_\rmm$ on $A$, which induces a faithful action of the affine reductive group
\begin{equation*}
G := \frac{\rS \rL_2 \times \GG_\rmm}{
\left\{ \left. \left( \! \! \begin{pmatrix}
\lambda & 0 \\ 0 & \lambda
\end{pmatrix}, \lambda \right) \right\vert \lambda \in \bmu_2 \right\}
}
\end{equation*}
on $A$. Following \cite[Example 2.6]{weighted_grassmannians}, the stack-theoretic GIT quotient $\cF := [A \git G]$ is the weighted Grassmannian $\mathrm{wGr}(2,5)$ with weights $\frac
1 2, \frac 1 2, \frac 1 2, \frac 3 2, \frac 3 2$. By using
\begin{equation*}
c_{ij} = \det \begin{pmatrix}
a_i & a_j \\ b_i & b_j
\end{pmatrix}, \qquad 1 \leq i < j \leq 5
\end{equation*}
as coordinates, we get a closed embedding of $\cF$ into the weighted projective space $\PP = \PP(1^3, 2^6,3)$. The pulling-back homomorphism $\ZZ \simeq \mathrm{Pic}(\PP) \to \mathrm{Pic}(\cF)$ maps $\cO_\PP(1)$ into the line bundle $\cO_\cF(1)$ on $\cF$ associated to the character of $G$ induced by the composite
\begin{equation*}
\rS \rL_2 \times \GG_\rmm \overset{\mathrm{pr_2}}{\longrightarrow} \GG_\rmm \overset{( \cdot)^2}{\longrightarrow} \GG_\rmm.
\end{equation*}
Let $\cX \into \cF$ be the zero locus of a generic section of $\cE_G = \cO_\cF(2)^{\oplus 4}$. The coarse moduli space $X$ of $\cX$ is a del Pezzo surface with one $\frac{1}{3}(1,1)$ and degree $K^2_X = \frac{7}{3}$ (see \cite{alessio_liana}). To compute the Gromov--Witten invariants of $\cX$ we need to compute the $\cE_G$-twisted Gromov--Witten invariants of $\cF$. This can be done by using Conjecture \ref{conj:abelian}.

The maximal subtorus of $G$ is
\begin{equation*}
T := \frac{
\left\{ \left. \left( \! \! \begin{pmatrix}
\lambda & 0 \\ 0 & \lambda \inv
\end{pmatrix}, \mu \right) \right\vert \lambda, \mu \in \GG_\rmm \right\}
}{
\left\{ \left. \left( \! \! \begin{pmatrix}
\lambda & 0 \\ 0 & \lambda
\end{pmatrix}, \lambda \right) \right\vert \lambda \in \bmu_2 \right\}
}.
\end{equation*}
Indeed, $T$ is isomorphic to $\GG_\rmm^2$ via 
\begin{equation}
\left( \! \! \begin{pmatrix}
\lambda & 0 \\ 0 & \lambda \inv
\end{pmatrix}, \mu \right) \mapsto (\lambda \mu, \lambda \inv \mu).
\end{equation}
Therefore the toric Fano orbifold $\cY := [A \git T]$ is the stack-theoretic GIT quotient of $A \simeq \AA^{10}$ with respect to the action of $\GG_\rmm^2$ given by the following matrix.
\begin{equation*}
\begin{matrix}
a_1 & a_2 & a_3 & a_4 & a_5 & b_1 & b_2 & b_3 & b_4 & b_5 \\
1 & 1 & 1 & 2 & 2 & 0 & 0 & 0 & 1 & 1 \\
0 & 0 & 0 & 1 & 1 & 1 & 1 & 1 & 2 & 2
\end{matrix}
\end{equation*}
Let $Y$ be the coarse moduli space of $\cY$.
We denote by $p_1, p_2 \in \rH^2(Y;\QQ)$ the first Chern classes of the line bundles on $\cY$ induced by the characters of $T$ given by
\begin{align*}
\left( \! \! \begin{pmatrix}
\lambda & 0 \\ 0 & \lambda \inv
\end{pmatrix}, \mu \right) &\mapsto \lambda \mu, \\
\left( \! \! \begin{pmatrix}
\lambda & 0 \\ 0 & \lambda \inv
\end{pmatrix}, \mu \right) &\mapsto \lambda \inv \mu.
\end{align*}
The nef cone of $Y$ is $\Nef(Y) = \mathrm{cone}\left\langle 2p_1 + p_2, p_1 + 2 p_2 \right\rangle$.

Let $\cO_\cY(1)$ be the line bundle on $\cY$ such that its restriction to $[A^\rms(G)/T]$ is the pull-back of $\cO_\cF(1) $ from $\cF = [A^\rms(G)/G]$. It corresponds to the character of $T$ defined by
\begin{equation*}
\left( \! \! \begin{pmatrix}
\lambda & 0 \\ 0 & \lambda \inv
\end{pmatrix}, \mu \right) \mapsto \mu^2.
\end{equation*}
Therefore $c_1(\cO_\cY(1)) = p_1 + p_2$, which is ample. Consider the vector bundle $\cE_T = \cO_\cY(2)^{\oplus 4}$.

Since $\cY$ is a toric orbifold, we may construct its $\cE_T$-twisted I-function:
\begin{equation*}
\begin{aligned}
I^{\cE_T} (\tau_1, \tau_2; z) &= z \rme^{(\tau_1 p_1 + \tau_2 p_2)/z} \sum_{(l_1, l_2) \in \Lambda \rE} 
Q^{(l_1, l_2)} \rme^{\tau_1 l_1 + \tau_2 l_2}  ~~ \times \\
&\times~~ \left(\ambientfactor{l_1}{p_1}\right)^3 
\left( \ambientfactor{2l_1+l_2}{2p_1+p_2} \right)^2~~\times \\
&\times~~ \left( \ambientfactor{l_2}{p_2} \right)^3
\left( \ambientfactor{l_1+2l_2}{p_1+2p_2} \right)^2  ~~\times \\
&\times \left( \bundlefactorwithoutdenominator{2l_1 + 2l_2}{2p_1 + 2p_2} \right)^4 \bfone_{v(l_1,l_2)}.
\end{aligned}
\end{equation*}
More compactly, we may write
\begin{equation*}
I^{\cE_T} = \sum_{(l_1, l_2) \in \Lambda \rE} z Q^{(l_1, l_2)} \mathrm{exp}\left( \! \left(l_1 + \frac{p_1}{z} \right) \tau_1 + \left(l_2 + \frac{p_2}{z} \right) \tau_2 \right) \square_{l_1, l_2} \bfone_{v(l_1, l_2)}.
\end{equation*}
One may prove that
\begin{align*}
I^{\cE_T} &= z \bfone_0 + 4 \left( Q^{(1,0)}\rme^{\tau_1} + Q^{(0,1)}\rme^{\tau_2} \right) \bfone_0 + \tau_1 p_1 + \tau_2 p_2 + \\
& + \frac{2}{3} \left( Q^{(-\frac{1}{3}, \frac{2}{3})}\rme^{\frac{-\tau_1+2\tau_2}{3}} + Q^{(\frac{2}{3}, -\frac{1}{3})} \rme^\frac{2 \tau_1 - \tau_2}{3} \right) \bfone_b + \rO(z \inv),
\end{align*}
for some component $b \in \BBox(\cY)$ with $\mathrm{age}(b) = 2$.
Therefore, the $\cE_T$-twisted J-function of $\cY$ is such that
\begin{equation} \label{eq:J_function_abelian_specialised}
J^{\cE_T} (\tau_1 p_1 + \tau_2 p_2 + \phiv(\tau_1, \tau_2) \bfone_b; z) = \exp \left( -4 \frac{Q^{(1,0)}\rme^{\tau_1} + Q^{(0,1)}\rme^{\tau_2}}{z} \right) \cdot I^{\cE_T},
\end{equation}
where
\begin{equation*}
\phiv(\tau_1, \tau_2) = \frac{2}{3} \left( Q^{(- \frac 1 3 , \frac 2 3)}\rme^{\frac{-\tau_1+2 \tau_2}{3}} + Q^{(\frac 2  3, - \frac 1 3)} \rme^\frac{2 \tau_1 - \tau_2}{3} \right).
\end{equation*}

The Weyl group $W = \rN(T)/T$ of $G$ with respect to $T$ is cyclic of order $2$ and is generated by the class of
\begin{equation*}
\left( \! \! \begin{pmatrix}
0 & 1 \\ -1 & 0
\end{pmatrix}, 1 \right).
\end{equation*}
The positive root $\al$ corresponds to the character of $T$ defined by
\begin{equation*}
\left( \! \! \begin{pmatrix}
\lambda & 0 \\ 0 & \lambda \inv
\end{pmatrix}, \mu \right) \mapsto \lambda^2.
\end{equation*}
This character of $T$ induces the line bundle $L_\al$ on $\cY$. Let $\omega = c_1(L_\al) = p_1 - p_2$.

Consider the differential operator $\cD = z \partial_{p_1 - p_2}$. Since $J^{\cE_T}$ is $W$-invariant, $\cD J^{\cE_T}$ is $W$-anti-invariant. Therefore it must be divisible by $\omega = p_1 - p_2$:
\begin{equation*}
\cD J^{\cE_T} = (p_1 - p_2) \cup \tilde{J}.
\end{equation*}
The Abelian/non-Abelian Correspondence (Conjecture \ref{conj:abelian}) relates $\tilde{J}$ with a lifting of the $\cE_G$-twisted J-function of $\cF$, up to the ring homomorphism on the Novikov rings $Q^{(l_1, l_2)} \mapsto (-q)^{l_1 + l_2}$.

 Unfortunately we do not know $J^{\cE_T}$, but by \eqref{eq:J_function_abelian_specialised} we only know $J^{\cE_T} \circ \vartheta$, where
$
\vartheta \colon (\tau_1, \tau_2) \mapsto \tau_1 p_1 + \tau_2 p_2 + \phiv(\tau_1, \tau_2) \bfone_b
$.
Now consider the differential operator $
\overline{\cD} = z (\partial_{\tau_1} - \partial_{\tau_2}).
$

By the chain rule
\begin{align*}
(\cD J^{\cE_T})(\vartheta(\tau_1, \tau_2)) = \overline{\cD} (J^{\cE_T} \circ \vartheta) - z \partial_{\bfone_b} J^{\cE_T}(\vartheta(\tau_1, \tau_2)) \cdot (\partial_{\tau_1} \varphi - \partial_{\tau_2} \varphi),
\end{align*}
where
\begin{align*}
\partial_{\tau_1} \phiv - \partial_{\tau_2} \phiv = \frac{2}{3} \left( - Q^{(- \frac 1 3 , \frac 2 3)}\rme^{\frac{-\tau_1+2 \tau_2}{3}} + Q^{(\frac 2  3, - \frac 1 3)} \rme^\frac{2 \tau_1 - \tau_2}{3} \right)
\end{align*}
From \eqref{eq:J_function_abelian_specialised} we get 
\begin{align*}
\overline{\cD} (J^{\cE_T} \circ \vartheta) &= \exp \left( -4 \frac{Q^{(1,0)}\rme^{\tau_1} + Q^{(0,1)}\rme^{\tau_2} }{z} \right) \times \\
&\times \left[  -4(Q^{(1,0)} \rme^{\tau_1} - Q^{(0,1)} \rme^{\tau_2} ) I^{\cE_T} + \overline{\cD} I^{\cE_T} \right],
\end{align*}
where
\begin{align*}
\overline{\cD} I^{\cE_T} &= \sum_{(l_1, l_2) \in \Lambda \rE} z Q^{(l_1, l_2)} \mathrm{exp}\left( \! \left(l_1 + \frac{p_1}{z} \right) \tau_1 + \left(l_2 + \frac{p_2}{z} \right) \tau_2 \right) \times \\
&\times \left( z l_1 + p_1 - z l_2 - p_2 \right) \square_{l_1, l_2} \bfone_{v(l_1, l_2)}.
\end{align*}

If we set $Q^{(l_1, l_2)} = (-q)^{l_1 + l_2}$, we get
\begin{align*}
\cD J^{\cE_T} \left( \vartheta(0,0) \right) &= \overline{\cD} (J^{\cE_T} \circ \vartheta) \vert_{\tau_1 =  \tau_2 = 0} \\
&= \rme^{8 q z \inv} \sum_{(l_1, l_2) \in \Lambda \rE} z (-q)^{l_1 + l_2} (z l_1 + p_1 - z l_2 - p_2) \square_{l_1, l_2} \bfone_{v(l_1, l_2)},
\end{align*}
whose asymptotic behaviour is 
\begin{equation*}
 \cD J^{\cE_T}  \left( \vartheta(0,0) \right) = (p_1-p_2) \left(z + 16q \bfone_0 + \frac{25}{9}(-q)^{\frac{1}{3}}\bfone_{b}+ O(z \inv)\right).
\end{equation*}

Hence, Conjecture \ref{conj:abelian} implies that a specialisation of $J^{\cE_G}$ coincides with 
$e^{-16q z \inv} \tilde{J}$, via the string equation. Its component along  the identity class $\bfone_0$ is
\begin{align*}
\rme^{-8q z \inv}  \sum_{l_1, l_2 \in \NN} A_{l_1, l_2}(q,z) \left(1 + \frac{l_1 - l_2}{2} \left( -3 H_{l_1} + 3 H_{l_2} - 2 H_{2 l_1 + l_2} + 2 H_{2 l_2 + l_1} \right) \! \right)
\end{align*}
where $H_l := \sum_{i = 1}^l \frac{1}{i}$ is the $l$th harmonic number ($H_0 := 0$) and 
\begin{equation*}
A_{l_1, l_2}(q,z) := (-q)^{l_1+l_2} \frac{(2l_1 + 2 l_2)!^4}{l_1!^3 l_2!^3 (2l_1 + l_2)!^2 (l_1 + 2 l_2)!^2 z^{l_1 + l_2 -1}}.
\end{equation*}
Therefore a specialisation of the quantum period of $\cX$ is 
\begin{align*}
G(t) &= \exp(-8t) \times \\
&\times \sum_{l_1, l_2 \in \NN} A_{l_1, l_2}(t,1) \left( 1 + \frac{l_1 - l_2}{2} \left( -3 H_{l_1} + 3 H_{l_2} - 2 H_{2 l_1 + l_2} + 2 H_{2 l_2 + l_1} \right) \right),
\end{align*}
whose regularization is
\begin{equation*}
\widehat{G}(t) = 1 + 112t^2 + 1650 t^3 + 48048 t^4 + \cdots
\end{equation*}
and matches with the classical period of a maximally mutable Laurent
polynomial associated to the polygon n.10.

%%%%%%%%%%%%%%%%%%%%%%%%%%%%%%%%%%%%%%%%%%%%%%%%%%%%%%%%%%%%%%%%%%%%%%%%%%%%%%%%%%

\section{Results} \label{sec:results}

In this section we summarise our conclusions. In Table \ref{our_list}, for each family of del~Pezzo surfaces with $\frac{1}{3}(1,1)$ singularities we give:
\begin{itemize}
\item its name according to \cite{alessio_liana}, i.e. $X_{k,d}$ or $B_{k,d}$, where $k$ denotes the number of singular points and $d$ is the degree;
\item the method used to compute the quantum period;
\item the name of the corresponding mirror Fano polygon from the list in \cite{minimal_polygons};
\item the dimension of the subspace of the $k$-dimensional space of maximally mutable polynomials along which we matched the classical and quantum periods, as described in Theorem \ref{thm:main_result}.
\end{itemize}

 Later, we explicitly describe our results. For each family we give:
\begin{itemize}
\item model used for our computations, taken from \cite{alessio_liana}: $p_i$'s denote a basis of the Picard group either of the surface, when is toric, or the toric ambient space, when the surface is a complete intersection;
\item asymptotic behaviour of the (extended) $I$-function (see \S\ref{sec:mirror_theorem} and \S\ref{sec:Lefschetz_toric_orbifolds}). Here $\xi$'s, $\tau$'s and $z$ are formal variables. We use the notation $\tau p$ instead of $\tau_1 p_1 + \ldots + \tau_\ell p_\ell$ and we use ${\bf 1}_\al$ to express an identity class supported on a non-trivial component of the inertia stack;
\item (specialized) quantum period and its regularization, as explained in \S\ref{sec:quantum_period_Fano_orbifold};
\item mirror polygon from the list in \cite{minimal_polygons} and the corresponding Laurent polynomial with the specialisation required, if needed. If the specialisation is over a subspace of positive dimension, we use parameters $a_i$'s: in these cases, the matching betweend quantum period and classic period is attained by identifying the parameters $x_i$'s in the quantum period with the $a_i$'s in the classical period.
\end{itemize}

Our computations rely on the use of the computer. Indeed, we have implemented the machinery described in \S\ref{sec:stacky_fans}, \S\ref{sec:mirror_theorem}, \S\ref{sec:Lefschetz_toric_orbifolds} in the language of computer algebra software MAGMA \cite{magma}.

\begin{longtable}{lp{5.5cm}cc}
\caption{Summary of results.} \label{our_list} \\

\toprule 
\multicolumn{1}{c}{{\sc Name}}&
\multicolumn{1}{c}{{\sc Method}} & 
\multicolumn{1}{c}{{\sc Mirror}} & 
\multicolumn{1}{c}{{\sc Result}}
\\ \midrule 
\endfirsthead

\multicolumn{4}{c}{{\tablename\ \thetable{}: Summary of results -- continued from previous page}} \\ \addlinespace[1.7ex] \midrule
\multicolumn{1}{c}{{\sc Name}}&
\multicolumn{1}{c}{{\sc Method}} & 
\multicolumn{1}{c}{{\sc Mirror}} & 
\multicolumn{1}{c}{{\sc Result}}
\\ \midrule \endhead

\midrule \multicolumn{4}{c}{{Continued on next page}} \endfoot

\bottomrule \endlastfoot

\textbf{1.} \hyperref[subsec:P113]{$\PP(1,1,3)$} & Toric  & n.26 & $1$-dimensional \\ \addlinespace[1.3ex] 

\rowcolor[gray]{0.95} 
\textbf{2.} \hyperref[subsec:B_1,16/3]{$B_{1,16/3}$} & Quantum Lefschetz  & n.21 & $1$-dimensional \\ \addlinespace[1.3ex] 

\textbf{3.} \hyperref[subsec:B_2,8/3]{$B_{2,8/3}$} & Quantum Lefschetz  & n.12 & $1$-dimensional \\ \addlinespace[1.3ex] 

\rowcolor[gray]{0.95} 
\textbf{4.} \hyperref[subsec:X_1,22/3]{$X_{1,22/3}$} & Toric  & n.25 & $1$-dimensional \\ \addlinespace[1.3ex] 

\textbf{5.} \hyperref[subsec:X_1,19/3]{$X_{1,19/3}$} & Toric  & n.24 & $1$-dimensional \\ \addlinespace[1.3ex] 

\rowcolor[gray]{0.95} 
\textbf{6.} \hyperref[subsec:X_1,16/3]{$X_{1,16/3}$} & Quantum Lefschetz  & n.22 & $1$-dimensional \\ \addlinespace[1.3ex] 

\textbf{7.} \hyperref[subsec:X_1,13/3]{$X_{1,13/3}$} & Quantum Lefschetz  & n.18 & $0$-dimensional \\ \addlinespace[1.3ex] 

\rowcolor[gray]{0.95} 
\textbf{8.} \hyperref[subsec:X_1,10/3]{$X_{1,10/3}$} & Quantum Lefschetz & n.15 & $0$-dimensional \\ \addlinespace[1.3ex] 

\textbf{9.} \hyperref[subsec:X_1,7/3]{$X_{1,7/3}$} & Abelian/non-Abelian & n.10 &  $0$-dimensional \\ \addlinespace[1.3ex] 

\rowcolor[gray]{0.95} 
\textbf{10.} \hyperref[subsec:X_1,4/3]{$X_{1,4/3}$} & Quantum Lefschetz & n.4 & $0$-dimensional \\ \addlinespace[1.3ex] 

\textbf{11.} \hyperref[subsec:X_1,1/3]{$X_{1,1/3}$} & Quantum Lefschetz & n.1 & $0$-dimensional \\ \addlinespace[1.3ex] 

\rowcolor[gray]{0.95} 
\textbf{12.} \hyperref[subsec:X_2,17/3]{$X_{2,17/3}$} & Quantum Lefschetz & n.23 & $0$-dimensional \\ \addlinespace[1.3ex] 

\textbf{13.} \hyperref[subsec:X_2,14/3]{$X_{2,14/3}$} & Quantum Lefschetz & n.19 & $0$-dimensional \\ \addlinespace[1.3ex] 

\rowcolor[gray]{0.95} 
\textbf{14.} \hyperref[subsec:X_2,11/3]{$X_{2,11/3}$} & Quantum Lefschetz & n.16 & $0$-dimensional \\ \addlinespace[1.3ex] 

\textbf{15.} \hyperref[subsec:X_2,8/3]{$X_{2,8/3}$} & Quantum Lefschetz & n.13 & $1$-dimensional \\ \addlinespace[1.3ex] 

\rowcolor[gray]{0.95} 
\textbf{16.} \hyperref[subsec:X_2,5/3]{$X_{2,5/3}$} & Quantum Lefschetz & n.6 & $1$-dimensional \\ \addlinespace[1.3ex] 

\textbf{17.} \hyperref[subsec:X_2,2/3]{$X_{2,2/3}$} & Quantum Lefschetz & n.2 & $0$-dimensional \\ \addlinespace[1.3ex] 

\rowcolor[gray]{0.95} 
\textbf{18.} \hyperref[subsec:X_3,5]{$X_{3,5}$} & Toric & n.20 & $3$-dimensional \\ \addlinespace[1.3ex] 

\textbf{19.} \hyperref[subsec:X_3,4]{$X_{3,4}$} & Toric & n.17 & $3$-dimensional \\ \addlinespace[1.3ex] 

\rowcolor[gray]{0.95} 
\textbf{20.} \hyperref[subsec:X_3,3]{$X_{3,3}$} & Quantum Lefschetz & n.14 & $0$-dimensional \\ \addlinespace[1.3ex] 

\textbf{21.} \hyperref[subsec:X_3,2]{$X_{3,2}$} & Quantum Lefschetz & n.8 & $0$-dimensional \\ \addlinespace[1.3ex] 

\rowcolor[gray]{0.95} 
\textbf{22.} \hyperref[subsec:X_3,1]{$X_{3,1}$} & Quantum Lefschetz & n.3 & $0$-dimensional \\ \addlinespace[1.3ex] 

\textbf{23.} \hyperref[subsec:X_4,7/3]{$X_{4,7/3}$} & Quantum Lefschetz & n.11 & $2$-dimensional \\ \addlinespace[1.3ex] 

\rowcolor[gray]{0.95} 
\textbf{24.} \hyperref[subsec:X_4,4/3]{$X_{4,4/3}$} & Quantum Lefschetz & n.5 & $1$-dimensional \\ \addlinespace[1.3ex] 

\textbf{25.} \hyperref[subsec:X_4,1/3]{$X_{4,1/3}$} & Quantum Lefschetz & & no $\QQ$-Gor. toric deg. \\ \addlinespace[1.3ex] 

\rowcolor[gray]{0.95} 
\textbf{26.} \hyperref[subsec:X_5,5/3]{$X_{5,5/3}$} & missing good model & & \\ \addlinespace[1.3ex] 

\textbf{27.} \hyperref[subsec:X_5,2/3]{$X_{5,2/3}$} & missing good model & & no $\QQ$-Gor. toric deg. \\ \addlinespace[1.3ex] 

\rowcolor[gray]{0.95} 
\textbf{28.} \hyperref[subsec:X_6,2]{$X_{6,2}$} & Toric & n.9 & $0$-dimensional \\ \addlinespace[1.3ex] 

\textbf{29.} \hyperref[subsec:X_6,1]{$X_{6,1}$} & missing good model & & no $\QQ$-Gor. toric deg. \\ \addlinespace[1.3ex] 
\end{longtable}

\subsection{$\PP(1,1,3)$}

\begin{family}\label{subsec:P113}
		 {\sc Name:} $\PP(1,1,3)$
\end{family}
	{\sc Model:} toric
	$$\begin{matrix}
	   1 & 1 & 3 & & p\\
	  \end{matrix}$$

	{\sc Extended I-function:} $$I^{S}(\tau,\xi,z)=z+\tau p + \xi \bfone_\al + O(z\inv);$$

	{\sc Quantum period:} $$G_{\PP(1,1,3)}(x;t)=\sum_{l, k \geq 0} \frac{1}{l!^2 (3 l + k)! k!} x^k t^{5 l+2 k};$$

	{\sc Regularization:} $$\widehat{G}_{\PP(1,1,3)}(x;t)=1 + 2 x t^2 + 6 x^2 t^4 + 20 t^5 + 20 x^3 t^6 + 210 x t^7+
	\ldots$$

\noindent\begin{minipage}{10.5cm}
	{\sc Mirror:} (with parameter $a_{[-1,1]}=a$) $$f(a;x,y)=\frac{y^2}{x}+\frac{y}{x^2}+\frac{x}{y}+a\frac{y}{x}.$$
\end{minipage}
\begin{minipage}{2cm}
 \includegraphics[scale=0.6]{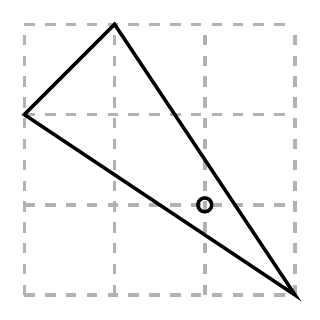}
 \begin{center} Polygon n.26 \end{center}
\end{minipage}

\subsection{Surfaces with Fano index 2}

%%%%%%%%%%%%%%%%%%%%%%%%%%%%%%%%%%%%%%%%%%%%%%%%%%%%%%%%%%%%%%%%%
\begin{family}\label{subsec:B_1,16/3}
		 {\sc Name:} $B_{1,16/3}$
\end{family}
	{\sc Model:} degree $4$ hypersurface in $\PP(1,1,1,3)$ (see \S\ref{subsec:example_3})
		$$\left.\begin{matrix}
			1 & 1 & 1 & 3 & 
		 \end{matrix}\right|
		\begin{matrix}
			& 4 & & p
		\end{matrix}$$

	{\sc Extended twisted I-function:}\begin{align*}
 I^S_\cE(\tau,\xi, z)=z{\bf 1}_0+ \tau p+ q^{\frac 1 3} \xi {\bf 1}_0+ \xi {\bf 1}_\al
 +O(z \inv).
\end{align*}

	{\sc Quantum period:}  $$G_X(x;t)=\exp(-xt)\sum_{l,k\in\NN}\frac{(4l+k)!}{l!^3 (3l+k)! k!}x^kt^{2l+k}$$

	{\sc Regularization:} $$
 \widehat{G}_{B_{1, 16/3}}(x;t)=1+8t^2+6xt^3+168t^4+240xt^5+(4440+90x)t^6+ \ldots.
$$
\noindent\begin{minipage}{10.5cm}

	{\sc Mirror:}  $$f(0;x,y)=\frac{(1+y)^4}{xy^2}+yx+ay.$$
\end{minipage}
\begin{minipage}{2cm}
 \includegraphics[scale=0.6]{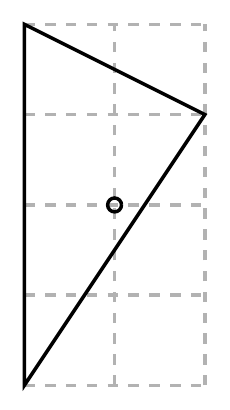}
 \begin{center} Polygon n.21 \end{center}
\end{minipage}

%%%%%%%%%%%%%%%%%%%%%%%%%%%%%%%%%%%%%%%%%%%%%%%%%%%%%%%%%%%%%%%%%%
\begin{family}\label{subsec:B_2,8/3}
		 {\sc Name:} $B_{2,8/3}$
\end{family}
	{\sc Model:} degree $6$ hypersurface in $\PP(1,1,3,3)$
		$$\left.\begin{matrix}
			1 & 1 & 3 & 3 & 
		 \end{matrix}\right|
		\begin{matrix}
			& 6 & & p
		\end{matrix}$$

	{\sc Extended twisted I-function:} 
	$$I^{S}(\xi,\tau,z)=z + \tau p + 2\xi \mathbf{1}_0 + \xi q^{-\frac 1 3} \rme^{- \frac t 3} \mathbf{1}_\al + O(z \inv);$$

	{\sc Specialised quantum period:} $$G_{B_{2,8/3}}(x;t)=\exp(-2 x t) \cdot \sum_{l, k \geq 0} 
	\frac{(6 l + 2 k)!}{l!^2 (3 l + k)!^2 k!} x^k t^{2 l + k};$$

	{\sc Regularization:} $$\begin{aligned}\widehat{G}_{B_{2,8/3}}&(x;t)=1 + (40 + 2 x^2) t^2 + 180 x t^3 + 
	(5544 + 624 x^2 + 6 x^4)t^4 + \\ &+(47520 x + 1840 x^3) t^5 + (972400 + 255420 x^2 + 5040 x^4 + 20 x^6)
	t^6+\ldots \\
	\end{aligned}$$
\noindent\begin{minipage}{10cm}

	{\sc Mirror:} (for $a_{[-1,1]}=a_{[1,-1]}=a$) $$f(a,a;x,y)=\frac{y^2}{x}\left(1+\frac{x}{y}\right)^3+\frac{y}{x^2}
	\left(1+\frac{x}{y}\right)^3+a\left(\frac{x}{y}+\frac{y}{x}\right).$$
\end{minipage}
\begin{minipage}{2cm}
 \includegraphics[scale=0.6]{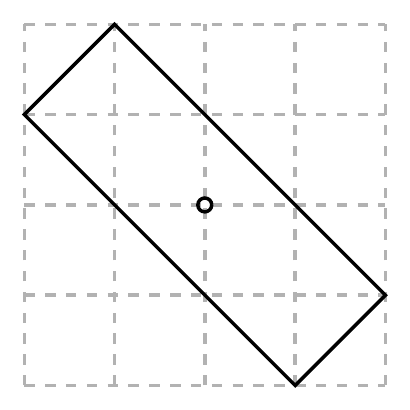}
 \begin{center} Polygon n.12 \end{center}
\end{minipage}

%%%%%%%%%%%%%%%%%%%%%%%%%%%%%%%%%%%%%%%%%%%%%%%%%%%%%%%%%%%%%%%%%%%%%%%%%%%%%%%%%%
%%% K<u> := PolynomialRing(Rationals());
%%% R<t> := PowerSeriesRing(K);
%%% cs := [Factorial(6*a+2*b)/Factorial(a)^2/Factorial(3*a+b)^2/Factorial(b)*u^b*t^(2*a+b) : a in [0..10], b in [0..10]];
%%% I := Exp(-2*u*t)*&+cs;
%%% non_reg := Coefficients(I)[1..10];
%%% [non_reg[k]*Factorial(k-1) : k in [1..#non_reg]];
%%%%%%%%%%%%%%%%%%%%%%%%%%%%%%%%%%%%%%%%%%%%%%%%%%%%%%%%%%%%%%%%%%%%%%%%%%%%%%%%%%
\begin{family}\label{subsec:X_1,22/3}
		 {\sc Name:} $X_{1,22/3}$
\end{family}
	{\sc Model:} blow-up of $\PP(1,1,3)$ in one point, toric (see \S\ref{subsec:example_1})
		$$\begin{matrix}
			1 & 1 & 2 & 0 & & p_1 \\
			0 & 1 & 3 & 1 & & p_2 \\
		 \end{matrix}$$

	{\sc Extended I-function:} $$I^{S}(\tau, \xi,z)=z + \tau p + \xi \bfone_\al + O(z \inv);$$

	{\sc Quantum period:} $$G_{X_{1,22/3}}(x;t)=\sum_{l_1, l_2, k \geq 0} 
	\frac{1}{l_1 ! l_2 ! (l_1 + l_2)! (2l_1 +3 l_2 + k)! k!}x^k t^{4 l_1 + 5 l_2 + 2 k};$$

	{\sc Regularization:} $$\widehat{G}_{X_{1,22/3}}(x;t)=1 + 2 x t^2 + (12 + 6 x^2) t^4 + 20 t^5 + 
	(120 x + 20 x^3) t^6 +\ldots$$
\noindent\begin{minipage}{10cm}

	{\sc Mirror:} (with parameter $a_{[-1,1]}=a$) $$f(a;x,y)=\frac{y^2}{x}+\frac{y}{x^2}+\frac{x}{y}+y+a\frac{y}{x}.$$
\end{minipage}
\begin{minipage}{2cm}
 \includegraphics[scale=0.6]{2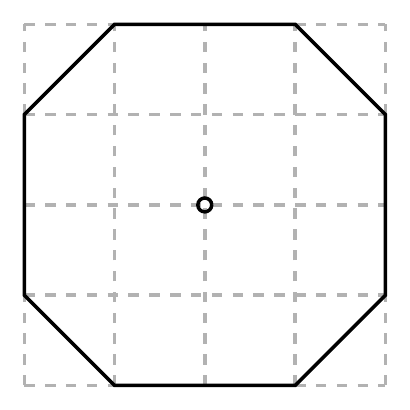}
 \begin{center} Polygon n.25 \end{center}
\end{minipage}

%%%%%%%%%%%%%%%%%%%%%%%%%%%%%%%%%%%%%%%%%%%%%%%%%%%%%%%%%%%%%%%%%%%%%%%%%%%%%%%%%%
\begin{family}\label{subsec:X_1,19/3}
		 {\sc Name:} $X_{1,19/3}$
\end{family}
	{\sc Model:} blow-up of $\PP(1,1,3)$ in two points, toric
		$$\begin{matrix}
			1 & 3 & 3 & 0 & 0 & & p_1 \\
			0 & 2 & 1 & 1 & 0 & & p_2 \\
			1 & 2 & 0 & 0 & 1 & & p_3 \\
		 \end{matrix}$$

	{\sc Extended I-function:} $$I^{S}(\tau, \xi,z)=z + \tau p + \xi \bfone_\al + O(z \inv);$$

	{\sc Quantum period:} $$\begin{aligned}&G_{X_{1,19/3}}(x;t)= \\
	                                  =\sum_{\substack{l_1, l_2, l_3, k \geq 0 \\ l_1 - l_2 - 2 l_3 - k \geq 0 \\ -l_1 + 2 l_2 + 3 l_3 + k \geq 0}}
&\frac{x^k t^{l_1 + 2 l_2 + 2 l_3 + k}}{l_1 ! l_2 ! l_3 ! (l_1 - l_2 - 2 l_3 - k)! (-l_1 + 2 l_2 + 3 l_3 + k)! k!}; \\
	                                 \end{aligned}$$

	{\sc Regularization:} $$\begin{aligned}\widehat{G}_{X_{1,19/3}}&(x;t)=1 + 2 x t^2 + 
 6 t^3 + (24 + 6 x^2) t^4 + (20 + 60 x) t^5 + \\ &+ (90 + 240 x + 
    20 x^3) t^6 + (840 + 210 x + 420 x^2) t^7 + \ldots\end{aligned}$$
\noindent\begin{minipage}{10cm}

	{\sc Mirror:} (with parameter $a_{[-1,1]}=a$) $$f(a;x,y)=\frac{y^2}{x}+\frac{y}{x^2}+\frac{x}{y}+y+\frac{1}{x}+a\frac{y}{x}.$$
\end{minipage}
\begin{minipage}{2cm}
 \includegraphics[scale=0.6]{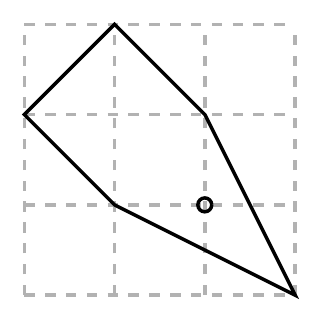}
 \begin{center} Polygon n.24 \end{center}
\end{minipage}

%%%%%%%%%%%%%%%%%%%%%%%%%%%%%%%%%%%%%%%%%%%%%%%%%%%%%%%%%%%%%%%%%%%%%%%%%%%%%%%%%%
%%% Extended quantum period attained:
%%% 1, 0, 2*u, 6, 6*u^2 + 24, 60*u + 20, 20*u^3 + 240*u + 90,
%%% 
%%% K<u> := PolynomialRing(Rationals());
%%% R<t> := PowerSeriesRing(K);
%%% cs := [1/Factorial(a)/Factorial(a-b-2*c-d)/Factorial(-a+2*b+3*c+d)/Factorial(b)/Factorial(c)/Factorial(d)*u^d*t^(a+2*b+2*c+d) : a in [0..10], b in [0..10], c in [0..10], d in [0..10] | a-b-2*c-d ge 0 and -a+2*b+3*c+d ge 0];
%%% I := &+cs;
%%% non_reg := Coefficients(I)[1..10];
%%% [non_reg[k]*Factorial(k-1) : k in [1..#non_reg]];
%%%%%%%%%%%%%%%%%%%%%%%%%%%%%%%%%%%%%%%%%%%%%%%%%%%%%%%%%%%%%%%%%%%%%%%%%%%%%%%%%%
\begin{family}\label{subsec:X_1,16/3}
		 {\sc Name:} $X_{1,16/3}$
\end{family}
	{\sc Model:} blow-up of $\PP(1,1,3)$ in three general points
		$$\begin{matrix}
			1 & 1 & 0 & 0 & 0 & \\
			0 & 0 & 1 & 1 & 3 & \\
		\end{matrix}
		\left|
		\begin{matrix}
		 & 1 & & p_1 \\& 3 & & p_2 \\
		\end{matrix}\right.$$

	{\sc Extended twisted I-function:} 
	$$I^{S}(\xi,\tau,z)=z + \tau p +  (\xi + \rme^{\tau_1} q_1) \mathbf{1}_0 + \xi q_2^{-1/3} \rme^{-\tau_2/3} 
	\mathbf{1}_\al + O(z \inv);$$

	{\sc Quantum period:} 
	$$G_{X_{1,16/3}}(x;t)=\exp(-xt-t) \cdot \sum_{l_1, l_2, k \geq 0} 
	\frac{(l_1 + 3l_2 + k)!}{l_1 !^2 l_2! ^2 (3 l_2 + k)! k!} x^k t^{l_1 + 2 l_2 + k};$$

	{\sc Regularization:} $$\begin{aligned}\widehat{G}_{X_{1,16/3}}&(x;t)=1 + (2 + 2 x) t^2 + 
 18 t^3 + (42 + 24 x + 6 x^2) t^4 + (200 + 180 x) t^5 + \\ &+(1370 + 
    540 x + 180 x^2 + 20 x^3) t^6 + (5460 + 3990 x + 1260 x^2) t^7 + \ldots \end{aligned}$$
\noindent\begin{minipage}{10cm}

	{\sc Mirror:} (with parameter $a_{[0,1]}=a$) $$f(a;x,y)=\frac{(1+y)^3}{xy}+\frac{1}{y}+xy+a y.$$
\end{minipage}
\begin{minipage}{2cm}
 \includegraphics[scale=0.6]{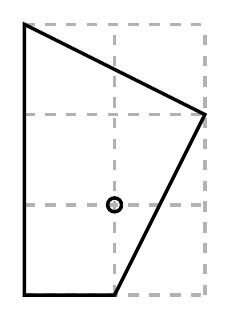}
 \begin{center} Polygon n.22 \end{center}
\end{minipage}

%%%%%%%%%%%%%%%%%%%%%%%%%%%%%%%%%%%%%%%%%%%%%%%%%%%%%%%%%%%%%%%%%%%%%%%%%%%%%%%%%%%%%%%%%%%%%
%%% Extended quantum period attained:
%%% 1, 0, 2*u + 2, 18, 6*u^2 + 24*u + 42, 180*u + 200, 20*u^3 + 180*u^2 + 540*u + 1370, ...
%%%
%%% K<u> := PolynomialRing(Rationals());
%%% R<t> := PowerSeriesRing(K);
%%% cs := [Factorial(a+3*b+c)/Factorial(a)^2/Factorial(b)^2/Factorial(c)/Factorial(3*b+c)*u^c*t^(a+2*b+c) : a in [0..10], b in [0..10], c in [0..10]];
%%% I := Exp(-u*t-t)*&+cs;
%%% non_reg := Coefficients(I)[1..10];
%%% [non_reg[k]*Factorial(k-1) : k in [1..#non_reg]];
%%%%%%%%%%%%%%%%%%%%%%%%%%%%%%%%%%%%%%%%%%%%%%%%%%%%%%%%%%%%%%%%%%%%%%%%%%%%%%%%%%%%%%%%%%%%%
\begin{family}\label{subsec:X_1,13/3}
		 {\sc Name:} $X_{1,13/3}$
\end{family}
	{\sc Model:} blow-up of $\PP(1,1,3)$ in four general points
		$$\begin{matrix}
1 & 1 & 3 & 1 & 0  & \\
0 & 0 & 0 & 1 & 1 & \\
\end{matrix}\left|
\begin{matrix}
 & 4 & & p_1 \\ & 1 & & p_2 \\
\end{matrix}\right.
$$

	{\sc Non-extended twisted I-function:} 
	$$I(\tau,z)=z + \tau p + \rme^{\tau_2} q_2 \mathbf{1}_0 + O(z \inv);$$

	{\sc Specialised quantum period:} 
	$$G_{X_{1,13/3}}(t)=\exp(-t) \cdot \sum_{l_1, l_2 \geq 0} \frac{(4 l_1 + l_2)!}{(l_1 !)^2 (3 l_2) ! (l_1 + l_2)! l_2 !} t^{2l_1 + l_2};$$

	{\sc Regularization:} $$\widehat{G}_{X_{1,13/3}}(t)=1 + 8 t^2 + 36 t^3 + 216 t^4 + 1700 t^5 + 10700 t^6 + 81060 t^7 + \ldots$$
\noindent\begin{minipage}{10cm}

	{\sc Mirror:} (for $a_{[-1,1]}=0$)$$f(0;x,y)=\left(\frac{y}{x^2}+
	\frac{y^2}{x}\right)\left(1+\frac{x}{y}\right)^2+\left(1+\frac{x}{y}\right)-1$$
\end{minipage}
\begin{minipage}{2cm}
 \includegraphics[scale=0.6]{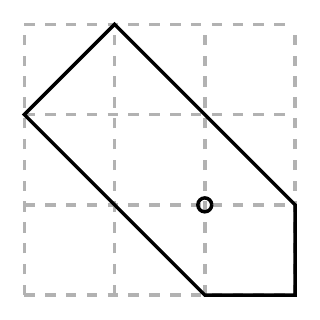}
 \begin{center} Polygon n.18 \end{center}
\end{minipage}

%%%%%%%%%%%%%%%%%%%%%%%%%%%%%%%%%%%%%%%%%%%%%%%%%%%%%%%%%%%%%%%%%%%%%%%%%%%%%%%%%%%%%%%%%%%%%
%%% Quantum period attained:
%%% 1, 0, 8, 36, 216, 1700, 10700, ...
%%% 
%%% K<u> := PolynomialRing(Rationals());
%%% R<t> := PowerSeriesRing(K);
%%% cs := [Factorial(4*a+b)/Factorial(a)^2/Factorial(3*a)/Factorial(b)/Factorial(a+b)*t^(2*a+b) : a in [0..10], b in [0..10]];
%%% I := Exp(-t)*&+cs;
%%% non_reg := Coefficients(I)[1..10];
%%% [non_reg[k]*Factorial(k-1) : k in [1..#non_reg]];
%%%%%%%%%%%%%%%%%%%%%%%%%%%%%%%%%%%%%%%%%%%%%%%%%%%%%%%%%%%%%%%%%%%%%%%%%%%%%%%%%%%%%%%%%%%%%
\begin{family}\label{subsec:X_1,10/3}
		 {\sc Name:} $X_{1,10/3}$
\end{family}
	{\sc Model:}  blow-up of $\PP(1,1,3)$ in five general points
		$$\begin{matrix}
1 & 1 & 2 & 1 & 0 & 0 & \\
0 & 0 & 1 & 2 & 1 & 1 & \\
\end{matrix}\left|
\begin{matrix}
& 2 & 2 & & p_1 \\ & 2 & 2 & & p_2 \\ 
\end{matrix}
\right.
$$

	{\sc Non-extended twisted I-function:} 
	$$I(\tau,z)=z + \tau p + 2(q_1  + q_2) \mathbf{1}_0 + \left( e^{\frac{-\tau_1+2\tau_2}{3}}q_1^{-\frac{1}{3}} q_2^{\frac{2}{3}} 
+ e^{\frac{2\tau_1-\tau_2}{3}}q_1^{\frac{2}{3}} q_2^{-\frac{1}{3}} \right)\mathbf{1}_\al + O(z \inv);$$

	{\sc Specialised quantum period:} 
	
	$$G_{X_{1,10/3}}(t)=\exp(-4t) \cdot \sum_{l_1, l_2 \geq 0} \frac{(2 l_1 + 2 l_2)!^2}{(l_1 !)^2 (2 l_1 + l_2)! (l_1 + 2 l_2)! (l_2!)^2} t^{l_1 + l_2};$$

	{\sc Regularization:} $$\widehat{G}_{X_{1,10/3}}(t)=1 + 28 t^2 + 180 t^3 + 2604 t^4 + 29680 t^5 + 384700 t^6 + 
 4944240 t^7 +\ldots$$

\noindent\begin{minipage}{10cm}
	{\sc Mirror:} (for $a_{[0,1]}=2$)$$f(2;x,y)=\frac{(1+x)^2(1+y)^2}{xy}+\frac{(1+y)^2}{x}+2 y-4,$$
\end{minipage}
\begin{minipage}{2cm}
 \includegraphics[scale=0.6]{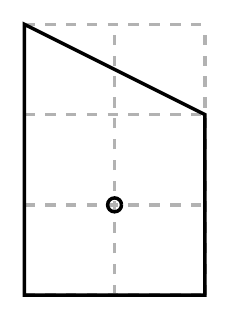}
 \begin{center} Polygon n.15 \end{center}
\end{minipage}

%%%%%%%%%%%%%%%%%%%%%%%%%%%%%%%%%%%%%%%%%%%%%%%%%%%%%%%%%%%%%%%%%%%%%%%%%%%%%%%%%%%%%%%%
\begin{family}\label{subsec:X_1,7/3}
		 {\sc Name:} $X_{1,7/3}$
\end{family}

	{\sc Model:} complete intersection of type $(2,2,2,2)$ in the weighted Grassmannian $\mathrm{wGr}(2,5)$ with weights $(\frac{1}{2}, \frac{1}{2}, \frac{1}{2}, \frac{3}{2}, \frac{3}{2})$.

\medskip
{\sc Specialised quantum period:} \begin{align*}
G(t) &= \exp(-8t) \times \\
&\times \sum_{l_1, l_2 \in \NN} A_{l_1, l_2}(t) \left( 1 + \frac{l_1 - l_2}{2} \left( -3 H_{l_1} + 3 H_{l_2} - 2 H_{2 l_1 + l_2} + 2 H_{2 l_2 + l_1} \right) \right),
\end{align*}
where 
\begin{equation*}
H_l := \begin{cases}
0 &\quad \text{if } l=0; \\
\sum_{i = 1}^l \frac{1}{i} &\quad \text{if } l > 0.
\end{cases}
\end{equation*}
and
\begin{equation*}
A_{l_1, l_2}(t) := (-t)^{l_1+l_2} \frac{(2l_1 + 2 l_2)!^4}{l_1!^3 l_2!^3 (2l_1 + l_2)!^2 (l_1 + 2 l_2)!^2}.
\end{equation*}

{\sc Regularization:} $\widehat{G}_{X_{1,7/3}}(t) = 1 + 112t^2 + 1650 t^3 + 48048 t^4 + \cdots$

\noindent\begin{minipage}{9.5cm}
	{\sc Mirror:} (for $a_{[0,1]}=3$)$$f(3;x,y)=3y+\frac{x}{y^2}(1+y)^3+\frac{1}{xy^2}(1+y)^4+\frac{7}{y}+\frac{2}{y^2},$$
\end{minipage}
\begin{minipage}{2cm}
 \includegraphics[scale=0.6]{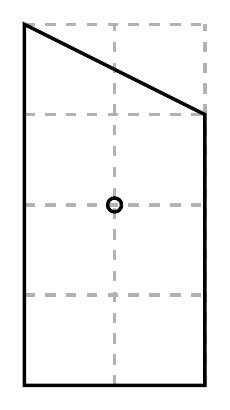}
 \begin{center} Polygon n.10 \end{center}
\end{minipage}

%%%%%%%%%%%%%%%%%%%%%%%%%%%%%%%%%%%%%%%%%%%%%%%%%%%%%%%%%%%%%%%%%%%%%%%%%%%%%%%%%%%%%%%%%
\begin{family}\label{subsec:X_1,4/3}
		 {\sc Name:} $X_{1,4/3}$
\end{family}
	{\sc Model:} surface $X_{4,4}\subset\PP(1,1,2,2,3)$,
	$$
	\begin{matrix}
	 1 & 1 & 2 & 2 & 3 &  
	\end{matrix}\left|
	\begin{matrix}
	 & 4 & 4 & & p
	\end{matrix}\right.
	$$

	{\sc Non-extended twisted I-function:} 
	$$I(\tau,z)=z + \tau p + 24 \rme^{\tau} q \mathbf{1}_0 + 4 \rme^{\frac \tau 3} q^\frac{1}{3} \mathbf{1}_{\al} + O(z \inv)$$

	{\sc Specialised quantum period:} 
	$$G_{X_{1,4/3}}(t)=\exp(-24t) \cdot \sum_{l \geq 0} \frac{(4 l)!^2}{(l !)^2 (2 l)!^2 (3 l)!} t^{l}$$
	
	{\sc Regularization:}
	$$\begin{aligned}\widehat{G}_{X_{1,4/3}}(t)=1 + 1384 t^2 + 89808 t^3& + 9686952 t^4 + 985441920 t^5 + \\
 &+106460790640 t^6 + 11728528875840 t^7+\ldots\end{aligned}$$
	
	\noindent\begin{minipage}{9cm}

	{\sc Mirror:} (with parameter $a_{[-1,1]}=8$)
	$$f(8;x,y)=\left(1+\frac{1}{x}+\frac{1}{y}\right)^4\left(\frac{y}{x^2}+\frac{y^2}{x}\right)-24,$$
\end{minipage}
\begin{minipage}{2cm}
 \includegraphics[scale=0.6]{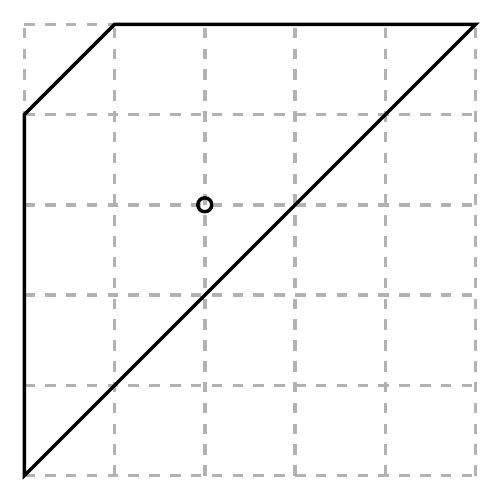}
 \begin{center} Polygon n.4 \end{center}
\end{minipage}

% 1-parameter family of maximally mutable polynomials with parameter in the singularity [-1,1]
% 4/x^2 + (-14 + 2 (8 + a))/x + 10 x + 1/(x^2 y^3) + 4/(x^2 y^2) + 
% 5/( x y^2) + 10/y + 6/(x^2 y) + (8 + a)/(
% x y) + (-14 + 2 (8 + a)) y + y/x^2 + (a y)/x + (8 + a) x y + 
% 5 x^2 y + 4 y^2 + y^2/x + 6 x y^2 + 4 x^2 y^2 + x^3 y^2

%%%%%%%%%%%%%%%%%%%%%%%%%%%%%%%%%%%%%%%%%%%%%%%%%%%%%%%%%%%%%%%%%%%%%%%%%%%%%%%%%%%%%%%%%%
\begin{family}\label{subsec:X_1,1/3}
		 {\sc Name:} $X_{1,1/3}$
\end{family}
	{\sc Model:} hypersurface $X_{10}\subset\PP(1,2,3,5)$
	$$
	\begin{matrix}
	 1 & 2 & 3 & 5 &  
	\end{matrix}\left|
	\begin{matrix}
	 & 10 & & p
	\end{matrix}\right.
	$$

	{\sc Non-extended twisted I-function:} 
	$$I(\tau,z)=z + \tau P + 2520 \rme^{\tau} q \mathbf{1}_0 + 14 \rme^{\frac \tau 3} q^\frac{1}{3} \mathbf{1}_{\al} + O(z \inv)$$

	{\sc Specialised quantum period:}
	$$G_{X_{1,1/3}}(t)=\exp(-2520t) \sum_{l\ge 0} \frac{10l!}{l! 2l! 3l! 5l!} t^{l}$$
	
	{\sc Regularization:}
	$$\begin{aligned}&\widehat{G}_{X_{1,1/3}}(t)=1 + 32448360 t^2 + 515050578000 t^3 + 10896129436991400 t^4 + \\
 &+ 237912591939390587520 t^5 + 5409193242794544150150000 t^6+\ldots\end{aligned}$$
	
	\noindent\begin{minipage}{9cm}

	{\sc Mirror:} (with parameter $a_{[4,3]}=360$)
	$$f(360;x,y)=\frac{y^5}{x^3}\left(1+x+\frac{1}{y}\right)^{10}-2520.$$
\end{minipage}
\begin{minipage}{2cm}
 \includegraphics[scale=0.35]{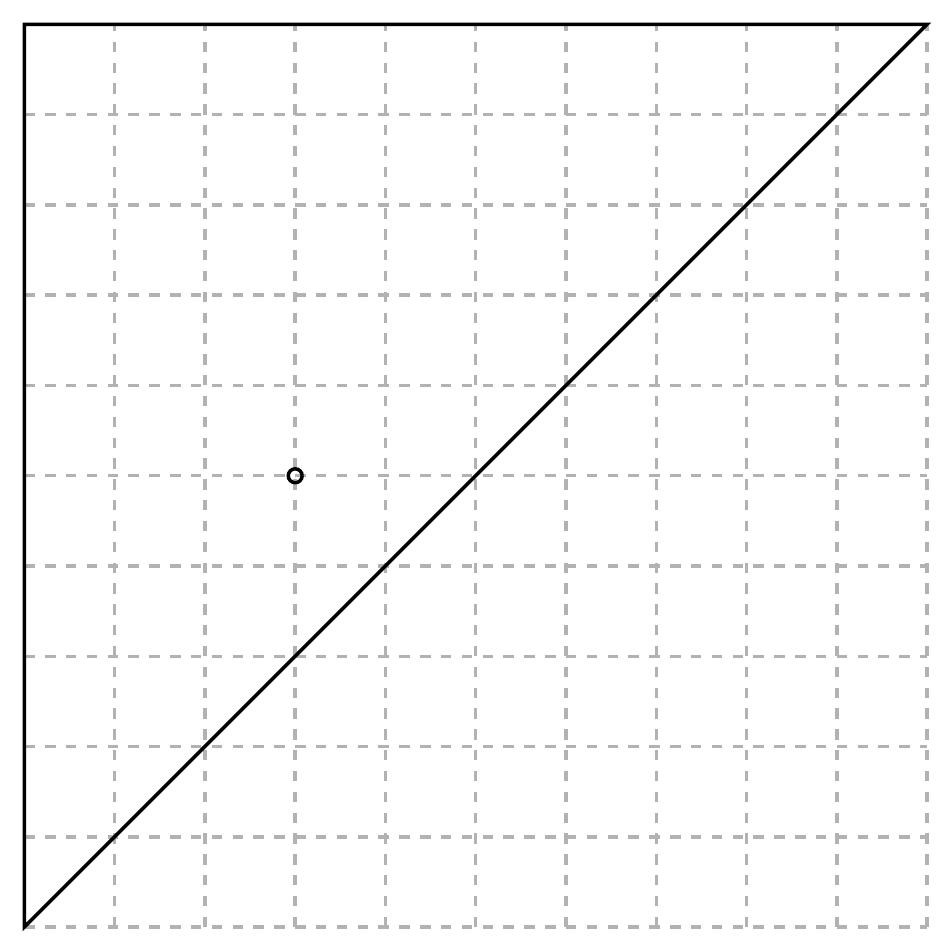}
 \begin{center} Polygon n.1 \end{center}
\end{minipage}

\subsection{Surfaces with Fano index 1 and 2 singular points}

%%%%%%%%%%%%%%%%%%%%%%%%%%%%%%%%%%%%%%%%%%%%%%%%%%%%%%%%%%%%%%%%%%%%%%%%%%%%%%%%%%%%%%%%%%%%%%%%%%%5
\begin{family}\label{subsec:X_2,17/3}
		 {\sc Name:} $X_{2,17/3}$
\end{family}
	{\sc Model:} hypersurface in toric threefold
$$
\begin{matrix}
 1 & 1 & 2 & 3 & 0 & \\
-1 & 0 & 1 & 3 & 1 & \\
\end{matrix}\left|
\begin{matrix} 
 & 4 & p_1 \\ & 2 & p_2 \\
\end{matrix}\right.
$$

	{\sc Non-extended twisted I-function:} 
	$$I(\tau,z)=z + \tau p + O(z \inv)$$

	{\sc Specialised quantum period:} 
	$$ G_{X_{2,17/3}}(t)=\sum_{l_1 \geq l_2 \geq 0} \frac{(4 l_1 + 2l_1)!}{(l_1-l_2)!l_1 ! (2l_1 +  l_2)! (3 l_1 + 3 l_2)! l_2! } t^{3 l_1 + 2 l_2};$$
	
	{\sc Regularization:}
$$
 \widehat{G}_{X_{2,17/3}}(t)=1 + 12 t^3 + 20 t^5 + 420 t^6 + 1680 t^8 + 18480 t^9+\ldots
$$
	
	\noindent\begin{minipage}{10cm}

	{\sc Mirror:} (for $a_{[1,0]}=a_{[0,1]}=0$)
 $$
  f(0,0;x,y)=\frac{(1+x)^2}{y} + \frac{y^2}{x} + xy.
 $$ 
\end{minipage}
\begin{minipage}{2cm}
 \includegraphics[scale=0.6]{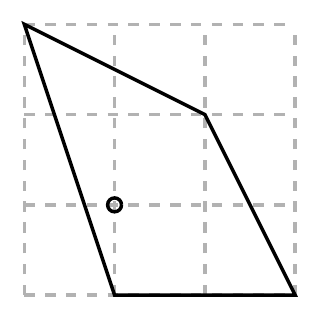}
 \begin{center} Polygon n.23 \end{center}
\end{minipage}

%%%%%%%%%%%%%%%%%%%%%%%%%%%%%%%%%%%%%%%%%%%%%%%%%%%%%%%%%%%%%%%%%%%%%%%%%%%%%%%%%%%%%%%%
\newpage 

\begin{family}\label{subsec:X_2,14/3}
		 {\sc Name:} $X_{2,14/3}$
\end{family}
	{\sc Model:} hypersurface in toric threefold
$$
\begin{matrix}
 1 & 0 & 3 & 1 & 2 & \\
 0 & 1 & 3 & 1 & 1 & \\
\end{matrix}\left|
\begin{matrix}
 & 4 & p_1 \\ & 4 & p_2 \\
\end{matrix}
\right.
$$

	{\sc Non-extended twisted I-function:} 
	$$I(\tau,z)=z + \tau p + 2\rme^{\frac{-\tau_1+2\tau_2}{3}}q_1^{-\frac{1}{3}}q_2^{\frac{2}{3}}{\bf 1}_{\alpha} + O(z \inv);$$

	{\sc Specialised quantum period:}
	$$G_{X_{2,14/3}}(t)=\sum_{l_1,l_2\geq 0} \frac{(4l_1+4l_2)!}{l_1! l_2! (3l_1+3l_2)! (l_1+l_2)! (2l_1+l_2)!} t^{3l_1+2l_2}$$
	
	{\sc Regularization:}
		$$\widehat{G}_{X_{2,14/3}}(t)=1 + 8 t^2 + 12 t^3 + 168 t^4 + 560 t^5 + 4820 t^6 + 23100 t^7+\ldots$$
	
	\noindent\begin{minipage}{10cm}

	{\sc Mirror:} (with parameters $a_{[1,0]}=0$ and $a_{[0,1]}=2$) 
	$$f(2,0;x,y)=\frac{(1+x)^2}{y}+\frac{y(1+x)}{x}+\frac{y^2}{x}.$$
\end{minipage}
\begin{minipage}{2cm}
 \includegraphics[scale=0.7]{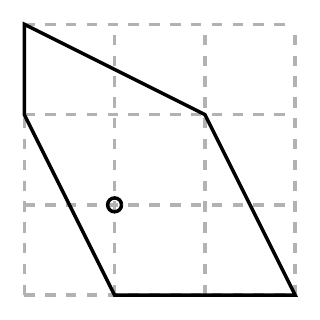}
 
 \begin{center}Polygon n.19\end{center}
\end{minipage}

%%%%%%%%%%%%%%%%%%%%%%%%%%%%%%%%%%%%%%%%%%%%%%%%%%%%%%%%%%%%%%%%%%%%%%%%%%%%%%%%%%%%%%%%
\begin{family}\label{subsec:X_2,11/3}
		 {\sc Name:} $X_{2,11/3}$
\end{family}
	{\sc Model:} hypersurface in toric threefold
$$
\begin{matrix}
 1 & 0 & 0 & 1 & 0 & 1 &\\
 0 & 1 & 0  & 0 & 1 & 1 & \\
 0 & 0 & 1  & 1 & 1 & 4 & \\
\end{matrix}\left|
\begin{matrix}
 & 2 & p_1 \\ & 2 & p_2 \\ & 4 & p_3 \\
\end{matrix}\right.
$$

	{\sc Non-extended twisted I-function:} 
	$$I(\tau,z)=z+\tau p+2\rme^{\tau_1+\tau_2}(q_1+q_2){\bf 1}_0+\text{(orbifold classes)}+O(z \inv)$$

	{\sc Specialised quantum period:}
	$$G_{X_{2,11/3}}(t)=\exp(-4t)\sum_{l_1, l_2, l_3 \geq 0}\frac{(2l_1+2l_2+4l_3)!}{l_1! l_2! l_3! (l_1+l_3)! (l_2+l_3)! (l_1+l_2+4l_3)!}t^{l_1+l_2+3l_3}$$
	
	{\sc Regularization:}
		$$\widehat{G}_{X_{2,11/3}}(t)=1 + 20 t^2 + 102 t^3 + 1236 t^4 + 11440 t^5 + 121610 t^6 + 1278060 t^7 + \ldots$$
	
	\noindent\begin{minipage}{10cm}

	{\sc Mirror:}  (with parameters $a_{[1,0]}=a_{[0,1]}=3$)$$
  f(3,3;x,y)=\frac{(1+x+y)^2}{x} + \frac{(1+x+y)^2}{y} + xy - 4.
 $$ 
\end{minipage}
\begin{minipage}{2cm}
 \includegraphics[scale=0.7]{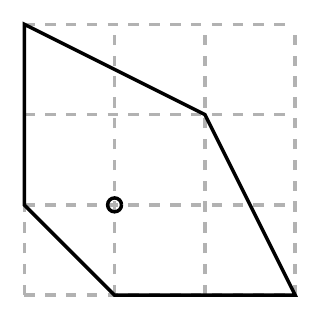}
 
 \begin{center}Polygon n.16\end{center}
\end{minipage}

%%%%%%%%%%%%%%%%%%%%%%%%%%%%%%%%%%%%%%%%%%%%%%%%%%%%%%%%%%%%%%%%%%%%%%%%%%%%%%%%%%%%%%%%%%%
\newpage

\begin{family}\label{subsec:X_2,8/3}
		 {\sc Name:} $X_{2,8/3}$
\end{family}
	{\sc Model:} hypersurface in toric threefold
$$
\begin{matrix}
 1 & 1 & 1 & 1 & 0 & \\
 0 & 0 & 1 & 3 & 1 & \\
\end{matrix}\left|
\begin{matrix}
 & 3 & p_1 \\ & 3 & p_2 \\
\end{matrix}\right.
$$

	{\sc Extended twisted I-function:} 
$$I^{S}(\xi,\tau,z)=z + \tau p + (6q_1 + \xi) \rme^{\tau_1} \mathbf{1}_0 + (3q_1+\xi)q_2^{-\frac 1 3} \rme^{\tau_1 - \frac{\tau_2}3} \mathbf{1}_{\al} + O(z \inv)
$$

	{\sc Specialised quantum period:}
$$G_{X_{2,8/3}}(x;t)=\exp(-(6+x)t)\sum_{\substack{l_1,~l_2,~k\in\ZZ \\ l_1,~l_2,~k \geq0}}
\frac{(3l_1+3l_2+k)!}{(l_1!)^2l_2!(l_1+l_2)!(l_1+3l_2+k)!k!}x^kt^{l_1+2l_2+k}
$$	
	{\sc Regularization:}
$$\begin{aligned}\widehat{G}_{X_{2,8/3}}&(x;t)=1 + (12x + 56)t^2 + (6x^2 + 144x + 546)t^3 + \\ 
&+ (396x^2 + 4176x + 11184)t^4 + (360x^3 + 11220x^2 + 84240x + 189060)t^5 + \\
&+ (90x^4 + 20640x^3 + 339480x^2 + 1900800x + 3560870)t^6 + \ldots \end{aligned},$$
	
	\noindent\begin{minipage}{10cm}

	{\sc Mirror:} (with parameter $a_{[1,0]}=a_{[0,1]}=a+3$) $$
  f(a,a;x,y)=\frac{(1+x+y)^3}{xy} + xy + a(x+y) -6.
 $$ 
\end{minipage}
\begin{minipage}{2cm}
 \includegraphics[scale=0.7]{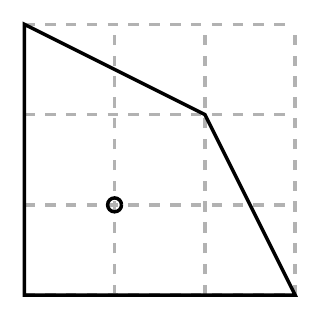}
 
 \begin{center}Polygon n.13\end{center}
\end{minipage}

%%%%%%%%%%%%%%%%%%%%%%%%%%%%%%%%%%%%%%%%%%%%%%%%%%%%%%%%%%%%%%%%%%%%%%%%%%%%%%%%%%%%%%%%%%%%%%%%%%
\begin{family}\label{subsec:X_2,5/3}
		 {\sc Name:} $X_{2,5/3}$
\end{family}
	{\sc Model:} hypersurface in toric threefold
$$
\begin{matrix}
 1 & 1 & 2 & 1 & 0 & \\
 0 & 1 & 3 & 3 & 1 & \\
 \end{matrix}\left|
\begin{matrix}
 & 4 & p_1 \\ & 6 & p_2 \\
\end{matrix}\right.
$$

	{\sc Extended twisted I-function:} 
$$I(\xi,\tau,z)=z+\left(12e^{\tau_1}q_1+2\xi \right){\bf 1}_0+\left(3e^{\tau_1-\frac{1}{3}}q_1q_2^{-\frac{1}{3}}+
\xi q_2^{-\frac{1}{3}}\right){\bf 1}_{\al}+O(z \inv)$$

	{\sc Specialised quantum period:} 
$$
 G_{X_{2,5/3}}(x;t)=\exp(-(12+x)t)\sum_{l_1, l_2, k \geq 0}\frac{(4l_1+6l_2+2k)! ~~~~ x^k t^{l_1+2l_2+2k}}{l_1! (l_1+l_2)!  (2l_1+3l_2+k)! (l_1+3l_2+k)!l_2!  k!}
$$
	{\sc Regularization:}
$$
\begin{aligned}
\widehat{G}_{X_{2,5/3}}&(x;t)=1+(2x^2 + 72x + 316)t^2+(192x^2 + 2628x + 9156)t^3+ \\ 
    &+(6x^4 + 816x^3 + 22440x^2 + 180576x + 461484)t^4 +\\
    &+(1920x^4 + 110800x^3 + 1690800x^2 + 10130400x + 21425520)t^5+\ldots
\end{aligned}
$$
	
	\noindent\begin{minipage}{9cm}

	{\sc Mirror:} 
	$1$-parameter family of maximally mutable \\ polynomials with coefficients of the interior points:
	$$\begin{aligned}
	 a_{[1,1]} &=12+a; \\
	 a_{[0,1]} &=-13+a; \\
	 a_{[-1,1]}&=6+a; \\
	 a_{[-1,0]}&=8+2a; \\
	 a_{[-1,-1]} &= 4+a; \\
	\end{aligned}$$
\end{minipage}
\begin{minipage}{2cm}
 \includegraphics[scale=0.6]{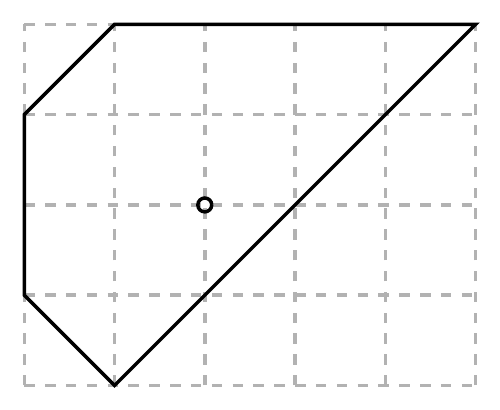}
 
 \begin{center}Polygon n.6\end{center}
\end{minipage}

%%%%%%%%%%%%%%%%%%%%%%%%%%%%%%%%%%%%%%%%%%%%%%%%%%%%%%%%%%%%%%%%%%%%%%%%%%%%%%%%%%%%%%%%%%%%%%
\begin{family}\label{subsec:X_2,2/3}
		 {\sc Name:} $X_{2,2/3}$
\end{family}
	{\sc Model:} complete intersection $S_{4,6}$ in $\PP(1,2,2,3,3)$
$$
\begin{matrix}
 1 & 2 & 2 & 3 & 3 & \\
 \end{matrix}\left|
\begin{matrix}
 & 4 & 6 & & p \\
\end{matrix}\right.
$$

	{\sc Non-extended twisted I-function:} 
$$
I(\tau,z)=z \mathbf{1}_0 + \tau p + 120 \rme^{\tau} q \mathbf{1}_0 + 6 \rme^{\frac \tau 3} q^{\frac 1 3} \mathbf{1}_{\alpha} + O(z \inv)
$$

	{\sc Specialised quantum period:} 
$$
 G_{X_{2,2/3}}(t)=\exp(-120t)\sum_{l\geq 0}\frac{(4l!)^2}{l! (2l!)^2 (3l!)^2}t^{l}
$$

	{\sc Regularization:}
$$
 \begin{aligned}
 \widehat{G}_{X_{2,2/3}}&(t)=1 + 50280 t^2 + 25096080 t^3 + 18204817320 t^4 + \\ &+13228445013120 t^5 + 
 10057163200940400 t^6  \ldots
 \end{aligned}
$$

	\noindent\begin{minipage}{9cm}

	{\sc Mirror:} (with parameters $a_{[2,-1]}=a_{[-2,1]}=24$)
 $$
  f(24,24;x,y)=\frac{(1+x)^6(1+y)^4}{x^3y^2}.
 $$
\end{minipage}
\begin{minipage}{2cm}
 \includegraphics[scale=0.5]{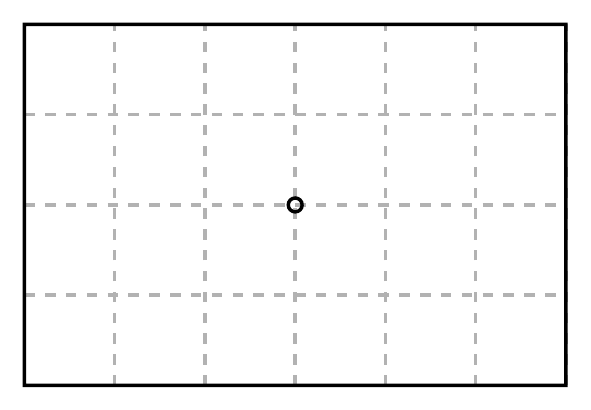}
 
 \begin{center}Polygon n.2\end{center}
\end{minipage}

%%%%%%%%%%%%%%%%%%%%%%%%%%%%%%%%%%%%%%%%%%%%%%%%%%%%%%%%%%%%%%%%%%%%%%%%%%%%%%%%%%

\subsection{Surfaces with Fano index 1 and 3 singular points}

\begin{family}\label{subsec:X_3,5}
		 {\sc Name:} $X_{3,5}$
\end{family}
	{\sc Model:} toric
	$$
\begin{matrix}
 1 & 0 & 0 & 1 & 0 \\
 0 & 1 & 0 & 2 & 3 \\
 0 & 0 & 1 & 1 & 3 \\
\end{matrix}
\begin{matrix}
  & & p_1 \\
  & & p_2 \\
  & & p_3 \\
\end{matrix}
$$

	{\sc Extended twisted I-function:} 
$$
I(\xi,\tau,z)^{{\rm S}}=z+\tau p + \xi_1q_3^{-\frac{1}{3}}{\bf 1}_{\alpha_1}+\xi_2q_2^{-\frac{1}{3}}q_3^{\frac{1}{3}}{\bf 1}_{\al_2}+\xi_3q_1^{-\frac{1}{3}}q_2^{-\frac{1}{3}}{\bf 1}_{\al_3}+O(z \inv)
$$

	{\sc Quantum period:} 
$$
\begin{aligned}
 &G_{X_{3,5}}(x_1,x_2,x_3;t)=\sum_{\substack{l_1,l_2,l_3 \geq 0 \\ k_1,k_2,k_3 \geq 0}}
 \frac{1}{l_1!l_2!l_3!k_1!k_2!k_3!} \times \\ 
 &\times\frac{x^{k_1}x_2^{k_2}x_3^{k_3}t^{2l_1+6l_2+5l_3+2k_1+4k_2+3k_3}}{(l_1+2l_2+l_3+k_2+k_3)!(3l_2+3l_3+k_1+2k_2+k_3)!}&
 \end{aligned}
$$

{\sc Regularization:}
$$
 \begin{aligned}
  \widehat{G}_{X_{3,5}}(x_1,x_2,x_3;t)&=1 + (2 + 2 x_1) t^2 + 
 6 x_3 t^3 + \\ & +(6 + 12 x_2 + 24 x_1 +6 x_1^2) t^4 + (20 + 60 x_3 + 
    60 x_3 x_1) t^5 + \ldots \\
   \end{aligned}
$$

	\noindent\begin{minipage}{9cm}

	{\sc Mirror:}  (with parameters $a_{[0,1]}=a_1,~a_{[-1,1]}=a_2$ and $a_{[-1,0]}=a_3$)
 $$
  f(a_1,a_2,a_3;x,y)= \frac{1}{y} + \frac{1}{x y} + \frac{y}{x^2} + x y + \frac{y^2}{x} + a_1 y +a_2\frac{y}{x} + a_3\frac{1}{x}
 $$
\end{minipage}
\begin{minipage}{2cm}
 \includegraphics[scale=0.8]{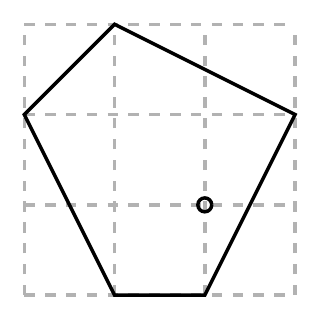}
 
 \begin{center}Polygon n.20\end{center}
\end{minipage}

%%%%%%%%%%%%%%%%%%%%%%%%%%%%%%%%%%%%%%%%%%%%%%%%%%%%%%%%%%%%%%%%%%%%%%%%%%%%%%%%%%%%%%%%
\begin{family}\label{subsec:X_3,4}
		 {\sc Name:} $X_{3,4}$
\end{family}
	{\sc Model:} toric
$$
 \begin{matrix}
  1 & 0 & 0 & 0 & 1 & -1 \\
  0 & 1 & 0 & 0 & -1 & 1 \\
  0 & 0 & 1 & 0 & 2 & 1 \\
  0 & 0 & 0 & 1 & 1 & 2 \\
 \end{matrix}
\begin{matrix}
  & & p_1 \\
  & & p_2 \\
  & & p_3 \\
  & & p_4
\end{matrix}
$$

	{\sc Extended twisted I-function:} 
$$
I(\xi,\tau,z)^S=z+\tau p+\xi_1q_1^{-\frac{1}{3}}q_4^{-\frac{1}{3}}{\bf 1}_{\al_1}+\xi_2q_3^{-\frac{1}{3}}q_4^{\frac{1}{3}}{\bf 1}_{\al_2}+
\xi_3q_1^{-\frac{1}{3}}q_3^{-\frac{1}{3}}{\bf 1}_{\al_3}+O(z \inv)
$$

	{\sc  Quantum period:}
$$
\begin{aligned}
 G_{X_{3,4}}(x_1,x_2,x_3;t)&=\sum_{\substack{l_1,l_2,l_3,l_4,k_1,k_2,k_3 \geq 0 \\ l_1-l_2+2l_3+l_4+k_2+k_3 \geq 0 \\ -l_1+l_2+l_3+2l_4+k_1+k_2 \geq 0}}
 \frac{1}{l_1!l_2!l_3!l_4!k_1!k_2!k_3!} \times \\ 
 &\times\frac{x^{k_1}x_2^{k_2}x_3^{k_3}t^{l_1+l_2+4l_3+4l_4+2k_1+3k_2+2k_3}}{(l_1-l_2+2l_3+l_4+k_2+k_3)!(-l_1+l_2+l_3+2l_4+k_1+k_2)!}&
 \end{aligned}
$$

{\sc Regularization:}
$$
 \begin{aligned}
 &\widehat{G}_{X_{3,4}}(x_1,x_2,x_3;t)=1 + (2 + 2 x_1 + 2 x_3)t^2 + (6 x_1 + 6 x_2 + 6 x_3)t^3 + \\
 & + (30 + 24 x_1 + 6 x_1^2 + 24 x_2 + 24 x_3 + 24 x_1 x_3 + 
    6 x_3^2) t^4 + \\ 
 &+ (160 + 60 x_1 + 60 x_1^2 + 120 x_2 + 60 x_1 x_2 + 60 x_3 + 
    120 x_1 x_3 + 60 x_2 x_3 + 60 x_3^2) t^5 + \ldots \\
    \end{aligned}
$$

	\noindent\begin{minipage}{9cm}

	{\sc Mirror:} (with parameters $a_{[0,1]}=a_1,~a_{[-1,1]}=a_2$ and $a_{[-1,0]}=a_3$)
 $$
  f(a_1,a_2a_3;x,y)= x+ \frac{1}{y} + \frac{1}{x y} + \frac{y}{x^2} + x y + \frac{y^2}{x} + a_1 y+ a_2\frac{y}{x}+ a_3\frac{1}{x}
 $$
\end{minipage}
\begin{minipage}{2cm}
 \includegraphics[scale=0.8]{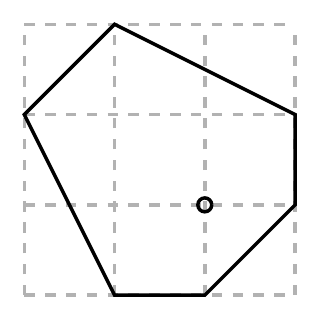}
 
 \begin{center}Polygon n.17\end{center}
\end{minipage}

%%%%%%%%%%%%%%%%%%%%%%%%%%%%%%%%%%%%%%%%%%%%%%%%%%%%%%%%%%%%%%%%%%%%%%%%%%%%%%%%%%%%%%%%%%%%%%%
\begin{family}\label{subsec:X_3,3}
		 {\sc Name:} $X_{3,3}$
\end{family}
	{\sc Model:} hypersurface in toric threefold
$$\begin{matrix}
   1 & 1 & 1 & 0 & 0 & \\
   0 & 0 & 1 & 1 & 3 & \\
  \end{matrix}\left|
 \begin{matrix}
  & 2 & & p_1 \\ & 3 & & p_2 \\
 \end{matrix}\right.
$$

	{\sc Non-extended twisted I-function:} 
$$I(\tau,z)= z + \tau p + 2 \rme^{\tau_1} q_1 \mathbf{1}_0 + \rme^{\frac{-\tau_1 + \tau_2}{3}} q_1^{- \frac{\tau_1}{3}} q_2^{\frac{\tau_2}{3}} \mathbf{1}_{\al} + O(z \inv).
$$

	{\sc Specialised quantum period:} 
$$
\begin{aligned}
 G_{X_{3,3}}(t) = \exp(-2t) \cdot \sum_{l_1,l_2 \geq 0} \frac{(2l_1+3l_2)! }{(l_1 !)^2 (l_1 + l_2)! l_2! (3 l_2)!} t^{l_1 + 2 l_2}
 \end{aligned}
$$

{\sc Regularization:}
$$
 \widehat{G}_{X_{3,3}}(t) = 1 + 4 t^2 + 48 t^3 + 420 t^4 + 2740 t^5 + 22360 t^6 + 209370 t^7 + 
 1856820 t^8+\ldots
$$

	\noindent\begin{minipage}{10cm}

	{\sc Mirror:} (with parameter $a_{[0,1]}=a_{[-1,0]}=0$ and $a_{[-1,1]}=1$)
 $$
  f(0,1,0;x,y)=\frac{x}{y}\left( 1+\frac{1}{x}+y\right)^2+\frac{y}{x}\left(1+\frac{1}{x}+y\right)-2.
 $$
\end{minipage}
\begin{minipage}{2cm}
 \includegraphics[scale=0.8]{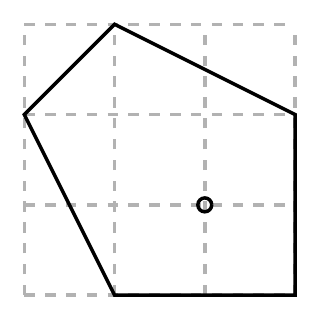}
 
 \begin{center}Polygon n.14\end{center}
\end{minipage}

%%%%%%%%%%%%%%%%%%%%%%%%%%%%%%%%%%%%%%%%%%%%%%%%%%%%%%%%%%%%%%%%%%%%%%%%%%%%%%%%%%%%%%%%%%%%%%%

\begin{family}\label{subsec:X_3,2}
		 {\sc Name:} $X_{3,2}$
\end{family}
	{\sc Model:} hypersurface in toric threefold
$$\begin{matrix}
   1 & 3 & 2 & 0 & -1 & \\
   0 & 0 & 1 & 1 & 1 & \\
  \end{matrix}\left|
  \begin{matrix}
   & 4 & p_1 \\ & 2 & p_2 \\
  \end{matrix}\right.
$$

	{\sc Non-extended twisted I-function:} 
$$
I(\tau,z)=z + \tau p + 2 \rme^{\tau_2} q_2 \mathbf{1}_0 + 2 \rme^{\frac{\tau_1}{3}} q_1^\frac{\tau_1}{3} \mathbf{1}_{\al} + O(z \inv).
$$

	{\sc Specialised quantum period:}
$$
 G_{X_{3,2}}(t) = \exp(-2t)\sum_{l_2 \geq l_1 \geq 0} \frac{(4l_1+2l_2)!}{l_1! (3l_1)! (2l_1+l_2)! l_2! (l_2-l_1)!}t^{l_1+l_2}
$$

{\sc Regularization:} $$ \widehat{G}_{X_{3,2}}(t) = 1 + 42 t^2 + 600 t^3 + 9870 t^4 + 206800 t^5 + 3919500 t^6 + 80106600 t^7 +\ldots$$

	\noindent\begin{minipage}{10cm}

	{\sc Mirror:} coefficients in the interior points,
	$$
		\begin{aligned}
		 a_{[1,1]} & =1; \\
		 a_{[0,1]} & = 3; \\
		 a_{[-1,1]} & = 4; \\
		 a_{[-1,0]} & = 3; \\
		 a_{[-1,-1]} & = 1. \\
		\end{aligned}
	$$
\end{minipage}
\begin{minipage}{2cm}
 \includegraphics[scale=0.6]{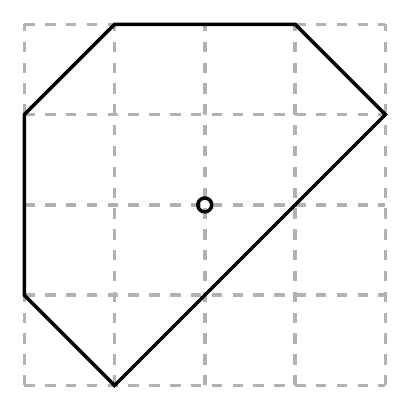}
 
 \begin{center}Polygon n.8\end{center}
\end{minipage}

%%%%%%%%%%%%%%%%%%%%%%%%%%%%%%%%%%%%%%%%%%%%%%%%%%%%%%%%%%%%%%%%%%%%%%%%%%%%%%%%%%%%%%%%%%%%%%%
\begin{family}\label{subsec:X_3,1}
		 {\sc Name:} $X_{3,1}$
\end{family}
	{\sc Model:} hypersurface in toric threefold
$$ \begin{matrix}
  1 & 0 & 0 & 1 & 1 & 2 & \\
  0 & 1 & 0 & 1 & 2 & 1 & \\
  0 & 0 & 1 & 2 & 1 & 1 & \\
 \end{matrix}\left|
 \begin{matrix}
  & 4 & & p_1 \\ & 4 & & p_2 \\ & 4 & & p_3 \\
 \end{matrix}
 \right.
$$

	{\sc Non-extended twisted I-function:} 
$$
I(\tau,z)=z + \tau p+12\rme^{\tau_1+\tau_2+\tau_3}\left(q_1+q_2+q_3\right){\bf 1}_0+(\text{orbifold classes})+O(z \inv)
$$

	{\sc Specialised quantum period:}
$$
 G_{X_{3,1}}(t) = \exp(-36t)\sum_{l_1, l_2, l_3 \geq 0}\frac{(4l_1+4l_2+4l_3)!~~~~~t^{l_1+l_2+l_3}}{l_1!l_2!l_3!(l_1+l_2+2l_3)!(l_1+2l_2+l_3)!(2l_1+l_2+l_3)!}
$$

{\sc Regularization:}
$$
 \widehat{G}_{X_{3,1}}(t) = 1 + 3324 t^2 + 356652 t^3 + 61331148 t^4 + 10136532960 t^5 + 
 1770572214660 t^6+\ldots
$$

	\noindent\begin{minipage}{9cm}

	{\sc Mirror:} (with parameters $a_{[2,1]}=a_{[1,1]}=a_{[-1,-2]}=9$)
 $$
  f(9,9,9;x,y)=\left(1+x+\frac{1}{y}\right)^4\left(\frac{y}{x}+\frac{y}{x^2}+\frac{y^2}{x}\right)-36.
 $$
\end{minipage}
\begin{minipage}{2cm}
 \includegraphics[scale=0.6]{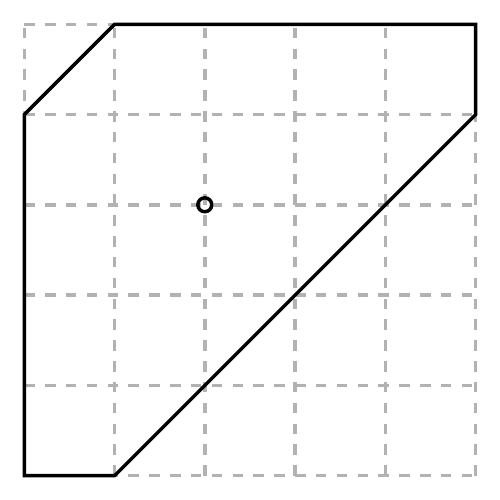}
 
 \begin{center}Polygon n.3\end{center}
\end{minipage}

%%%%%%%%%%%%%%%%%%%%%%%%%%%%%%%%%%%%%%%%%%%%%%%%%%%%%%%%%%%%%%%%%%%%%%%%%%%%%%%%%%%%%%%%%%%%%%%%%%%%

\subsection{Surfaces with Fano index 1 and 4 singular points}
\begin{family}\label{subsec:X_4,7/3}
		 {\sc Name:} $X_{4,7/3}$
\end{family}
	{\sc Model:} hypersurface in toric threefold
$$
 \begin{matrix}
  1 & 0 & 3 & 3 & 2 & \\
  0 & 1 & 3 & 3 & 1 & \\
 \end{matrix}
 \left|\begin{matrix}
  & 6 & & p_1 \\ & 6 & & p_2 \\
 \end{matrix}
 \right.
$$

	{\sc Extended twisted I-function:} 
$$
I^S(\xi,\tau,z)=z + \tau p + 2\xi_2{\bf 1}_0 + (\text{orbifold classes})+ O(z \inv).
$$

	{\sc Specialised quantum period:} 
$$
 G_{X_{4,7/3}}(x;t) = {\rm exp}(-2x_2t)\sum_{l_1, l_2, k_1, k_2 \geq 0} \frac{(6l_1+6l_2+4k_1+2k_2)!~~~~x_1^{k_1}x_2^{k_2} t^{3l_1+2l_2+2k_1+k_2}}{l_1 ! l_2! k_1! k_2!(3 l_1+3l_2+2k_1+k_2)!^2 (2l_1+l_2+k_1)!} 
$$

{\sc Regularization:}
$$
\begin{aligned}
    &\widehat{G}_{X_{4,7/3}}(x;t) = 1 +(12x_1 + 2x_2^2 + 40)t^2 + (48x_1x_2 + 180x_2 + 60)t^3+ \\ &+ (420x_1^2 + 168x_1x_2^2 + 
    3024x_1 + 6x_2^4 + 624x_2^2 + 360x_2 + 5544)t^4 + \\
    &+(3360x_1^2x_2 + 480x_1x_2^3 + 25200x_1x_2 + 5040x_1 + 1840x_2^3 + 1560x_2^2 + 47520x_2 + 18480)t^5+\ldots \\
\end{aligned}
$$

	\noindent\begin{minipage}{9cm}

	{\sc Mirror:} coefficients of the interior points,
	$$
		\begin{aligned}
		 a_{[0,1]} & =a_{[-1,1]}=a+3; \\
		 a_{[1,0]} & =a_{[-1,0]}=b.
		\end{aligned}
	$$
	
\end{minipage}
\begin{minipage}{2cm}
 \includegraphics[scale=0.8]{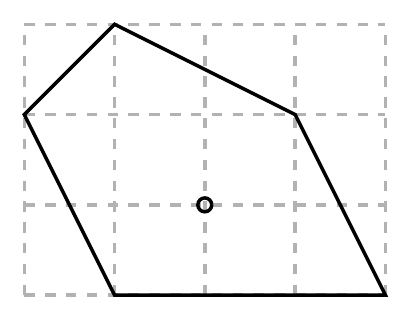}
 
 \begin{center}Polygon n.11\end{center}
\end{minipage}
 
%%%%%%%%%%%%%%%%%%%%%%%%%%%%%%%%%%%%%%%%%%%%%%%%%%%%%%%%%%%%%%%%%%%%%%%%%%%%%%%%%%%%%%%%%%%%%%%%%%%
 \begin{family}\label{subsec:X_4,4/3}
		 {\sc Name:} $X_{4,4/3}$
\end{family}
	{\sc Model:} complete intersection in toric fourfold
$$
 \begin{matrix}
 1 & 0 & 2 & 2 & 1 & 1 & \\
 0 & 1 & 1 & 1 & 2 & 2 & \\
 \end{matrix}
 \left|\begin{matrix}
  & 2 & 4 & & p_1 \\ & 4 & 2 & & p_2 \\
 \end{matrix}
 \right.
$$

	{\sc Extended twisted I-function:} 
$$
\begin{aligned}
I^{S}&(\xi,\tau,z)=z  + \tau p + \left(12 e^{\tau_1}q_1 + 12 e^{\tau_2}q_2 + 4\xi\right) \mathbf{1}_0 \\
&+ \left(3\rme^{\frac{-\tau_1 + 2 \tau_2}{3}} q_1^{- \frac 1 3} q_2^{\frac 2 3} + 
3\rme^{\frac{2 \tau_1 - \tau_2}{3}} q_1^\frac{2}{3} q_2^{- \frac 1 3} +xq_1^{-\frac 1 3}q_2^{-\frac 1 3} \right) \mathbf{1}_{\al} + O ( z \inv).
\end{aligned}
$$

	{\sc Specialised quantum period:} 
$$
 G_{X_{4,4/3}}(x;t) = \exp\left(-24t - 4 x t\right) \sum_{l_1, l_2, k \geq 0} \frac{(2l_1+4l_2+2k)!(4l_1+2l_2+2k)!}{l_1!l_2!(2l_1+l_2+k)!^2(l_1+2l_2+k)!^2k!}x^kt^{l_1+l_2+k}
$$

{\sc Regularization:}
$$
\begin{aligned}
    &\widehat{G}_{X_{4,4/3}}(x;t) = 1 + (20x^2 + 288x + 1064)t^2 + (96x^3 + 2352x^2 + 19224x + 52368)t^3 + \\
    &+(1188x^4 + 37440x^3 + 446496x^2 + 2384640x + 4807080)t^4+\\
    &+(10560x^5 + 431040x^4 + 7055680x^3 + 57885600x^2 + 237988800x +
        392223360)t^6+\ldots \\
\end{aligned}
$$

	\noindent\begin{minipage}{9cm}

	{\sc Mirror:} coefficients of the interior points
	$$
	\begin{aligned}
	a_{[1,1]} = a_{[1,-1]} = a_{[-1,-1]} = a_{[-1,1]} & = a+8; \\
	a_{[1,0]} = a_{[0,-1]} = a_{[-1,0]} = a_{[0,1]} & = 2a+14. \\
	\end{aligned}
	$$
	
\end{minipage}
\begin{minipage}{2cm}
 \includegraphics[scale=0.6]{5.pdf}
 
 \begin{center}Polygon n.5\end{center}
\end{minipage}

%%%%%%%%%%%%%%%%%%%%%%%%%%%%%%%%%%%%%%%%%%%%%%%%%%%%%%%%%%%%%%%%%%%%%%%%%%%%%%%%%%%%%%%%%%%%%%%%%%%%%%%%%%
 \begin{family}\label{subsec:X_4,1/3}
		 {\sc Name:} $X_{4,1/3}$
\end{family}

	{\sc Model:} complete intersection in toric fourfold
$$
 \begin{matrix}
 2 & 2 & 3 & 3 & 3 & 
 \end{matrix}
 \left|\begin{matrix}
  & 6 & 6 & & p
 \end{matrix}
 \right.
$$

	{\sc Non-extended twisted I-function:} 
$$
 I(\tau,z) = z  + \tau p + 600 \rme^{\tau}q \mathbf{1}_0 + 9 \rme^{\frac \tau 3} q^{\frac 1 3} \mathbf{1}_{\al} + O ( z \inv).
$$

	{\sc Specialised quantum period:} 
$$
  G_{X_{4,1/3}}(t) = \exp(-600t)\sum_{l\geq 0}\frac{6l!^2}{2l!^2 3l!^3}t^l.
$$

	{\sc Mirror:} this family does not admit a toric $\QQ$-Gorenstein degeneration.

%%%%%%%%%%%%%%%%%%%%%%%%%%%%%%%%%%%%%%%%%%%%%%%%%%%%%%%%%%%%%%%%%%%%%%%%%%%%%%%%%%%%%%%%%%%%%%%%%%%%%%%%%%

% 	\marginpar{Is it the correct model? Hopefully not.... it is not working}
% $$
%  \begin{matrix}
%   1 & 0 & 0 & 0 & -1 & 2 & 1 & \\
%   0 & 1 & 0 & 0 & -1 & 1 & 2 & \\
%   0 & 0 & 1 & 0 & 1 & 1 & 1 & \\
%   0 & 0 & 0 & 1 & 2 & -1 & -1 & \\
%  \end{matrix}
%  \left|\begin{matrix}
%   & 2  & & P_1 \\
%   & 2  & & P_2 \\
%   & 3 & & P_3 \\
%   & 0  & & P_4 \\
%  \end{matrix}
%  \right.
% $$

%%%%%%%%%%%%%%%%%%%%%%%%%%%%%%%%%%%%%%%%%%%%%%%%%%%%%%%%%%%%%%%%%%%%%%%%%%%%%%%%%%%%%%%%%%%%%%%%%%%%%%%%%%

%%%%%%%%%%%%%%%%%%%%%%%%%%%%%%%%%%%%%%%%%%%%%%%%%%%%%%%%%%%%%%%%%%%%%%%%%%%%%%%%%%%%%%%%%%%%%%%%%%%%%%%%%%

\subsection{Surfaces with Fano index 1 and 6 singular points}
\setcounter{family}{27}
 \begin{family}\label{subsec:X_6,2}
		 {\sc Name:} $X_{6,2}$
\end{family}

	{\sc Model:} toric
$$
\begin{matrix}
 1 & 0 & 0 & 0 & 1 & -1 \\
 0 & 1 & 0 & 0 & 1 & 0 \\
 0 & 0 & 1 & 0 & 0 & 1 \\
 0 & 0 & 0 & 1 & -1 & 1 \\
\end{matrix}
 \begin{matrix}
  & & p_1 \\
  & & p_2 \\
  & & p_3 \\
  & & p_4 \\
 \end{matrix}
$$

	{\sc Non-extended twisted I-function:} 
$$
 I(\tau,z) = z +\tau p+ O ( z \inv).
$$

	{\sc Specialised quantum period:}
$$
 G_{X_{6,2}}(t)=\sum_{\substack{l_1,l_2,l_3,l_4 \geq 0 \\ l_1+l_2-l_4 \geq 0 \\ -l_1+l_3+l_4 \geq 0}}
 \frac{1}{l_1! l_2! l_3! l_4! (l_1+l_2-l_4)! (-l_1+l_3+l_4)!}t^{l_1+2l_2+2l_3+l_4}
$$

{\sc Regularization:}
$$
\begin{aligned}
 \widehat{G}_{X_{6,2}}(t)=1 + 6 t^2 + & 12 t^3 + 90 t^4 + 360 t^5 + \\ &+2040 t^6 + 10080 t^7 + 
 54810 t^8 + 290640 t^9 + \ldots \\
\end{aligned}
$$

	\noindent\begin{minipage}{9cm}

	{\sc Mirror:} 
	$$ f(0,\ldots,0;x,y)=\frac{x}{y^2} + \frac{1}{x y} + \frac{x^2}{y} + \frac{y}{x^2} + \frac{x}{y} + \frac{y^2}{x}
	$$
	
\end{minipage}
\begin{minipage}{2cm}
 \includegraphics[scale=0.6]{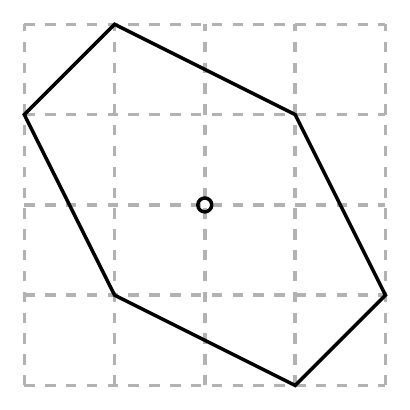}
 
 \begin{center}Polygon n.9\end{center}
\end{minipage}

% \textcolor{red}{da estendere}
%%%%%%%%%%%%%%%%%%%%%%%%%%%%%%%%%%%%%%%%%%%%%%%%%%%%%%%%%%%%%%%%%%%%%%%%%%%%%%%%%%%%%%%%%%%%%%%%%%%%%%%%%%

% Bibliography
	
\bibliography{biblio}

\end{document}